\documentclass{article}
\usepackage[a4paper,top=3cm,bottom=3cm,left=2.1cm,right=2.1cm,marginparwidth=1.75cm]{geometry}

\usepackage{graphicx} 

\usepackage{bm}
\usepackage{amsmath, amsfonts, amssymb, amsthm}
\usepackage{mathrsfs, dsfont}
\usepackage[dvipsnames]{xcolor}
\usepackage{graphicx}
\usepackage{verbatim}
\usepackage{float}
\usepackage{pifont} 
\usepackage{comment}
\usepackage{ulem}
\usepackage{bigints}

\usepackage[colorlinks=true, allcolors=blue]{hyperref}
\usepackage{xparse} 
\usepackage{hyperref}
\usepackage{tasks}
\settasks{style=itemize}
\usepackage{parskip}
\usepackage{todonotes}
\usepackage{xcolor} 
\usepackage{subcaption}
\usepackage{makecell}

\usetikzlibrary{decorations.pathmorphing}
\usetikzlibrary {patterns,patterns.meta}
\usetikzlibrary{decorations.pathreplacing}

\usepackage[round]{natbib}
\newtheorem{theorem}{Theorem}[section]
\newtheorem{lemma}[theorem]{Lemma}
\newtheorem{definition}[theorem]{Definition}
\newtheorem{assumption}[theorem]{Assumption}
\newtheorem{proposition}[theorem]{Proposition}

\newtheorem{corollary}[theorem]{Corollary}

\theoremstyle{remark}
\newtheorem{remark}[theorem]{Remark}
\newtheorem{example}[theorem]{Example}

\numberwithin{equation}{section}

\newenvironment{sqremark}{\begin{remark}}{\hfill \tiny $\blacksquare$ \end{remark}}
\newenvironment{sqexample}{\begin{example}}{\hfill \tiny $\blacksquare$ \end{example}}

\newcommand{\D}{\mathbf{D}}

\newcommand{\R}{\mathbb{R}}
\newcommand{\N}{\mathbb{N}}

\newcommand{\E}{\mathbb{E}}

\def\P{\mathbb{P}} 
\newcommand{\F}{\mathcal{F}}

\renewcommand{\Re}{\, \mathfrak{Re}}

\newcommand{\indic}[1]{\mathds{1}_{\left\{ #1 \right\}}}

\newcommand{\alphabet}[1][d]{A_{#1}}
\newcommand{\TA}[1][d]{T(\R^{#1})}
\newcommand{\eTA}[1][d]{T((\R^{#1}))}

\newcommand{\emptyword}{{\color{NavyBlue}{\mathbf{\varnothing}}}}
\newcommand{\word}[1]{{\color{NavyBlue}{\mathbf{#1}}}}
\newcommand{\proj}[1]{|_{\word{#1}}}

\newcommand{\bp}{\bm{p}}

\newcommand{\bsigma}{\bm{\sigma}}

\newcommand{\bell}{\bm{\ell}}

\newcommand{\shuprod}{\mathrel{\sqcup \mkern -3mu \sqcup}}
\newcommand{\shupow}[1]{^{\shuprod #1}}

\newcommand{\conpow}[1]{^{\otimes #1}}

\NewDocumentCommand{\sighat}{O{t} O{W}}{\widehat{\mathbb{#2}}_{#1}}
\NewDocumentCommand{\sigtilde}{O{t} O{W}}{\widetilde{\mathbb{#2}}_{#1}}
\NewDocumentCommand{\sig}{O{t} O{W}}{\sighat[#1][#2]}
\newcommand{\sigW}[1][t]{\widehat{\mathbb{W}}_{#1}}
\newcommand{\sigB}[1][t]{\widehat{\mathbb{B}}_{#1}}
\newcommand{\sigX}[1][t]{{\mathbb{X}}_{#1}}
\newcommand{\sigY}[1][t]{{\mathbb{Y}}_{#1}}
\newcommand{\sigXhat}[1][t]{{\widehat{\mathbb{X}}}_{#1}}

\newcommand{\bracket}[2]{\left \langle #1, #2 \right \rangle}
\NewDocumentCommand{\bracketsig}{O{t} O{W} m}{\bracket{#3}{\sig[#1][#2]}}
\newcommand{\bracketsigW}[2][t]{\bracket{#2}{\widehat{\mathbb{W}}_{#1}}}
\newcommand{\bracketsigX}[2][t]{\bracket{#2}{{\mathbb{X}}_{#1}}}

\NewDocumentCommand{\bracketsigtrunc}{O{M} O{t} O{W} m}{\bracket{#4}{\sig[#2][#3]^{\leq #1}}}
\NewDocumentCommand{\bracketsigtilde}{O{t} O{W} m}{\bracket{#3}{\sigtilde[#1][#2]}}

\usepackage{mathtools}
\usepackage{shuffle}
\usepackage{bbm}
\usepackage{authblk}
\mathtoolsset{showonlyrefs}
\definecolor{word_color}{HTML}{B56246}
\definecolor{alert_color}{HTML}{B56246}
\definecolor{title_color}{HTML}{75818c}
\definecolor{main_color}{HTML}{5b6c64}
\definecolor{alert_color}{HTML}{B56246}
\definecolor{my_blue}{HTML}{4699B5}
\definecolor{my_red}{HTML}{B54662}
\definecolor{my_purple}{HTML}{9A46B5}

\usepackage{enumitem}

\title{Malliavin calculus for signatures with applications to finance}

\author[1]{Eduardo Abi Jaber\thanks{eduardo.abi-jaber@polytechnique.edu. The first author is grateful for the financial support from the Chaires FiME-FDD, Financial Risks at Ecole Polytechnique.}}
\author[1]{Clément Rey\thanks{clement.rey@polytechnique.edu.}}
\author[1, 2]{Dimitri Sotnikov\thanks{dmitrii.sotnikov@polytechnique.edu. The third author is grateful for the financial support provided by Engie Global Markets.}}
\affil[1]{Ecole Polytechnique, CMAP}
\affil[2]{Engie Global Markets}

\begin{document}

\maketitle

\begin{abstract}
Malliavin calculus is a powerful and general framework for the analysis of square-integrable random variables, but it often suffers from a lack of tractability and explicit representations. To address this limitation, we focus on a subclass of random variables given by finite linear combinations of time-extended Brownian motion signatures. The class remains rich due to the universal approximation properties of signatures. Leveraging the algebraic structure of signatures, we first derive explicit formulas for the Malliavin derivative of signatures of continuous It\^o processes. As a consequence, we obtain closed-form expressions for the Clark--Ocone representation, the Ornstein--Uhlenbeck semigroup and its generator, as well as the integration-by-parts formula within the class of Brownian signature variables. These results provide purely algebraic formulations of the classical operators of Malliavin calculus. As an application, we compute Greeks for general path-dependent options under signature volatility models, and numerically compare different choices of Malliavin weights.
\end{abstract}

\begin{description}
\item[Mathematics Subject Classification (2010):] 60H07, 60H30 
\item[Keywords:] Path signatures, Malliavin calculus, Integration by parts, Greeks.
\end{description}

\section{Introduction}
A central problem in probability is to establish differentiability of the mapping
$
\theta \mapsto \E\big[f(G^\theta)\big],
$
where $G^\theta$ is a random variable depending on a parameter $\theta \in \R$, 
and to derive tractable representations of the associated parameter sensitivity
$$
\frac{\partial}{\partial \theta}\E\big[f(G^\theta)\big].
$$

The 
\cite{malliavin1978stochastic} calculus\footnote{We refer to \cite*{nualart2006malliavin, alos2021malliavin} for a comprehensive treatment of Malliavin calculus and its applications.},  which serves as a stochastic analogue of the calculus of variations,  provides a powerful tool to address this problem whenever $G^\theta$ is an $\mathcal{F}^W_T$-measurable random variable with $W$  a Brownian motion.   Under suitable regularity and non-degeneracy assumptions, the Malliavin integration-by-parts formula yields the representation
\begin{align}\label{eq:ippintro}
\frac{\partial}{\partial \theta}\E\big[f(G^\theta)\big]
=
\E\big[f(G^\theta)\pi_T\big],
\end{align}
where $\pi_T$ denotes the associated Malliavin weight. An important application of such formulas is the derivation of explicit expressions for the density of  random variables. In particular, \cite{bismut1981martingales}  introduced a probabilistic proof of H\"ormander's theorem based on integration by parts on the Wiener space, establishing the smoothness and lower bounds of densities for a broad class of diffusion processes \citep{kusuoka1987applicationsII,Kohatsu2003, nualart2006malliavin,Bally2006,NourdinViens2009}.  Furthermore, the representation \eqref{eq:ippintro} is particularly attractive for Monte Carlo methods, as it avoids differentiating the function $f$. 
Integration-by-parts techniques have proven useful in stochastic optimal control \citep{Gobet2005}, gradient estimation in machine learning \citep{oden2025backpropogation}, and the improvement of learning procedures \citep{pidstrigach2025conditioning}.
In mathematical finance, the computation of sensitivities (the so-called Greeks) via Malliavin integration by parts was first developed within the Black--Scholes framework \citep*{Fournie1999}. This approach was later extended to stochastic volatility models \citep{ElKhatib2009}, including hybrid \citep{yilmaz2018computation}, jump-diffusion \citep{Khatib2003, Davis2006, Petrou2008}, and non-Markovian settings \citep*{al2023computation}, and to more complex derivatives such as barrier, lookback \citep{Gobet2003}, and American options \citep*{Bally2003}.

However, in general, the Malliavin weight $\pi_T$ is either not explicit or difficult to compute numerically, except in very specific configurations where additional structure is available.
This naturally leads to the following question:
\begin{center}
\textit{Can one identify a structural and generic class of random variables $G^\theta$ for which the Malliavin weight admits an explicit and numerically tractable form?}
\end{center}

A natural solution comes from the theory of path signatures, introduced by \citet{chen1957integration} as sequences of iterated integrals, which provide a systematic way to represent functionals of continuous paths. 
More precisely, the signature $\widehat{\mathbb{W}}$ of the time-augmented Brownian motion $\widehat{W}_t = (t, W_t)$, $t \ge 0$, is defined as the sequence
\[
    \sig =
    \left(
        1,
        \begin{pmatrix}
            t \\ W_t
        \end{pmatrix},
        \begin{pmatrix}
            \frac{t^2}{2!} & \int_0^t s\, dW_s \\
            \int_0^t W_s\, ds & \frac{W_t^2}{2!}
        \end{pmatrix},
        \ldots
    \right),
\]
and serves as a universal approximator: it naturally appears in stochastic Taylor expansions of Markovian  \citep*{ benarous1989flots, kloeden1992stochastic} and non-Markovian systems  \citep*{fliess1983concept,  AbiJaber2024a}, and satisfies a Stone–Weierstrass–type theorem for functionals of Brownian motion. This highlights the role of signatures as analogues of polynomials on path space and underlines their foundational importance in the theory of rough paths initiated by Terry Lyons (see, e.g., \citealp*{lyons2007differential}).

The interplay between Malliavin calculus and the theory of rough paths and signatures  {has received considerable attention.} For instance, \citet[Chapter 20]{friz_victoir_2010} apply Malliavin calculus to establish the existence of smooth densities for rough differential equations (RDEs). Further developments include the study of Malliavin differentiability of RDE solutions by \citet{INAHAMA20141566}, the analysis of Malliavin differentiability and existence of smooth densities for rough SDEs by \citet*{bugini2024malliavin}, and the computation of the Wiener–Itô chaos expansion of Gaussian-process signatures by \citet{cass2024wiener}. Moreover, Malliavin calculus was used to establish the densities of the truncated path signatures themselves. See \citet{kusuoka1987applications} for the case of Brownian motion signature and \citet*{baudoin2020density} for the fractional Brownian motion case. Regarding the computation of Greeks, \citet{teichmann2006calculating} proposed an approach relying on the cubature on Wiener space introduced by \citet{Lyons04cubature}. {Most existing approaches start from an SDE, use a stochastic Taylor expansion to bring out the signature, and then, under conditions such as Hörmander’s, apply Malliavin calculus. A comprehensive framework for applying Malliavin calculus directly to the Brownian signature $\sigW$  is not yet fully developed. We instead isolate the Malliavin/signature component, aiming at a unified and generic treatment that exploits the algebraic structure of signatures and extends beyond the SDE setting to non-Markovian dynamics.}

Going back to our question, and motivated by the universal approximation properties of path signatures, we restrict our attention to random variables that are linear in the signature of Brownian motion, of the form
$$ G^\theta =\bracketsigW[T]{\bell^{\theta}}, $$
where the coefficients $\ell^\theta$ are deterministic. A first crucial observation is that this representation decouples the stochastic noise from the parameter dependence. As a consequence, the sensitivity analysis reduces to first understanding the Malliavin calculus of the generic object $\widehat{\mathbb{W}}$ and then applying integration-by-parts formulas to the class of linear functionals. Compared to the related literature, a key feature of our approach is that all fundamental Malliavin operations on the signature are systematically encoded algebraically as linear operators on the (extended) tensor algebra. This allows us to exploit the rich algebraic structure of signatures to derive explicit expressions for the Malliavin weights, while the computability of $\sigW[T]$ enables the design of numerically tractable implementations.

 To implement this strategy, we first develop a probabilistic perspective on the path signature of stochastic processes, which allows us to derive results for the Malliavin derivative of the signature of general Itô processes (Theorem~\ref{thm:malliavindersig}) echoing the calculus of variations for rough differential equations \citep{marie2015sensitivities}, but using a purely probabilistic approach. In Theorem~\ref{thm:strato_mal_der}, we further identify a class of processes for which the Stratonovich integral is Malliavin differentiable, and we derive a formula for its Malliavin derivative.

 By combining this viewpoint with the powerful  algebraic properties of the signature, we then 
show that several fundamental operations from Malliavin calculus admit explicit and tractable representations when applied to Brownian signatures.  In particular, we derive explicit expressions for the Skorokhod integral (Proposition~\ref{Prop:skorokhod}), the Ornstein--Uhlenbeck semigroup (Theorem~\ref{thm:ou_semigroup}), chaos expansions (Proposition~\ref{Prop:Chaos}), and ultimately to an integration-by-parts formula (Theorem~\ref{thm:ibp_sig}) for functionals of the form  $f(G^\theta)$, where the Malliavin weights are determined explicitly and belong to a class of random variables stable under integration by parts. More precisely, this stable  class turns out to be the class of rational functions of $\sigW[T]$, that is, ratios of two linear functionals of the signature.  

As an application, we use our results to derive and compute model sensitivities (Greeks) of path-dependent payoffs in signature volatility models in Section~\ref{sect:application}. 
Signature volatility models form a universal and flexible class of volatility models as shown by  \citet*{arribas2020sig, cuchiero2023spvix, abijaber2024signature}, capable of representing and approximating a wide range of (possibly non-Markovian) dynamics while preserving computational tractability for pricing and hedging derivatives. Within this setting, our  Malliavin integration-by-parts formula  yields explicit and reusable expressions for Greeks of a large class of path-dependent payoffs. 
In particular, any payoff of the form
$
f\!\left(\bracketsigX[T]{\ell}\right),
$
where $\sigX[T]$ denotes the signature of the time-augmented log-price process $(t, \log S_t)$, can be treated within a unified framework. 
The resulting sensitivity formula depends only on the deterministic coefficient $\ell$, without requiring additional model-specific derivations, which makes the approach particularly  robust and attractive for practical implementation. For completeness, we perform numerical comparisons of different choices of Malliavin weights for European and Asian options.

The results presented in this paper can be useful in a variety of settings where signatures are employed.  Beyond volatility modeling, signatures have found wide-ranging applications across various domains; see \citet{chevyrev2016primer} for a comprehensive overview of the signature method. In particular, they have been used   to solve stochastic control problems \citep*{bayer2023optimal, bayer2023primal, jaber2025signatureapproachpricinghedging} to capture memory effects and nonlinearities in machine learning for time series \citep*{levin2016learningpastpredictingstatistics, drobac2025slidingwindowsignaturestimeseries, jaber2025hedgingmemoryshallowdeep}, and in finance \citep*{arribas-nonparam, arribas-optiexec}.  

We finally mention that an alternative framework to Malliavin calculus is Functional Itô Calculus, introduced by \citet{Dupire2019} and subsequently employed by \citet{dupire2022functional} to prove the functional Taylor expansion. These results were then extended by \citet*{cuchiero2025functionalito} to functionals of c\`adl\`ag rough paths, leveraging signature approximation properties, and by \citet{bielert2025roughfunctionalitoformula} using rough integration theory. The connection between the functional (vertical) derivative and the Malliavin derivative was established in \citet{Cont2013}. Building on these ideas, \citet{Jazaerli2017} proposed an approach to Greek computation based on integration by parts with functional derivatives. However, their tractable formula applies only to so-called {weakly path-dependent} functionals, which exclude many path-dependent derivative contracts of practical interest.

\paragraph{Outline.} Section~\ref{sect:summarymain} provides a summary of our main results.  Section~\ref{sect:preliminaries} introduces the necessary preliminaries from path signatures. In Section~\ref{sect:der_ito_process}, we derive the formula for the Malliavin derivative of the signature of Itô processes. Section~\ref{sect:malliavin_sigW} is devoted to developing the framework for working with the signature of Brownian motion and provides explicit computations of certain Malliavin calculus operators for this case. In Section~\ref{sect:ibp}, we prove the integration by parts formula for random variables that depend on the Brownian signature. Finally, in Section~\ref{sect:application}, we apply these results to the computation of Greeks for possibly path-dependent options under the signature volatility model. Appendix~\ref{sect:strato_diff} contains the proof of the Malliavin differentiability of Stratonovich integrals.

\section{Summary of main results}\label{sect:summarymain}

We first introduce the notation  and basic framework in Section~\ref{sect:prelim_first}, and then present streamlined versions of our principal theorems in Section~\ref{sect:mainres}.

\subsection{Notations}  \label{sect:prelim_first}
From now on, let $T>0$ be fixed. Throughout this paper, we work on a filtered probability space \((\Omega, \mathcal{F}, (\mathcal{F}_t^W)_{t \in [0, T]}, \P)\) supporting a \(d\)-dimensional Brownian motion \(W\), where \((\mathcal{F}_t^W)_{t \in [0, T]}\) denotes the filtration generated by \(W\).

We will denote by $\circ$ the Stratonovich integrals. Namely, for two continuous semimartingales $X = (X_t)_{t\in[0, T]}$ and $Y = (Y_t)_{t\in[0, T]}$, the Stratonovich integral is defined by
$$
\int_0^\cdot Y_s\circ dX_s = \int_0^\cdot Y_s\, dX_s + \dfrac{1}{2}\left\langle X, Y\right\rangle_\cdot,
$$
where the integral on the right-hand side is the standard Itô integral $\langle X, Y \rangle$ denotes the quadratic covariation  between $X$ and $Y$.

We will denote by $C^k_p(V, W)$ (resp. $C^k_b(V, W)$) the space of $k$ times differentiable functions from $V$ to $W$ with all derivatives of polynomial growth (resp. bounded).

\paragraph{Path signature.} We introduce the key objects needed to present the main results of this work. For a more thorough introduction to path signatures, we refer to \citet{chevyrev2016primer}. We start with the definition of the path signature of a continuous $\R^d$-valued semimartingale $X = (X_t)_{t \in [0, T]}$, which will be denoted by $\mathbb{X}$. The \textit{path signature} $\sigX[s, t]$ of $X$ over $[s, t]$ for $0 \le s \le t \le T$ is the infinite sequence of iterated Stratonovich integrals:
\begin{equation}\label{eq:sig_def}
    \sigX[s, t] := \left( \sigX[s, t]^{\word{i_1\ldots i_n}} \right)_{\word{i_1\ldots i_n} \in V}
    = \left(\ \, \idotsint\limits_{s \le u_1 \le \ldots \le u_n \le t}\circ dX_{u_1}^{i_1}\ldots\circ dX_{u_n}^{i_n}\right)_{\word{i_1\ldots i_n} \in V}.
\end{equation}
Here, $V$ denotes the set of all words in the alphabet $A = \{\word{1}, \ldots, \word{d}\}$. We will also use the standard notation $\sigX[t] := \sigX[0, t]$.

\begin{sqremark}
    Throughout this paper, we will often consider the \textit{time-augmented} paths $\widehat{X}\colon t \mapsto (t, X_t) \in \R^{d+1}$ and their signatures, denoted by $\widehat{\mathbb{X}}_t$. In this case, it is convenient to consider the alphabet $\{\word{0}, \word{1}, \ldots, \word{d}\}$, where the letter $\word{0}$ corresponds to integration with respect to the time component, while the letters $\word{i}$ correspond to integration with respect to $X^i$ for $i = 1, \ldots, d$.
\end{sqremark}

\paragraph{Extended tensor algebra.} For $n \ge 0$, each word $\word{v} = \word{i_1 \ldots i_n} \in V$ of length $|\word{v}| = n$, with $\word{i_k} \in A,\, k = 1, \ldots, n$, can be identified with a basis vector $e_{i_1} \otimes \ldots \otimes e_{i_n}$ of $(\R^d)^{\otimes n}$, where $e_1, \ldots, e_d$ form an orthonormal basis of $\R^d$. We use the convention $(\R^d)^{\otimes 0} \cong \R$ and denote the corresponding empty word by $\emptyword$. This leads to the definition of the \textit{extended tensor algebra}
\begin{equation}
    \eTA := \prod_{n \ge 0} (\R^d)\conpow{n}
    = \left\{ \bell = (\bell_0, \bell_1, \ldots)\colon\ \bell_n \in (\R^d)\conpow{n},\ n \ge 0 \right\}.
\end{equation}
Since each element $\bell_n \in (\R^d)\conpow{n}$ admits a representation $\bell_n = \sum_{|\word{v}| = n} \bell^{\word{v}}\word{v}$, each element $\bell \in \eTA$ can be written as the formal sum
\begin{equation}
    \bell = \sum_{n \ge 0}\sum_{|\word{v}| = n} \bell^{\word{v}}\word{v}.
\end{equation}
In particular, the path signature defined in \eqref{eq:sig_def} belongs to $\eTA$ and admits the representation
\begin{equation*}
    \sigX[s, t] = \sum_{n \ge 0}\sum_{|\word{v}| = n} \sigX[s, t]^{\word{v}}\word{v}.
\end{equation*}

\paragraph{Tensor algebra and duality.} We denote by $\TA \subset \eTA$ the \textit{tensor algebra}
$$
\TA := \bigoplus_{n \ge 0} (\R^d)\conpow{n},
$$
    i.e. the subspace of sequences $\bell \in \eTA$ having only finitely many non-zero components. Note that $\TA$ is the topological dual of $\eTA$ (equipped with the product topology), with pairing given by
\begin{equation*}
    \langle \bell, \mathbb{X} \rangle
    = \sum_{n \ge 0}\sum_{|\word{v}| = n} \bell^{\word{v}}\mathbb{X}^{\word{v}},
    \quad \bell \in \TA,\quad \mathbb{X} \in \eTA,
\end{equation*}
where the sum is well-defined since only finitely many terms are nonzero. Moreover, given a continuous linear operator ${\bf A}\colon \eTA \to \eTA$, we call ${\bf A}^*\colon \TA \to \TA$ its \textit{adjoint} if
$$
    \langle \bell, {\bf A}\mathbb{X}\rangle = \langle {\bf A}^*\bell, \mathbb{X}\rangle,
$$
for all $\bell \in \TA$ and $\mathbb{X} \in \eTA$.
\paragraph{Algebraic structure.} The extended tensor algebra $\eTA$ becomes an algebra with unit element $\emptyword$ when equipped with the standard operations of addition, scalar multiplication, and the tensor product defined by
$$
\bell\otimes\bp \in \eTA, \qquad (\bell\otimes\bp)_n = \sum_{k = 0}^n \bell_k \otimes \bp_{n - k}, \quad n \geq 0.
$$
A key algebraic property of the path signature, known as the \citet{chen1957integration} identity, states that the signature of a path over $[s, t]$ is equal to the product of its signatures over $[s, u]$ and $[u, t]$ for all $s \leq u \leq t$, namely,
\begin{equation}\label{eq:chen}
    \sigX[s, t] = \sigX[s, u] \otimes \sigX[u, t].
\end{equation}
In particular, it follows from the uniqueness result of \citet{boedihardjo2016signature} that the signature of a path $X$ concatenated with its time-reversal $ \overleftarrow{X}^{[0, T]}\colon t \mapsto X_{T - t}$ is trivial. By Chen's identity,
$$
\sigX[T] \otimes \overleftarrow{\mathbb{X}}_T^{[0, T]} = \emptyword,
$$
and hence the inverse group element of \(\sigX[T]\) is given by the signature of the time-reversed path, \(\sigX[T]^{-1} = \overleftarrow{\mathbb{X}}_T^{[0, T]}\).

\paragraph{Malliavin derivative of path signatures.} 

We introduce the space of simple functionals as follows:
\begin{align*}
\mathcal{S} :=  \left\{F\colon \Omega \to \R\colon\  F(\omega)=f(W_{t_{k+1}}(\omega)-W_{t_k}(\omega),\ k=0,\ldots, 2^{n}-1),\ f \in \mathcal{C}^{\infty}_{p}((\mathbb{R}^{d})^{2^{n}}, \R),\ t_{k}=\frac{kT}{2^{n}},\ n \in \mathbb{N}  \right\}.
\end{align*}
For $F \in \mathcal{S}$, we define the \textit{Malliavin derivative} of $F$ as an element of $L^2(\Omega\times[0, T], \R^{d})$ with components
\begin{align}\label{eq:mal_deriv_def}
\mathbf{D}^{j}_{t} F = \partial_{x^{j}_l} f(W_{t_{k+1}} - W_{t_k}, k=0,\ldots, 2^{n}-1), \quad j = 1, \ldots, d,
\end{align}
for $t \in (t_l, t_{l+1}]$, where $\partial_{x^{j}_l}$denotes the partial derivative with respect to the $j$-th component of the argument $x_l$ , associated with the increment $W_{t_{l+1}} - W_{t_{l}}$.
We use the convention $\D_0^j F=0$. 

Higher-order Malliavin derivatives are defined recursively as
\begin{align*}
\mathbf{D}_{t_{1}}\ldots \mathbf{D}_{t_{\ell}}F \in (\R^d)^{\otimes \ell}.
\end{align*}

Notice that $\mathcal{S}$ is dense in $L^{2}(\Omega)$, and $\mathbf{D}$ is a closable operator from $L^{2}(\Omega)$ to $L^2(\Omega\times[0, T], \R^{d})$.
For $k = 1, 2, \ldots$, the space $\mathbb{D}^{k, 2}$ is defined as the closure of $\mathcal{S}$ with respect to the norm
\begin{align*}
\Vert F \Vert_{\mathbb{D}^{k,2}}^2 := \E\left[|F|^2\right] + \sum_{m=1}^k\E\left[\int_0^T\ldots\int_0^T |\D_{t_1}\ldots\D_{t_m} F|_{(\R^d)^{\otimes m}}^2\, dt_1\ldots dt_m\right].
\end{align*}
In particular, $\mathbb{D}^{k,2}$ is equipped with the inner product 
\begin{align*}
\langle F,G \rangle_{k,2} := \E\left[FG\right] + \sum_{m=1}^k\E\left[\int_0^T\ldots\int_0^T \langle\D_{t_1}\ldots\D_{t_m} F, \D_{t_1}\ldots\D_{t_m} G\rangle_{(\R^d)^{\otimes m}}\, dt_1\ldots dt_m\right].
\end{align*}
and hence forms a Hilbert space. We also define $\mathbb{D}^{\infty, 2} := \bigcap_{k \geq 1} \mathbb{D}^{k,2}$.

The Malliavin derivative satisfies the chain rule. For a random vector $F = (F_1, \dots, F_k)$ such that $F_i \in \mathbb{D}^{1,2}$ for all $i$, and for a function $\varphi \in C^1_b(\mathbb{R}^k, \mathbb{R})$, the random variable $\varphi(F)$ is also Malliavin differentiable, and
\begin{equation}\label{eq:chain_rule}
    \D(\varphi(F)) = \sum_{i=1}^{k} \partial_{x_i} \varphi(F) \D F_i.
\end{equation}

When the signature components \(\sigX[T]^{\word{v}}\) of an $\R^m$-valued process \(X\) belong to \(\mathbb{D}^{k,2}\) for all $\word{v} \in V$, we denote by \(\D_{t_1}^{j_1}\ldots\D_{t_k}^{j_k} \sigX[T]\) the element-wise application of the Malliavin derivative:
\begin{equation}
    \D_{t_1}^{j_1}\ldots\D_{t_k}^{j_k} \sigX[T] := \left( \D_{t_1}^{j_1}\ldots\D_{t_k}^{j_k} \sigX[T]^{\word{v}} \right)_{\word{v} \in V} \in \eTA[m].
\end{equation}
Then the element $\underbrace{\D \cdots \D}_{k\text{ times}} \sigX[T]$ belongs to
$
L^2\!\left(\Omega \times [0,T]^k,\, (\mathbb{R}^d)^{\otimes k} \otimes \eTA[m]\right).
$

\subsection{Main results}\label{sect:mainres}

We  now present our main results.

\paragraph{Malliavin derivative of signatures of Itô processes.} Consider a continuous $\R^m$-valued Itô process of the form
$$
dX_t = a_t^0\,dt + \sum_{k = 1}^d a_t^k \circ dW_t^k = A_t \circ d\widehat{W}_t,
$$
where \(a_t^j \in \R^m\), \(j = 0, 1, \ldots, d\), are square-integrable adapted processes satisfying additional Malliavin differentiability and regularity conditions specified in Assumption~\ref{assump:Strato_reg_assump}, which together form the columns of the matrix $A_t \in \R^{m \times (d+1)}$, and $\widehat{W}_t = (t, W_t)$. We show that the Malliavin derivative of $\sigX$, the signature of $X$, when exists, is given explicitly by
\begin{equation}
    \D_s \sigX[t] = \sigX[s] \otimes A_s \otimes \sigX[s, t] \;+\; \int_s^t \sigX[u] \otimes \left( \D_s A_u \circ d\widehat{W}_u \right) \otimes \sigX[u, t], \quad s \leq t,
\end{equation}
see Theorem~\ref{thm:malliavindersig}. In particular, when \(X_t = \widehat{W}_t = (t, W_t)\), this implies
\begin{equation}\label{eq:mal_multiple_der}
    \D_{s_1}^{i_1} \cdots \D_{s_n}^{i_n} \sigW[t]
    = \sigW[s_1] \otimes \word{i_1} \otimes \sigW[s_1, s_2] \otimes \word{i_2} \otimes \cdots \otimes \word{i_n} \otimes \sigW[s_n, t],
    \quad 0 \leq s_1 \leq \cdots \leq s_n \leq t.
\end{equation}
{The ``pierced'' formula \eqref{eq:mal_multiple_der} shows that iterated Malliavin derivatives of the Brownian signature correspond to inserting the letters 
$\word{i_1}, \ldots, \word{i_n}$ 
into the signature at the differentiation times. The formula admits a natural geometric interpretation, as explained in the next remark.
}
\begin{sqremark}
    By definition \eqref{eq:mal_deriv_def}, the Malliavin derivative 
    $\D^i_{s},\, s\geq 0$ of a functional $F((W_u)_{u \in [0,t]})$ can be formally viewed as the directional derivative of $F$ in the direction of the perturbation $\indic{[s, +\infty)}\word{i}$, that is,
    \[
        \D_s^i\sigW = \frac{d}{d\epsilon}\bigg|_{\varepsilon = 0}
        F\big((W_u + \varepsilon\,\indic{u \ge s}\word{i})_{u \in [0,t]}\big).
    \]
    Of course, the Malliavin derivative can be rigorously interpreted as a directional derivative only along directions in the Cameron–Martin space, which does not contain indicator functions. Therefore, this remark is purely formal and is intended only as a geometric illustration of the result.
    Geometrically, this variation corresponds to inserting a short ``vertical'' segment of length $\varepsilon$ in the direction $\word{i}$ between the two parts $(W_u)_{u \in [0,s]}$ and $(W_u)_{u \in [s,t]}$ of the Brownian path (see Figure~\ref{fig:pert_path}). 
    The signature of this vertical segment is $\exp^\otimes(\varepsilon\,\word{i})  := \sum\limits_{n\ge 0} \dfrac{\varepsilon^n\word{i}^{\otimes n}}{n!}$.
    Hence, by Chen's identity, the signature $\sigW[t]^\varepsilon$ of the perturbed path satisfies
    \[
        \sigW[t]^\varepsilon
        = \sigW[s] \otimes \exp^\otimes(\varepsilon\,\word{i}) \otimes \sigW[s,t].
    \]
    Differentiating at $\varepsilon = 0$ yields
    \[
        \D_s^i\sigW = \frac{d}{d\varepsilon}\bigg|_{\varepsilon = 0}
        \big(\sigW[s] \otimes \exp^\otimes(\varepsilon\,\word{i}) \otimes \sigW[s,t]\big)
        = \sigW[s] \otimes \word{i} \otimes \sigW[s,t].
    \]
    This provides a clear geometric link between the Malliavin derivative and the vertical (or pathwise) functional derivatives of the signature functionals considered by \citet*{cuchiero2025functionalito}.
\end{sqremark}

\begin{center}
\begin{figure}[H]
    \centering
\begin{tikzpicture}[scale=3]
        \draw[->] (-0.1, 0) -- (2.6, 0) node[right] {$\word{0}$};
        \draw[->] (0, -0.6) -- (0, 1) node[right] {$\word{i}$};

        \draw[dashed, -] (2.5, -0.43) -- (2.5, 0) node[above] {$t$};
        \draw[dashed, -] (1.0, 0.38) -- (1, 0) node[below] {$s$};
        
        \draw[very thick, alert_color, -] (0.0, 0.0) -- (0.05, 0.17) -- (0.1, 0.14) -- (0.15, 0.14) -- (0.2, 0.07) -- (0.25, 0.25) -- (0.3, 0.1) -- (0.35, 0.08) -- (0.4, 0.25) -- (0.45, 0.44) -- (0.5, 0.63) -- (0.55, 0.73) -- (0.6, 0.56) -- (0.65, 0.43) -- (0.7, 0.51) -- (0.75, 0.59) -- (0.8, 0.32) -- (0.85, 0.19) -- (0.9, 0.3) -- (0.95, 0.32) -- (1.0, 0.38) -- (1.05, 0.36) -- (1.1, 0.24) -- (1.15, 0.28) -- (1.2, 0.22) -- (1.25, 0.14) -- (1.3, 0.03) -- (1.35, 0.12) -- (1.4, 0.15) -- (1.45, -0.13) -- (1.5, -0.1) -- (1.55, -0.1) -- (1.6, -0.17) -- (1.65, -0.08) -- (1.7, -0.03) -- (1.75, -0.27) -- (1.8, -0.45) -- (1.85, -0.29) -- (1.9, -0.3) -- (1.95, -0.12) -- (2.0, 0.1) -- (2.05, 0.1) -- (2.1, -0.12) -- (2.15, -0.11) -- (2.2, -0.32) -- (2.25, -0.17) -- (2.3, -0.19) -- (2.35, -0.2) -- (2.4, -0.21) -- (2.45, -0.35) -- (2.5, -0.43);

        \draw[very thick, main_color, -] (1.0, 0.38 + 0.3) -- (1.05, 0.36 + 0.3) -- (1.1, 0.24 + 0.3) -- (1.15, 0.28 + 0.3) -- (1.2, 0.22 + 0.3) -- (1.25, 0.14 + 0.3) -- (1.3, 0.03 + 0.3) -- (1.35, 0.12 + 0.3) -- (1.4, 0.15 + 0.3) -- (1.45, -0.13 + 0.3) -- (1.5, -0.1 + 0.3) -- (1.55, -0.1 + 0.3) -- (1.6, -0.17 + 0.3) -- (1.65, -0.08 + 0.3) -- (1.7, -0.03 + 0.3) -- (1.75, -0.27 + 0.3) -- (1.8, -0.45 + 0.3) -- (1.85, -0.29 + 0.3) -- (1.9, -0.3 + 0.3) -- (1.95, -0.12 + 0.3) -- (2.0, 0.1 + 0.3) -- (2.05, 0.1 + 0.3) -- (2.1, -0.12 + 0.3) -- (2.15, -0.11 + 0.3) -- (2.2, -0.32 + 0.3) -- (2.25, -0.17 + 0.3) -- (2.3, -0.19 + 0.3) -- (2.35, -0.2 + 0.3) -- (2.4, -0.21 + 0.3) -- (2.45, -0.35 + 0.3) -- (2.5, -0.43 + 0.3);
        \draw[very thick, my_blue, -] (1.0, 0.38) -- (1, 0.38 + 0.3) node[midway, xshift=-1.3ex]{\textcolor{my_blue}{$\varepsilon$}};

        \draw[->,decorate,decoration={snake,amplitude=1mm,segment length=5mm, pre length=0.3ex,post length=0.3ex}] (0.7, 0.77) -- (1.0 - 0.1, 0.38 + 0.2);

        \draw [decorate,decoration={brace, amplitude=2ex, mirror}]
  (0.05,-0.1) -- (1 - 0.05,-0.1) node[midway, yshift=-4ex]{\textcolor{alert_color}{$\sigW[s]$}};
        \draw [decorate,decoration={brace, amplitude=2ex}]
          (1 + 0.05, 0.7) -- (2.5 - 0.05, 0.7) node[midway, yshift=4ex]{\textcolor{main_color}{$\sigW[s, t]$}};

        \node[above] at (0.7, 0.77) {$\textcolor{my_blue}{\exp^\otimes(\varepsilon\,\word{i})}$};
        \node[below] at (1.6, -0.18) {$\textcolor{alert_color}{W}$};
\end{tikzpicture}
    \caption{Perturbed path and signatures of its components.}
    \label{fig:pert_path}
\end{figure}
\end{center}

\paragraph{Integration of Malliavin derivatives.}
The algebraic structure of the ``pierced'' signature \eqref{eq:mal_multiple_der} allows us to rewrite certain functionals that appear in the Malliavin calculus applied to \(\sigW\) as linear functionals. Motivated by \eqref{eq:mal_multiple_der}, we show in Theorem~\ref{thm:iterated_psi} that for a general \(d\)-dimensional path \(X\),
\begin{equation}\label{eq:inserted_integration}
    \int_{0 \leq s_1 \leq \cdots \leq s_n \leq t}
    \left(
        \sigX[s_1] \otimes \word{i_1} \otimes \sigX[s_1, s_2]
        \otimes \cdots \otimes \word{i_n} \otimes \sigX[s_n, t]
    \right)
    \circ dX_{s_1}^{j_1} \cdots \circ dX_{s_n}^{j_n}
    =
    \mathbf{\Psi}_{\word{i_1}, \ldots, \word{i_n}}^{\word{j_1}, \ldots, \word{j_n}}(\sigX[t]),
\end{equation}
where \(\mathbf{\Psi}_{\word{i_1}, \ldots, \word{i_n}}^{\word{j_1}, \ldots, \word{j_n}} : \eTA \to \eTA\) is an operator acting on the extended tensor algebra by replacing the letters \(\word{i_1}, \ldots, \word{i_n}\) with \(\word{j_1}, \ldots, \word{j_n}\), as described in Definition~\ref{def:psi_def}.  
Its adjoint is shown to be \(\mathbf{\Psi}^{\word{i_1}, \ldots, \word{i_n}}_{\word{j_1}, \ldots, \word{j_n}}\), so that
\begin{equation*}
    \langle \bell,\, \mathbf{\Psi}_{\word{i_1}, \ldots, \word{i_n}}^{\word{j_1}, \ldots, \word{j_n}}(\sigX[]) \rangle
    = 
    \bracketsigX[]{
        \mathbf{\Psi}^{\word{i_1}, \ldots, \word{i_n}}_{\word{j_1}, \ldots, \word{j_n}}(\bell)
    },
    \qquad \bell \in \TA,\ \sigX[] \in \eTA,
\end{equation*}
as shown in Proposition~\ref{prop:psi_adjoint}.

This allows us to identify certain Malliavin-calculus operations applied to the Brownian signature \(\sigW\) with operators on \(\TA[d+1]\) and to show that the class of random variables
\begin{equation}\label{eq:sig_polynom}
    \mathcal{P}(\sigW)
    :=
    \left\{
        \bracketsigW[t]{\bell}
        \;\colon\;
        \bell \in \TA[d+1]
    \right\},
\end{equation}
defined as linear functionals of the Brownian signature with finitely many non-zero coefficients, is closed under these operators. For example, 
\begin{equation*}
    \int_{0 \leq s_1 \leq \cdots \leq s_n \leq t}
        \D_{s_1}^{i_1} \cdots \D_{s_n}^{i_n}
        \langle \bell, \sig \rangle
        \circ d\widehat{W}_{s_1}^{j_1} \cdots \circ d\widehat{W}_{s_n}^{j_n}
    =
    \bracketsigW{
        \mathbf{\Psi}^{\word{i_1 \ldots i_n}}_{\word{j_1 \ldots j_n}}(\bell)
    }.
\end{equation*}

The class \eqref{eq:sig_polynom} can be viewed as a class of non-commutative path-dependent polynomials of Brownian motion. It is important for applications due to its approximation properties (see, for instance, \citep{levin2016learningpastpredictingstatistics} or \citep[Subsection~2.2]{cuchiero2022theocalib} for the continuous semimartingale framework) and its ubiquitous role in stochastic Taylor expansions \citep{kloeden1992stochastic, benarous1989flots, AbiJaber2024a}.

\paragraph{Ornstein--Uhlenbeck semigroup and its generator.} We next consider the generalized Ornstein--Uhlenbeck semigroup, which is a family of operators acting on
\( L^{2}(\Omega, \mathcal{F}_T^W, \mathbb{P}) \).
Note that any random variable \( F \in L^{2}(\Omega, \mathcal{F}_T^W, \mathbb{P}) \)
can be represented as a measurable functional of the Brownian motion, that is,
\( F = F(W) \). For $\bm\kappa = (\kappa_1, \ldots, \kappa_d) \in \R^d_+$, the corresponding Ornstein--Uhlenbeck semigroup is then defined by
\begin{equation}\label{eq:ou_semi_def}
    {\bf T}_\vartheta^{\bm{\kappa}} F(W) = \E[F( B^{\vartheta, \bm{\kappa}}) \mid \F^W_T],
\end{equation}
where
$$
    B^{\vartheta, \bm{\kappa}}_t
    := \left(e^{-\kappa_i\vartheta} W_t^i + \sqrt{1 - e^{-2\kappa_i\vartheta}} W_t^{\perp, i}\right)_{i = 1, \ldots, d},\quad t \in [0,T], \quad \vartheta \geq 0,
$$
and $W^{\perp}$ is a standard Brownian motion on $\R^d$ independent of $W$.
The infinitesimal generator of the semigroup $\left({\bf T}_\vartheta^{\bm\kappa}\right)_{\vartheta \ge 0}$ is defined by
\begin{equation}\label{eq:OU_gen_def}
    {\bf L}^{\bm{\kappa}} = \dfrac{d}{d\vartheta}\bigg|_{\vartheta=0}{\bf T}_\vartheta^{\bm{\kappa}}.
\end{equation}

Theorem~\ref{thm:ou_semigroup} shows that, for \( F(W) = \sigW \), 
the Ornstein--Uhlenbeck semigroup and its generator, viewed as linear operators acting on \( L^{2}(\Omega, \mathcal{F}_T^W, \mathbb{P}) \), 
can be identified with linear operators on the tensor algebra:
\begin{align}
    {\bf T}^{\bm\kappa}_\vartheta\sigW &= \exp\left(\sum_{i=1}^d\frac{1 -e^{-2\kappa_i\vartheta}}{2}{{\bf\Psi}}^{\word{0}}_{\word{ii}}\right) \left(\prod_{i=1}^d{\bf J}_{\word{i}}^{\kappa_i\vartheta}\right)\sigW,  \\
    {\bf L}^{\bm\kappa}\sigW &= \sum_{i=1}^d\kappa_i({{\bf\Psi}}^{\word{0}}_{\word{ii}} - {\bf \Lambda}_{\word{i}})\sigW,
\end{align}
and that ${\bf T}_\vartheta^{\bm\kappa}$ and ${\bf L}^{\bm\kappa}$ map $\mathcal{P}(\sigW)$ to itself:
\begin{align*}
    {\bf T}_\vartheta^{\bm\kappa}\bracketsigW{\bell} &= \bracketsigW{\left(\prod_{i=1}^d{\bf J}_{\word{i}}^{\kappa_i\vartheta} \right)\exp\left(\sum_{i=1}^d\frac{1 -e^{-2\kappa_i\vartheta}}{2}{{\bf\Psi}}^{\word{ii}}_{\word{0}}\right)(\bell)}, \\ 
    {\bf L}^{\bm\kappa}\bracketsigW{\bell} &= \bracketsigW{\sum_{i=1}^d\kappa_i({{\bf\Psi}}^{\word{ii}}_{\word{0}} - {\bf \Lambda}_{\word{i}})(\bell)}.
\end{align*}
Here, the diagonal operator ${\bf \Lambda}_{\word{i}}$ multiplies each coordinate $\bell^{\word{v}}$ by the number $|\word{v}|_{\word{i}}$ of occurrences of $\word{i}$ in $\word{v}$, and ${\bf J}_{\word{i}}^{\vartheta}$ multiplies the coordinates $\bell^{\word{v}}$ by $e^{-|\word{v}|_{\word{i}}\vartheta}$.

We also discuss the relation
\[
{\bf L}^i := {\bf L}^{e_i} = -\delta^i \D^i,
\]
between the generator, the Malliavin derivative, and the Skorokhod integral (defined in Section~\ref{sect:skorokhod_and_repr}).

\paragraph{Integration by parts.} Integration by parts is one of the key applications of Malliavin calculus. Typically, it establishes formulae of the form
\begin{equation}\label{eq:IBP_general}
        \E\left[f'(G)F\right] = 
        \E\left[
        f(G)\pi_T\right],
\end{equation}
which hold for $f \in C^1_b$ and random variables $F, G$ satisfying suitable integrability conditions. However, the Malliavin weight $\pi_T$ is often intractable and cannot be computed explicitly. To address this issue, we introduce an operation 
\[
\diamond: \TA[d+1] \times \TA[d+1] \to \TA[d+1],
\] 
which allows for the linearization of the $L^2$ scalar product between the Malliavin derivatives of random variables 
$F = \bracketsigW[T]{\bell}$ and $F' = \bracketsigW[T]{\bell'}$ from $\mathcal{P}(\sigW[T])$:
\begin{equation}
    \bracket{\D F}{\D F'}_{L^2(\Omega\times[0, T], \R^d)} = \bracketsigW[T]{\bell \diamond \bell'}.
\end{equation}
This leads to the integration by parts formula \eqref{eq:IBP_general} established in
Theorem~\ref{thm:ibp_sig} for the case
\begin{equation}
    G = \bracketsigW[T]{\bell^G}, \quad F = \frac{\bracketsigW[T]{\bell^F_1}}{\bracketsigW[T]{\bell^F_2}}, \quad \bell^G, \bell^F_1, \bell^F_2 \in \TA[d+1].
\end{equation} 
{Furthermore, we show that the Malliavin weight $\pi_T$ belongs to the same class as $F$ and provide  explicit and tractable formula for it, in contrast with the typically implicit weights arising in classical Malliavin calculus.}

Finally, we illustrate this result by providing formulas for the sensitivities of path-dependent options within the signature volatility framework of \citet{abijaber2024signature}. More precisely, we consider the class of payoffs representable as linear functionals of the log-price signature, expressing the Malliavin weight as a rational function of the Brownian signature.

\section{Properties of path signatures}\label{sect:preliminaries}

In this subsection, we present some properties of path signatures that will be used throughout this paper.

\paragraph{Signature dynamics.}  
It follows immediately from the definition \eqref{eq:sig_def} that for each $n \geq 1$, every coordinate of the signature satisfies
\begin{equation*}
    \sigX^{\word{i_1 \ldots i_n}}
    = \int_0^t \sigX[s]^{\word{i_1 \ldots i_{n-1}}} \circ dX_s^{i_n}.
\end{equation*}
In compact tensor form, this yields the signature dynamics
\begin{equation}\label{eq:sig_dynamics}
    d\sigX = \sigX \otimes \circ dX_t, 
    \qquad \sigX[0] = \emptyword.
\end{equation}

As already mentioned in Subsection~\ref{sect:prelim_first}, the inverse element $\sigX^{-1}$ with respect to the tensor product is the signature of the time-reversed path $\overleftarrow{X}^{[0, t]}\colon u \mapsto X_{t-u}$, $u \in [0,t]$. 
The next lemma provides the dynamics of this inverse signature.

\begin{proposition}
    The inverse signature $\sigX^{-1}$ is a continuous semimartingale (elementwise) satisfying
    \begin{equation}\label{eq:inv_sig_dyn}
        d\sigX^{-1} = -\, dX_t \otimes \sigX^{-1}, 
        \qquad \sigX[0]^{-1} = \emptyword.
    \end{equation}
\end{proposition}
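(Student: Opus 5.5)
Note that the differential $dX_t$ in \eqref{eq:inv_sig_dyn} must be read in the Stratonovich sense, $-\circ dX_t \otimes \sigX^{-1}$, by analogy with \eqref{eq:sig_dynamics}. The plan is to avoid the time-reversed path entirely and work with the Stratonovich product rule. Define $\mathbb{Y}_t$ as the unique solution in $\eTA$ of the linear Stratonovich equation $d\mathbb{Y}_t = -\circ dX_t \otimes \mathbb{Y}_t$, $\mathbb{Y}_0 = \emptyword$. Because of the graded structure of the tensor product, this equation is solved level by level. The component of $\circ dX_t \otimes \mathbb{Y}_t$ on words of length $n$ involves only $\mathbb{Y}_t$ on words of length $n-1$. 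Explicitly, $\mathbb{Y}^{\word{i_1\ldots i_n}}_t = -\int_0^t \circ dX^{i_1}_s\, \mathbb{Y}_s^{\word{i_2\ldots i_n}}$. By induction on $n$, each component is therefore a well-defined iterated Stratonovich integral, hence a continuous semimartingale. Unrolling the recursion gives $\mathbb{Y}^{\word{i_1\ldots i_n}}_t = (-1)^n\int_{0\le u_n\le\dots\le u_1\le t}\circ dX^{i_n}_{u_n}\cdots\circ dX^{i_1}_{u_1}$.

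Next I show that $\mathbb{Y}_t$ is the tensor inverse of $\sigX$. Since Stratonovich calculus obeys the classical Leibniz rule, and the tensor product acts componentwise as a finite bilinear combination at each level, I get
$$d(\sigX\otimes\mathbb{Y}_t) = (\sigX\otimes\circ dX_t)\otimes\mathbb{Y}_t + \sigX\otimes(-\circ dX_t\otimes\mathbb{Y}_t) = 0,$$
using associativity. Since $\sigX[0]\otimes\mathbb{Y}_0=\emptyword$, it follows that $\sigX\otimes\mathbb{Y}_t=\emptyword$ for all $t$. Every element of $\eTA$ with scalar component $1$ is invertible in $\eTA$, and its inverse is unique. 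Hence $\mathbb{Y}_t=\sigX^{-1}$, which proves both the semimartingale property and \eqref{eq:inv_sig_dyn}. As a consistency check, $\mathbb{Y}_t$ agrees with the signature of the time-reversed path $\overleftarrow{X}^{[0,t]}$, matching the identification stated in Section~\ref{sect:prelim_first}. This is also visible from the explicit formula, after the change of variables $u\mapsto t-u$.

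The main obstacle is justifying the product rule rigorously in the infinite-dimensional algebra. I would do this by fixing a word $\word{v}$ of length $n$ and writing
$$(\sigX\otimes\mathbb{Y}_t)^{\word{v}}=\sum_{\word{v}=\word{uw}}\sigX^{\word{u}}\mathbb{Y}_t^{\word{w}}.$$
This is a finite sum of products of real continuous semimartingales. The scalar Stratonovich product rule $d(AB)=A\circ dB+B\circ dA$ then applies to each term, and regrouping the resulting terms recovers the tensor identity above. The regrouping is a pure telescoping of the deconcatenation sum.
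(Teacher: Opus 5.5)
Your proof is correct, and it runs the paper's argument in the converse direction. The paper differentiates the identity $\sigX\otimes\sigX^{-1}=\emptyword$ with the product rule and then multiplies the resulting differential identity on the left by $\sigX^{-1}$. That argument tacitly presupposes that $\sigX^{-1}$ is already a semimartingale, which is part of what the statement asserts. It also relies on left-multiplying a differential by a process, i.e.\ on associativity of Stratonovich integration.

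You instead construct the candidate $\mathbb{Y}$ level by level as iterated Stratonovich integrals, so its semimartingale property holds by construction. You then verify $d(\sigX\otimes\mathbb{Y}_t)=0$ by the same product-rule cancellation. Your deconcatenation bookkeeping is right: after the scalar Leibniz rule, both sums run over the factorizations $\word{v}=\word{u}\,\word{i}\,\word{w}$ and cancel. Finally, you conclude by uniqueness of inverses in $\eTA$ rather than by left multiplication.

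The paper's route is shorter. It becomes complete with a one-line remark: each component of $\sigX^{-1}$ is a polynomial in finitely many components of $\sigX$, e.g.\ via $\sum_{k}(\emptyword-\sigX)^{\otimes k}$, and is therefore a semimartingale by It\^o's formula. Your route is self-contained and yields, as a by-product, the explicit antipode formula $(\sigX^{-1})^{\word{i_1\ldots i_n}}=(-1)^n\sigX^{\word{i_n\ldots i_1}}$, which makes the identification with the time-reversed signature transparent.

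Your reading of $dX_t$ as $\circ dX_t$ is also the correct one. The paper's product rule, which has no bracket term, only holds in the Stratonovich sense. The It\^o reading would already fail at level two for Brownian motion.
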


\begin{proof}
    Differentiating the identity
    \[
        \sigX \otimes \sigX^{-1} = \emptyword
    \]
    and using the product rule yields
    \[
        d\sigX \otimes \sigX^{-1} + \sigX \otimes d\sigX^{-1} 
        = \sigX \otimes dX_t \otimes \sigX^{-1} + \sigX \otimes d\sigX^{-1} 
        = 0.
    \]
    Multiplying on the left by $\sigX^{-1}$ gives \eqref{eq:inv_sig_dyn}.
\end{proof}

\paragraph{Variation of constants for signatures.}
Since the group increment
\[
    \sigX[s,t] = \sigX[s]^{-1} \otimes\sigX[t]
\]
plays the role of the fundamental solution, it can be used to obtain a variation-of-constants formula for $\eTA$-valued linear SDEs of the form
\begin{equation}\label{eq:G_val_sde}
    d\sigY[t] = \sigY[t] \otimes \circ dX_t + d\bsigma_t.
\end{equation}
We interpret \eqref{eq:G_val_sde} elementwise: for any truncation order $N$, the truncated equation is a finite-dimensional linear Stratonovich SDE whose coefficients depend only on the first $N$ levels of $\sigY[t]$.
Thus, if each truncated system admits a unique strong solution, we say that \eqref{eq:G_val_sde} admits a unique strong solution.

\begin{proposition}\label{prop:variation_of_const}
    Let $X = (X_t)_{t \in [0,T]}$ be a continuous $\R^d$-valued semimartingale with signature $\sigX$.  
    Suppose that each component $\bsigma^{\word{v}}$, $\word{v} \in V$, is a continuous semimartingale.
    Then \eqref{eq:G_val_sde} admits a unique strong solution $\sigY$, and this solution admits the following representation:
    \begin{equation}\label{eq:sig_variation_of_const}
        \sigY[t] 
        = \sigY[s]\,\sigX[s,t] 
        + \int_s^t \circ d\bsigma_u \otimes \sigX[u,t].
    \end{equation}
\end{proposition}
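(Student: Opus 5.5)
The argument treats the statement as a variation-of-constants identity: existence and uniqueness come from the triangular structure of the truncated systems, and the representation is checked by Stratonovich calculus together with the inverse-signature dynamics \eqref{eq:inv_sig_dyn}.

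\emph{Existence and uniqueness.} Fix a truncation level $N$ and read \eqref{eq:G_val_sde} coordinatewise. For a word $\word{v}=\word{v'i}$ we have $d\sigY[t]^{\word{v}} = \sigY[t]^{\word{v'}}\circ dX^i_t + d\bsigma^{\word{v}}_t$, and for the empty word $d\sigY[t]^{\emptyword}=d\bsigma^{\emptyword}_t$. The system is therefore lower triangular with respect to word length: level $n$ is an explicit Stratonovich integral of level-$(n-1)$ quantities plus a semimartingale. By induction on $n$, each level is uniquely determined as a continuous semimartingale once $\sigY[s]$ is given, so no Lipschitz or SDE theory is needed. The same induction shows that $\sigX$ and $\sigX^{-1}$ are elementwise semimartingales; for $\sigX^{-1}$ this is the preceding proposition.

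\emph{Representation.} Set $\mathbb{Z}_t := \sigY[t]\otimes\sigX[t]^{-1}$. Each coordinate is a finite sum of products of semimartingales, because the tensor product is finite at each level, so Stratonovich's product rule applies without correction terms. Writing \eqref{eq:inv_sig_dyn} in Stratonovich form, which is how it arises from differentiating $\sigX\otimes\sigX^{-1}=\emptyword$ with the Stratonovich chain rule, gives $d\sigX[t]^{-1}=-\circ dX_t\otimes\sigX[t]^{-1}$. Then
\[
d\mathbb{Z}_t = (\sigY[t]\otimes\circ dX_t + \circ d\bsigma_t)\otimes\sigX[t]^{-1} - \sigY[t]\otimes\circ dX_t\otimes\sigX[t]^{-1} = \circ d\bsigma_t\otimes\sigX[t]^{-1}.
\]
Integrating from $s$ to $t$ and multiplying on the right by $\sigX[t]$ yields
\[
\sigY[t] = \sigY[s]\otimes\sigX[s]^{-1}\otimes\sigX[t] + \int_s^t\circ d\bsigma_u\otimes\sigX[u]^{-1}\otimes\sigX[t].
\]
Chen's identity \eqref{eq:chen} gives $\sigX[u]^{-1}\otimes\sigX[t]=\sigX[u,t]$, and this is exactly \eqref{eq:sig_variation_of_const}.

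\emph{Main obstacle.} The delicate point is the last step: moving the right factor $\sigX[t]$ inside the stochastic integral, whose integrand is only $\mathcal{F}_u$-adapted, while $\sigX[u,t]$ is not adapted at time $u$. I will avoid any anticipating integral by reading $\int_s^t \circ d\bsigma_u\otimes\sigX[u,t]$ as defined by $\big(\int_s^t\circ d\bsigma_u\otimes\sigX[u]^{-1}\big)\otimes\sigX[t]$. This is legitimate coordinatewise, since it is a finite sum of an adapted Stratonovich integral multiplied by the $\mathcal{F}_t$-measurable variable $\sigX[t]^{\word{w}}$. A second subtlety is checking that the Itô-to-Stratonovich corrections cancel in the product rule. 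Working in Stratonovich form throughout handles this; alternatively, one can verify directly in Itô form that the bracket terms from $\langle X, X\rangle$ cancel in the computation of $d\mathbb{Z}_t$.
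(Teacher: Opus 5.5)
Your proof is correct and follows the paper's argument. You differentiate $\sigY[t]\otimes\sigX[t]^{-1}$ with the Stratonovich product rule and the inverse-signature dynamics, integrate, multiply on the right by $\sigX[t]$, apply Chen's identity, and read the anticipating integral exactly as in the paper's later remark \eqref{eq:stoch_int_correct}. The only difference is that you get existence and uniqueness directly from the triangular structure of the truncated system, whereas the paper takes uniqueness from this same computation (any solution must equal the formula) and obtains existence by differentiating the formula.
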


\begin{proof}
    For any $\sigY$ satisfying \eqref{eq:G_val_sde}, observe that
    \begin{align*}
        d(\sigY[t] \otimes \sigX^{-1})
        &= d\sigY[t] \otimes \sigX^{-1}
           + \sigY[t] \otimes d\sigX^{-1} \\
        &= (\sigY[t] \otimes dX_t + d\bsigma_t) \otimes \sigX^{-1}
           - \sigY[t] \otimes (dX_t \otimes \sigX^{-1}) \\
        &= d\bsigma_t \otimes \sigX^{-1}.
    \end{align*}
    Hence,
    \[
        \sigY[t] \otimes \sigX[t]^{-1}
        = \sigY[s] \otimes \sigX[s]^{-1}
          + \int_s^t d\bsigma_u \otimes \sigX[u]^{-1}.
    \]
    Multiplying on the right by $\sigX[t]$ and using the identity $\sigX[u]^{-1}\otimes\sigX[t] = \sigX[u,t]$ yields \eqref{eq:sig_variation_of_const}. This proves uniqueness. Existence follows from the fact that the process $\sigY$ defined by \eqref{eq:sig_variation_of_const} satisfies \eqref{eq:G_val_sde}, which can be verified by direct differentiation.
\end{proof}

\paragraph{Shuffle property.}  
Another key feature of signatures is that products of linear functionals is itself a linear functional, as a consequence of the shuffle property.  
The shuffle product $\shuprod$ is a bilinear mapping $\TA\times\TA \to \TA$. For words $\word{v}, \word{w} \in V$ and letters $\word{i}, \word{j} \in A$, the shuffle product is defined recursively by
\[
    \word{v}\word{i} \shuprod \word{w}\word{j}
    = (\word{v}\word{i} \shuprod \word{w})\word{j}
      + (\word{v} \shuprod \word{w}\word{j})\word{i},
\]
with the base case $\word{v} \shuprod \emptyword = \emptyword \shuprod \word{v} = \word{v}$.  
Since $\shuprod$ is bilinear, the definition extends to $\TA$: for $\bell, \bp \in \TA$,
\[
    \bell \shuprod \bp
    := \sum_{\word{v},\word{w} \in V} \bell^{\word{v}} \bp^{\word{w}} (\word{v} \shuprod \word{w}).
\]
\begin{proposition}[Shuffle property]\label{prop:shuffle}
    Let $X$ be a continuous $\R^d$-valued semimartingale with signature $\sigX[]$.  
    Then for all $\bell, \bp \in \TA$ and for all $t \in [0, T]$,
    \[
        \bracketsigX{\bell}\, \bracketsigX{\bp}
        = \bracketsigX{\bell \shuprod \bp}.
    \]
\end{proposition}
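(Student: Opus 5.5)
By bilinearity of the pairing and of $\shuprod$, it suffices to prove the identity for basis words $\bell = \word{v}$ and $\bp = \word{w}$, that is,
\[
\sigX[t]^{\word{v}}\,\sigX[t]^{\word{w}} = \bracketsigX{\word{v}\shuprod\word{w}} \qquad \text{for all } t\in[0,T].
\]
I will argue by induction on $|\word{v}|+|\word{w}|$. If either word is empty, the identity is trivial because $\sigX[t]^{\emptyword}=1$ and $\word{v}\shuprod\emptyword=\word{v}$. This also settles total length $0$ and $1$.

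For the inductive step, write $\word{v}=\word{v'}\word{i}$ and $\word{w}=\word{w'}\word{j}$. By the coordinate form of the signature dynamics \eqref{eq:sig_dynamics},
\[
\sigX[t]^{\word{v}} = \int_0^t \sigX[s]^{\word{v'}}\circ dX^i_s, \qquad \sigX[t]^{\word{w}} = \int_0^t \sigX[s]^{\word{w'}}\circ dX^j_s .
\]
Both are continuous semimartingales, and Stratonovich integration obeys the classical product rule $d(YZ)=Y\circ dZ+Z\circ dY$. This is the main technical point. It follows from Itô's product rule $d(YZ)=Y\,dZ+Z\,dY+d\langle Y,Z\rangle$ together with the definition of the Stratonovich integral, which splits the covariation symmetrically between the two terms. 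One also uses the associativity $Y\circ d(\int H\circ dX)=(YH)\circ dX$, which holds because $\langle Y, \int H\,dX\rangle=\int H\,d\langle Y,X\rangle$ and the Itô correction terms match. Applying these two facts gives
\[
\sigX[t]^{\word{v}}\sigX[t]^{\word{w}} = \int_0^t \sigX[s]^{\word{v}}\sigX[s]^{\word{w'}}\circ dX^j_s + \int_0^t \sigX[s]^{\word{v'}}\sigX[s]^{\word{w}}\circ dX^i_s .
\]

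The induction hypothesis applies to the pairs $(\word{v},\word{w'})$ and $(\word{v'},\word{w})$, whose total lengths are smaller, for every $s$. The integrands therefore become $\bracket{\word{v}\shuprod\word{w'}}{\sigX[s]}$ and $\bracket{\word{v'}\shuprod\word{w}}{\sigX[s]}$. These are finite linear combinations of signature coordinates, so the integrals can be exchanged with the finite sums. Using again $\int_0^t \sigX[s]^{\word{u}}\circ dX^k_s=\sigX[t]^{\word{u}\word{k}}$ for each word $\word{u}$, the right-hand side equals
\[
\bracketsigX{(\word{v}\shuprod\word{w'})\word{j}+(\word{v'}\shuprod\word{w})\word{i}}.
\]
By the recursive definition of $\shuprod$, this is $\bracketsigX{\word{v}\shuprod\word{w}}$. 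The identity holds simultaneously for all $t$ almost surely, since both sides are continuous processes that agree as semimartingales.

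The only genuinely delicate step is justifying the Stratonovich product and associativity rules for these semimartingales. Everything else is algebraic bookkeeping matching the shuffle recursion.
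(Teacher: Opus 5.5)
Your proof is correct, and it takes a different route from the paper, which gives no argument of its own and simply cites \cite[Theorem~2.15]{lyons2007differential}, a reference working with paths of bounded variation. You instead run the induction on $|\word{v}|+|\word{w}|$ directly in the semimartingale setting. The engine is that Stratonovich calculus obeys the first-order Leibniz rule $d(YZ)=Y\circ dZ+Z\circ dY$, together with the associativity $Y\circ d\big(\int H\circ dX\big)=(YH)\circ dX$. You verify both correctly from It\^o's formula, and $H=\sigX[\cdot]^{\word{v'}}$ is itself a continuous semimartingale, so every Stratonovich integral involved is well defined. What your route buys is a self-contained proof that covers continuous semimartingales, which is what the proposition actually states. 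Transferring a bounded-variation result to Stratonovich signatures otherwise needs an extra step, such as piecewise-linear approximation with Wong--Zakai convergence or the geometric rough path viewpoint, and your argument avoids it; the citation buys only brevity. Two small points are left implicit: both sides vanish at $t=0$ when the words are nonempty, and the inductive identity must hold up to indistinguishability so the integrands can be replaced. You address the second at the end of your proposal.
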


\begin{proof}
    See \cite[Theorem 2.15]{lyons2007differential}.
\end{proof}

The shuffle product also linearizes certain entire functions.  
For $\bell \in \TA$, the {shuffle exponential} is defined by
\begin{equation}\label{eq:shuexp_def}
    \exp^{\shuprod}(\bell)
    := \sum_{n \geq 0} \frac{\bell^{\shuprod n}}{n!}
    \in \eTA,
\end{equation}
where $\bell^{\shuprod n}$ is the $n$-fold shuffle product.  
Using the decomposition
\begin{equation}\label{eq:shuexp_computation}
    \exp^{\shuprod}(\bell)
    = e^{\bell^\emptyword}
      \sum_{n \geq 0} \frac{(\bell - \bell^\emptyword \emptyword)^{\shuprod n}}{n!},
\end{equation}
with the convention $\bp^{\shuprod 0}=\emptyword$,
we see that each component contains only finitely many terms, and the shuffle exponential is well defined.  
Formula \eqref{eq:shuexp_computation} also yields a more efficient computational method.

\paragraph{Expected signature of Brownian motion.}  
The expected signature  
\[
    \widehat{\mathcal{E}}_t := \E[\sigW], \quad t \in [0, T],
\]
of the time-extended Brownian path $\widehat{W}\colon t \mapsto (t, W_t^1,\ldots,W_t^d)$ can be computed explicitly.  
By \citet{fawcett2003problems},
\begin{equation}\label{eq:brownian_esig}
    \widehat{\mathcal{E}}_t
    = \exp^\otimes\left(
        \word{0}
        + \frac{1}{2}\sum_{i=1}^d \word{ii}
    \right),
\end{equation}
where $\exp^\otimes(\bell) := \sum\limits_{n\ge 0} \dfrac{\bell^{\otimes n}}{n!}$ for $\bell \in \TA[d+1]$.

\section{Malliavin derivative of an Itô process signature} \label{sect:der_ito_process}

Let $a^0, a^1, \ldots, a^d$ be adapted $\R^m$-valued adapted square-integrable processes. Let us define the $m$-dimensional Itô process $X$ by
\begin{equation}\label{eq:ito_proc_def}
    dX_t = a_t^0\,dt + \sum_{k=1}^d a_t^k \circ dW_t^k = A_t \circ d \widehat{W}_t,
\end{equation}
where $(a^j_t)_{j=0, \ldots, d}$ form the columns of the matrix $A_t \in \R^{m \times (d+1)}$ and $\widehat{W}_t = (t, W_t)$. 
We first aim to establish the following formula for the Malliavin derivative of $X$: for $\mathbb{P} \otimes ds \otimes dt\text{-almost every } (\omega, s, t) \in \Omega \times [0,T]^2,$
\begin{equation}\label{eq:strato_mal-derivative}
    \D_s^i X_t = a_s^i\mathds{1}_{[0,t]}(s) + \int_s^t \D_s^i a_u^0\,du + \sum_{k=1}^d \int_s^t \D_s^i a_u^k \circ dW_u^k,
\end{equation}
similar to the corresponding formula in the Itô integral case \citep[Section~3.2.1]{alos2021malliavin}. Treating Stratonovich integrals requires stronger regularity assumptions on the integrands $a^k$ than in the Itô case.

\begin{assumption}\label{assump:Strato_reg_assump}
We assume that
\begin{enumerate}
    \item $a^0 \in \mathbb{L}^{1,2}$, that is, $a^0 \in L^2(\Omega\times[0, T])$ such that $a^0_t \in \mathbb{D}^{1,2}$ for a.e. $t \in [0, T]$, and 
    \[
    \| a^0 \|_{\mathbb{L}^{1,2}}^2 := \E\left[\int_0^T |a_t^0|^2 \, dt
        + \int_0^T \int_0^T |\D_s a^0_t|^2\, ds\, dt\right] < \infty.
    \]
    \item For $k = 1, \ldots, d$, $a^k \in \mathbb{L}^{2,2}$, that is, $a^k \in L^2(\Omega \times [0,T])$,  $a_t^k \in \mathbb{D}^{2,2}$ for a.e.\ $t \in [0,T]$, and
    \begin{equation}\label{eq:norm_condition}
        \| a^k \|_{\mathbb{L}^{2,2}}^2
        :=
        \E\left[\int_0^T |a_t^k|^2 \, dt
        + \int_0^T \int_0^T |\D_s a_t^k|^2 \, ds\,dt
        + \int_0^T \int_0^T \int_0^T |\D_r \D_s a_t^k|^2 \, dr\, ds\, dt\right]
        < \infty.
    \end{equation}

    \item For $k = 1, \ldots, d$, there exist processes $\D^{+} a^k, \D^{-} a^k \in L^2(\Omega \times [0,T])$ such that
    \begin{equation}\label{eq:strato_reg_assump}
    \begin{aligned}
    \lim_{n \to \infty} \int_0^T
    \sup_{\substack{s < u < T \\ |u-s| \le 1/n}}
    \E\!\left[ \big| \D_s a_u^k - (\D^{+} a^k)_s \big|^2 \right] ds &= 0, \\
    \lim_{n \to \infty} \int_0^T
    \sup_{\substack{0 < u < s \\ |s-u| \le 1/n}}
    \E\!\left[ \big| \D_s a_u^k - (\D^{-} a^k)_s \big|^2 \right] ds &= 0.
    \end{aligned}
    \end{equation}

    \item For $k = 1, \ldots, d$, the processes $\D^{+} a^k$ and $\D^{-} a^k$ belong to $\mathbb{L}^{1,2}$, and their Malliavin derivatives satisfy
    \begin{equation}\label{eq:strato_reg_assump_second}
    \begin{aligned}
    \lim_{n \to \infty} \int_0^T \int_0^T
    \sup_{\substack{s < u < T \\ |u-s| \le 1/n}}
    \E\!\left[
        \big| \D_s \D_r a_u^k - \D_r (\D^{+} a^k)_s \big|^2
    \right] ds\, dr &= 0, \\
    \lim_{n \to \infty} \int_0^T \int_0^T
    \sup_{\substack{0 < u < s \\ |s-u| \le 1/n}}
    \E\!\left[
        \big| \D_s \D_r a_u^k - \D_r (\D^{-} a^k)_s \big|^2
    \right] ds\, dr &= 0.
    \end{aligned}
    \end{equation}
\end{enumerate}
\end{assumption}

\begin{sqremark}\label{rmk:L_classes}
Condition \eqref{eq:strato_reg_assump_second} is a natural extension of the regularity class $\mathbb{L}^{1,2}_2$, that is, $\mathbb{L}^{1,2}$ together with the Malliavin derivative trace regularity condition \eqref{eq:strato_reg_assump}, which is required for the Stratonovich integrability of $a^k$ (see \citep[Section~3.1.1]{nualart2006malliavin}). In the present setting, we aim to prove not only the Stratonovich integrability of $a^k$, but also the  differentiability of the associated Stratonovich integral, and the well-posedness of the map
\[
(r,t) \mapsto \int_0^t \D_r a_u^k \circ dW_u^k
\]
as an $L^2(\Omega \times [0,T]^2)$-valued process rather than merely as a random variable. Accordingly, condition \eqref{eq:strato_reg_assump_second} arises naturally as a derivative trace condition for the integrand $\D_r a_t^k$. This extension of the results of \citet{nualart2006malliavin} will be used in Theorem~\ref{thm:malliavindersig} to derive the Malliavin derivatives of iterated Stratonovich integrals, for which a process-based point of view appears more natural.
\end{sqremark}

\begin{theorem}\label{thm:strato_mal_der}
Under Assumption~\ref{assump:Strato_reg_assump}, we have $X \in \mathbb{L}^{1,2}$, and \eqref{eq:strato_mal-derivative} holds.
\end{theorem}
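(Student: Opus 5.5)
The plan is to reduce the Stratonovich statement to the known Itô case plus a trace correction term. By definition of the Stratonovich integral, for each $k=1,\ldots,d$,
\[
\int_0^t a^k_u\circ dW^k_u=\int_0^t a^k_u\,dW^k_u+\frac12\langle a^k,W^k\rangle_t .
\]
Under items 1--3 of Assumption~\ref{assump:Strato_reg_assump}, i.e.\ $a^k\in\mathbb{L}^{1,2}_2$, the results of \citet[Section~3.1.1]{nualart2006malliavin} identify the bracket term as a trace. Concretely, the Stratonovich integral equals the Skorokhod integral plus $\frac12\int_0^t\big((\D^+a^k)_s+(\D^-a^k)_s\big)ds$. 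For adapted $a^k$ we have $\D_s a^k_u=0$ for $u<s$, so $\D^-a^k=0$, and the correction reduces to $\frac12\int_0^t(\D^+a^k)_s\,ds$. Hence
\[
X_t=\int_0^t a^0_u\,du+\sum_{k}\int_0^t a^k_u\,dW^k_u+\frac12\sum_k\int_0^t(\D^{k,+}a^k)_u\,du ,
\]
and we differentiate each piece separately.

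\textbf{Step 1: the Itô and Lebesgue pieces.} For the Itô piece, $a^k\in\mathbb{L}^{1,2}$ gives, via the commutation relation $\D_s\delta(u)=u_s+\delta(\D_s u)$, that $\int_0^t a^k\,dW^k\in\mathbb{D}^{1,2}$ with
\[
\D^i_s\int_0^t a^k_u\,dW^k_u=\delta_{ik}a^k_s\mathds 1_{[0,t]}(s)+\int_s^t\D^i_s a^k_u\,dW^k_u .
\]
For the Lebesgue pieces, closability of $\D$ together with $a^0,\D^+a^k\in\mathbb{L}^{1,2}$ (item 4) gives $\D_s\int_0^t a^0_u\,du=\int_s^t\D_s a^0_u\,du$, and similarly for the correction term. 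To get the $\mathbb{L}^{1,2}$ norm of $X$ as a process in $t$, I will bound all of these uniformly in $t$ via the Itô isometry and Cauchy--Schwarz, using the $\mathbb{L}^{2,2}$ norm \eqref{eq:norm_condition}.

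\textbf{Step 2: reassembling into \eqref{eq:strato_mal-derivative}.} I will show that, for a.e.\ $s$ and each $i$, the process $u\mapsto\D^i_s a^k_u$ on $[s,t]$ is itself Stratonovich integrable, with
\[
\int_s^t\D^i_s a^k_u\circ dW^k_u=\int_s^t\D^i_s a^k_u\,dW^k_u+\frac12\int_s^t\big(\D^{k,+}\D^i_s a^k\big)_u\,du .
\]
Here $\D^{k,+}\D^i_s a^k$ denotes the right trace limit of $\D^k_u\D^i_s a^k_r$ as $r\downarrow u$. Assumption \eqref{eq:strato_reg_assump_second} says exactly that this limit exists in the required integrated sense and equals $\D^i_s(\D^{k,+}a^k)_u$. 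This is the same trace regularity as in Step~1, now applied to the integrand $\D_s a^k$, which is adapted on $[s,T]$. Substituting, the derivative of the correction term $\frac12\int_s^t\D^i_s(\D^{k,+}a^k)_u\,du$ from Step~1 combines with the Itô term $\int_s^t\D_s^ia^k\,dW^k$ into $\int_s^t\D^i_s a^k_u\circ dW^k_u$. Together with the Lebesgue term this gives \eqref{eq:strato_mal-derivative}.

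\textbf{Step 3 (main obstacle).} The main obstacle is making Step~2 rigorous jointly in $(s,t)$, as an $L^2(\Omega\times[0,T]^2)$-valued object rather than pointwise in $s$. Nualart's characterization of the Stratonovich integral as the $L^2$ limit of Riemann-type approximations $\sum_j\frac{1}{|\Delta_j|}\int_{\Delta_j}a_r\,dr\,(W_{t_{j+1}}-W_{t_j})$ has to be applied with the extra parameter $s$. I would proceed in three moves:
\begin{enumerate}
\item Write these approximations for $\D_s a^k$, with $s$ frozen, and for $a^k$ itself.
\item Bound their error in $L^2(\Omega\times[0,T]^2)$ through the Skorokhod isometry, which involves $\D_r\D_s a$ and hence the $\mathbb{L}^{2,2}$ norm, plus the sup-type trace conditions \eqref{eq:strato_reg_assump_second} integrated over $s$. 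Dominated convergence then lets the mesh go to zero.
\item Differentiate the approximations of $\int a^k\circ dW^k$ directly, use closability of $\D$ to pass to the limit, and identify the limit of the derivatives with the right-hand side of \eqref{eq:strato_mal-derivative}.
\end{enumerate}
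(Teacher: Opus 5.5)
Your argument is correct, but its main line differs from the paper's.

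\textbf{Your route.} You first pass to It\^o form: adaptedness gives $\D^-a^k=0$ and makes the Skorokhod integral an It\^o integral. By Nualart's trace identity, $X_t$ is then a sum of Lebesgue integrals, It\^o integrals and $\frac12\int_0^t(\D^{k,+}a^k)_u\,du$. You differentiate each piece with standard tools: the commutation relation for It\^o integrals, and closability for the Lebesgue integrals. Finally you convert back, using that \eqref{eq:strato_reg_assump_second} says exactly $\D^{k,+}(\D^i_s a^k)=\D^i_s(\D^{k,+}a^k)$. So the derivative of the It\^o correction becomes the Stratonovich correction for the integrand $u\mapsto\D^i_sa^k_u$.

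\textbf{The paper's route.} The paper never goes through It\^o integrals. It constructs parameter-dependent Stratonovich integrals as $L^2(\Omega\times Y)$-limits of the Riemann sums $S^\pi$ (Lemmas~\ref{lem:L2_proj}--\ref{lem:L2_strato}). It then differentiates $S^\pi$ term by term and concludes by closability of $\D$.

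\textbf{How your Step~3 fits.} Your Step~3 is essentially this entire proof, so carrying it out makes Steps~1--2 redundant. Conversely, Steps~1--2 already suffice on their own:
\begin{itemize}
\item The It\^o-side expression for $\D_sX_t$ is jointly measurable and lies in $L^2(\Omega\times[0,T]^2)$ by the isometry.
\item The suprema in \eqref{eq:strato_reg_assump_second} decrease in $n$, so the $dr$-integrated conditions give $\D_ra^k\in\mathbb{L}^{1,2}_2$ for a.e.\ $r$. Nualart's theorem can then be applied for a.e.\ fixed $r$.
\end{itemize}
So the ``main obstacle'' is milder than you suggest.

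\textbf{What each route buys.} Yours exploits adaptedness to get the $\mathbb{L}^{1,2}$ bound on $X$ cheaply via the It\^o isometry. It also makes transparent why a second-order trace condition is needed. The paper's route needs no It\^o reduction, since its lemmas treat $\D^+$ and $\D^-$ symmetrically. It directly yields the $L^2(\Omega\times[0,T]^2)$ convergence of the Riemann sums for $(r,t)\mapsto\int_0^t\D_ra_u\circ dW_u$, which Remark~\ref{rmk:L_classes} wants for Theorem~\ref{thm:malliavindersig}.

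\textbf{A small imprecision.} Under Assumption~\ref{assump:Strato_reg_assump}, $a^k$ need not be a semimartingale, so $\frac12\langle a^k,W^k\rangle$ is not available ``by definition''. Start from the Riemann-sum definition and Nualart's trace identity, as you effectively do in the next sentence.
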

\begin{proof}
The proof is given in Appendix~\ref{sect:strato_diff}.
\end{proof}

We next show that a formula analogous to \eqref{eq:strato_mal-derivative} also holds for the signature $\sigX[\cdot]$ of $X$.

\begin{theorem}\label{thm:malliavindersig}
    Let $X$ be an Itô process defined by \eqref{eq:ito_proc_def}. Assume that $a^{\sigX[]}_\cdot := (\sigX[\cdot] \otimes a^{i}_\cdot)_{i=0, 1, \ldots, d}$ satisfies  Assumption~\ref{assump:Strato_reg_assump} elementwise, that is, coordinatewise with respect to each $\word{v} \in V$.
    Then, $\sigX[] \in \mathbb{L}^{1,2}$ elementwise and   for $\mathbb{P} \otimes ds \otimes dt\text{-almost every } (\omega, s, t) \in \Omega \times [0,T]^2$ with $s\leq t$,
    \begin{equation}
        \D_s \sigX = \sigX[s] \otimes A_s \otimes \sigX[s, t]
        + \int_s^t \sigX[u] \otimes \big(\D_s A_u \circ d\widehat{W}_u\big) \otimes \sigX[u, t],
    \end{equation}
    or, in coordinate form, for $i = 1, \ldots, d$,
    \begin{equation}\label{eq:ito_proc_sig_der_coord}
        \D_s^i \sigX[t]
        = \sigX[s] \otimes a_s^i \otimes \sigX[s, t]
        + \sum_{k=0}^d \int_s^t 
        \big(\sigX[u] \otimes \D_s^i a_u^k \otimes \sigX[u, t]\big) \circ d\widehat{W}_u^k.
    \end{equation}
\end{theorem}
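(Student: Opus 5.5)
The plan is to view the signature as the solution of the linear Stratonovich system \eqref{eq:sig_dynamics}, $d\sigX[t] = \sigX[t]\otimes A_t\circ d\widehat{W}_t$. Level by level, each coordinate $\sigX[t]^{\word{v}\word{j}}$ is itself an Itô process of the form \eqref{eq:ito_proc_def}, driven by the integrands $(\sigX[\cdot]\otimes a^k_\cdot)^{\word{v}\word{j}} = \sigX[\cdot]^{\word{v}}(a^k_\cdot)^j$, which are exactly the components of $a^{\sigX[]}$. Since these integrands satisfy Assumption~\ref{assump:Strato_reg_assump} by hypothesis, Theorem~\ref{thm:strato_mal_der} applies coordinatewise. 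It gives $\sigX[]\in\mathbb{L}^{1,2}$ elementwise together with the identity
\begin{equation*}
\D^i_s\sigX[t] = \sigX[s]\otimes a^i_s + \sum_{k=0}^d\int_s^t \D^i_s\big(\sigX[u]\otimes a^k_u\big)\circ d\widehat{W}^k_u ,\qquad s\le t,
\end{equation*}
with $\D_s^i\sigX[t]=0$ for $s>t$ by adaptedness. The $k=0$ term is an ordinary $du$-integral.

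Next I would expand the integrand with the Leibniz rule, $\D^i_s(\sigX[u]\otimes a^k_u) = \D^i_s\sigX[u]\otimes a^k_u + \sigX[u]\otimes \D^i_s a^k_u$. I expect this product rule to be justified coordinatewise, since the derivative of $a^{\sigX[]}$ is assumed to exist and the factors are Malliavin differentiable. Fixing $s$, the process $\mathbb{Y}_t := \D^i_s\sigX[t]$ for $t\ge s$ then satisfies the linear $\eTA[m]$-valued equation
\begin{equation*}
d\mathbb{Y}_t = \mathbb{Y}_t\otimes A_t\circ d\widehat{W}_t + d\bsigma_t,\qquad \mathbb{Y}_s = \sigX[s]\otimes a^i_s,\qquad d\bsigma_t := \sum_{k=0}^d \sigX[t]\otimes \D^i_s a^k_t\circ d\widehat{W}^k_t ,
\end{equation*}
which has the form \eqref{eq:G_val_sde} with driver $X$ on the interval $[s,T]$. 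Proposition~\ref{prop:variation_of_const} then yields
\begin{equation*}
\mathbb{Y}_t = \mathbb{Y}_s\otimes\sigX[s,t] + \int_s^t \circ d\bsigma_u\otimes\sigX[u,t].
\end{equation*}
Substituting $\mathbb{Y}_s$ and $d\bsigma$ gives exactly \eqref{eq:ito_proc_sig_der_coord}. Here I am relying on the Stratonovich convention: the tensor factor $\sigX[u,t]$ can be moved inside the Stratonovich differential, and the proposition's proof is written in Stratonovich calculus, where the usual product rule holds.

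The main obstacle is rigor in this last identification. Proposition~\ref{prop:variation_of_const} is stated for continuous semimartingales $\bsigma$. Here, however, $\D_s^i a^k_u$ is in general anticipating in $s$ on $[0,u]$, so $t\mapsto\int_s^t(\cdots)\circ dW_u$ is an adapted semimartingale in $t$ only for fixed $s$. I would check that, since $\D^i_s a_u$ is $\F_u$-measurable for $u\ge s$, the integrals are genuine adapted Stratonovich integrals for a.e.\ fixed $s$. They therefore coincide with the anticipating Stratonovich integrals produced by Theorem~\ref{thm:strato_mal_der}, which makes the variation-of-constants argument applicable pathwise in $s$.

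Uniqueness must also be handled level by level. Truncating at level $N$, the equation for $\mathbb{Y}$ is triangular, since the level-$n$ component depends only on lower levels. Induction on word length therefore identifies $\D_s^i\sigX[t]$ with the explicit formula, for a.e.\ $s$ and all $t\ge s$, which completes the argument.
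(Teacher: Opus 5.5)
Your proposal is correct and follows essentially the same route as the paper. You apply Theorem~\ref{thm:strato_mal_der} coordinatewise to the signature dynamics $d\sigX[t]=\sigX[t]\otimes A_t\circ d\widehat{W}_t$, split $\D_s^i(\sigX[u]\otimes a^k_u)$ by the product rule to get a linear equation for $t\mapsto\D_s^i\sigX[t]$ on $[s,T]$, and solve it with Proposition~\ref{prop:variation_of_const}. Your extra remarks make explicit points the paper leaves implicit: the adaptedness of $u\mapsto\D_s^i a_u$ for fixed $s$ (so the driving term is a genuine semimartingale) and the level-by-level, triangular uniqueness.
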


\begin{proof}
The dynamics of the signature $\sigX[\cdot]$ is given by \eqref{eq:sig_dynamics}, so that
\[
\sigX = \emptyword + \int_0^t \sigX[u] \otimes \circ dX_u
= \emptyword + \sum_{k=0}^d \int_0^t \sigX[u] \otimes a_u^k \circ d\widehat{W}_u^k.
\]
In particular $\sigX[\cdot]$ satisfies the assumptions from Theorem \ref{thm:strato_mal_der} elementwise and $\sigX[\cdot] \in \mathbb{L}^{1,2}$. Taking the Malliavin derivative $\D_s^i$ and applying the formula \eqref{eq:strato_mal-derivative} for the Malliavin derivative of Itô processes elementwise yields
\begin{align*}
    \D_s^i \sigX
    &= \sigX[s] \otimes a_s^i
    + \int_s^t \D_s^i \sigX[u] \otimes \circ dX_u
    + \sum_{k=0}^d \int_s^t \sigX[u] \otimes \D_s^i a_u^k \circ d\widehat{W}_u^k.
\end{align*}
Applying the variation-of-constants formula from Proposition~\ref{prop:variation_of_const} to $\sigY[t] = \D_s^i\sigX$ yields \eqref{eq:ito_proc_sig_der_coord}.
\end{proof}

\begin{sqremark}
     Theorem~\ref{thm:malliavindersig} is primarily a structural result: it identifies the form of $\D\sigX[]$ and connects it to the known expression \eqref{eq:strato_mal-derivative} for Itô processes, provided the derivative exists. However, it does not furnish tractable conditions ensuring this existence. Nevertheless, as demonstrated in Corollary~\ref{cor:brown_sig_def} below, the theorem applies to the time-augmented Brownian motion $\widehat{W}$, the main process of interest in this paper, since the integrability of the Brownian signature $\sigW[]$ and its derivatives can be verified directly.
\end{sqremark}

\begin{sqremark}
    In the expression~\eqref{eq:ito_proc_sig_der_coord}, the stochastic integrals are to be understood as standard Stratonovich integrals with predictable integrands, namely
    \begin{equation}\label{eq:stoch_int_correct}
        \left[
            \int_s^t 
            \big(\sigX[u] \otimes \D_s^i a_u^k \otimes \sigX[u]^{-1} \big)
            \circ d\widehat{W}_u^k
        \right] \otimes \sigX.
    \end{equation}
    However, we will often write expressions in which anticipative terms appear inside the integrals, as in \eqref{eq:ito_proc_sig_der_coord}. Such Stratonovich integrals with anticipative integrands can still be defined as limits of Riemann sums (see Appendix~\ref{sect:strato_diff} and \citet[Section~3.1]{nualart2006malliavin}).  Since the resulting integrals coincide, we will use~\eqref{eq:stoch_int_correct} and~\eqref{eq:ito_proc_sig_der_coord} interchangeably throughout the paper.
\end{sqremark}

\begin{corollary}\label{cor:brown_sig_def}
    If $X$ is a time-augmented Brownian motion $X_t = \widehat{W}_t$, then $a^k_t \equiv \word{k}$, so that $\D_s a^k_t \equiv 0$, and, for $\mathbb{P} \otimes ds \otimes dt\text{-almost every } (\omega, s, t) \in \Omega \times [0,T]^2$ with $s \leq t$,
    \begin{equation}\label{eq:malliavin_der_solution}
        \D_s^i \sig[t] = \sig[s] \otimes \word{i} \otimes \sig[s, t].
    \end{equation}
    Moreover, $\sigW \in \mathbb{D}^{\infty, 2}$ for a.e. $t \in [0, T]$, and,  for $\mathbb{P} \otimes dt^{\otimes (n+1)}\text{-almost every } (\omega, s_1, \ldots, s_n, t) \in \Omega \times [0,T]^{n+1}$ with $s_1 \leq \ldots \leq s_n  \leq t$, we have
    \begin{equation}\label{eq:higher_order_md}
        \D_{s_1}^{i_1} \cdots \D_{s_n}^{i_n} \sig[t]
        = \sig[s_1] \otimes \word{i_1} \otimes \sig[s_1, s_2]
          \otimes \word{i_2} \otimes \cdots \otimes \word{i_n} \otimes \sig[s_n, t].
    \end{equation}
\end{corollary}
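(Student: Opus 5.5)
For $X=\widehat{W}$ the coefficient matrix $A$ is constant: $a^0_t\equiv\word{0}$ and $a^k_t\equiv\word{k}$. I would first check that Theorem~\ref{thm:malliavindersig} applies, i.e. that $a^{\sigW[]}_u=(\sigW[u]\otimes\word{k})_{k=0,\dots,d}$ satisfies Assumption~\ref{assump:Strato_reg_assump} coordinatewise. The key observation is that, for each word $\word{v}$, the coordinate $(\sigW[u]\otimes\word{k})^{\word{v}}$ is either $0$ or $\sigW[u]^{\word{v}'}$ with $|\word{v}'|=|\word{v}|-1$. This suggests an induction on the word length $n=|\word{v}|$.

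The induction hypothesis at level $n$ is that, for all words of length at most $n$:
\begin{itemize}
\item $\sigW[u]^{\word{v}}\in\mathbb{D}^{\infty,2}$;
\item its Malliavin derivatives are given by \eqref{eq:higher_order_md};
\item all moments are uniformly bounded in $u\in[0,T]$;
\item the maps $u\mapsto \sigW[u]^{\word{v}}$ and $u\mapsto\D_{s_1}\cdots\D_{s_m}\sigW[u]^{\word{v}}$ are $L^2(\Omega)$-continuous off the diagonals.
\end{itemize}

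Moment bounds and continuity come from two facts: each coordinate lies in a finite sum of Wiener chaoses of order at most $n$, and it is a polynomial in iterated integrals, so hypercontractivity applies. Given these, conditions 1 and 2 of the assumption are immediate. For condition 3, the pierced formula at level $n-1$ gives explicit candidates:
\begin{align*}
(\D^{+}a)_s &= \sigW[s]\otimes\word{i}\otimes\emptyword\otimes\word{k}, & (\D^{-}a)_s &= 0.
\end{align*}
The limit $\D_s\sigW[u]\to\sigW[s]\otimes\word{i}$ as $u\downarrow s$ holds because $\sigW[s,u]\to\emptyword$ in every $L^p$, uniformly in $s$; this uses Chen's identity \eqref{eq:chen} and $L^p$ bounds of the form $\E|\sigW[s,u]^{\word{w}}|^p\lesssim |u-s|^{p|\word{w}|/2}$ for nonempty $\word{w}$. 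For $u<s$ the derivative is $\D_s\sigW[u]=0$ by adaptedness. Condition 4 is handled identically with one additional derivative.

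Once the assumption is verified, Theorem~\ref{thm:malliavindersig} with $\D_sA_u\equiv0$ kills the integral term and yields \eqref{eq:malliavin_der_solution}.

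The higher-order formula \eqref{eq:higher_order_md} then follows by iterating \eqref{eq:malliavin_der_solution} on each factor. To differentiate $\D_{s_n}^{i_n}\sigW[t]=\sigW[s_n]\otimes\word{i_n}\otimes\sigW[s_n,t]$ with respect to $\D_{s_{n-1}}$, where $s_{n-1}\le s_n$, I would use two facts. First, $\sigW[s_n,t]$ depends only on increments after $s_n$, so $\D_{s_{n-1}}\sigW[s_n,t]=0$. Second, the product rule applies coordinatewise, since the tensor product is bilinear and each coordinate is a finite sum of products. Applying \eqref{eq:malliavin_der_solution} to $\sigW[s_n]$ gives the pierced expression, and recursion over $n$ completes the formula. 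Because the formula involves only $\sigW$ on subintervals, each iterated derivative is again in $L^2$ with moments bounded uniformly over the simplex, so $\sigW[t]\in\mathbb{D}^{\infty,2}$.

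The main obstacle is the trace conditions \eqref{eq:strato_reg_assump} and \eqref{eq:strato_reg_assump_second}, which need continuity of $(s,u)\mapsto\D_s\sigW[u]$ near the diagonal, uniform in $s$. I would obtain this from Chen's identity together with the scaling bound on $\sigW[s,u]-\emptyword$ in $L^2$. The dominated convergence needed to integrate over $s$ then follows from the uniform moment bounds.
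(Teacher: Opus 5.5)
Your proposal is correct and follows the paper's proof. The first-order formula comes from Theorem~\ref{thm:malliavindersig} with $\D_s A_u\equiv 0$; the higher-order formula follows from the product rule together with the vanishing of $\D_{s_1}$ on the later increments $\sigW[s_k,s_{k+1}]$; and membership in $\mathbb{D}^{\infty,2}$ comes from moment bounds on the pierced signatures that are uniform over the simplex. The main difference is that you explicitly verify the hypotheses of Theorem~\ref{thm:malliavindersig} by induction on word length, using the trace candidates $(\D^{+,i}a)_s=\sigW[s]\otimes\word{ik}$ and $\D^{-}a=0$, whereas the paper takes these hypotheses for granted; you also obtain the moment bounds via finite chaos and hypercontractivity instead of the paper's independence-plus-diffusive-scaling estimate.
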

\begin{proof}
    The formula \eqref{eq:higher_order_md} follows from a recursive application of Theorem~\ref{thm:malliavindersig}. For $n = 1$, \eqref{eq:malliavin_der_solution} is given directly by Theorem~\ref{thm:malliavindersig}. Suppose now that \eqref{eq:higher_order_md} holds for $n - 1$. Then, applying the product rule for the Malliavin derivative, we obtain
    \begin{align}
        \D_{s_1}^{i_1} \cdots \D_{s_n}^{i_n} \sig[t] &= \D_{s_1}^{i_1}(\sig[s_2] \otimes \word{i_2} \otimes \sig[s_2, s_3]
          \otimes \word{i_3} \otimes \cdots \otimes \word{i_n} \otimes \sig[s_n, t]) \\
          &= (\D_{s_1}^{i_1}\sig[s_2]) \otimes \word{i_2} \otimes \sig[s_2, s_3]
          \otimes \word{i_3} \otimes \cdots \otimes \word{i_n} \otimes \sig[s_n, t] \\
          &= \sig[s_1] \otimes \word{i_1} \otimes \sig[s_1, s_2]
          \otimes \word{i_2} \otimes \sig[s_2, s_3]
          \otimes \word{i_3} \otimes \cdots \otimes \word{i_n} \otimes \sig[s_n, t],
    \end{align}
    where the second equality follows from the fact that $\sigW[s_k, s_{k+1}]$ is measurable with respect to $\sigma((W_u - W_{s_k}), {u \in [s_k, s_{k+1}]})$, so that $\D_{s_1}\sigW[s_k, s_{k+1}] = 0$ for all $k \geq 2$; see \cite[Corollary~1.2.1]{nualart2006malliavin}.

    It remains to prove the integrability of the Malliavin derivatives.
    Indeed, for all $\word{v} \in V$,
    \begin{align*}
        \E\left[(\D_{s_1}^{i_1} \cdots \D_{s_n}^{i_n} \sig[t]^{\word{v}})^2\right]
        &=\E\left[\left((\sig[s_1] \otimes \word{i_1} \otimes \sig[s_1, s_2]
          \otimes \word{i_2} \otimes \cdots \otimes \word{i_n} \otimes \sig[s_n, t])^{\word{v}}\right)^2\right] \\
        &= \E\left[\left(\sum_{\substack{\word{w_0}, \ldots, \word{w_n} \\ \word{w_0i_1w_1 \ldots w_n i_n} = \word{v}}}\sigW[s_1]^{\word{w_0}}\sigW[s_1, s_2]^{\word{w_1}}\ldots\sigW[s_n, t]^{\word{w_n}}\right)^2\right] \\
        &\leq \E\left[\left(\sum_{\substack{\word{w_0}, \ldots, \word{w_n} \\ |\word{w_0}|, \ldots, |\word{w_n}| < |\word{v}|}}|\sigW[s_1]^{\word{w_0}}\sigW[s_1, s_2]^{\word{w_1}}\ldots\sigW[s_n, t]^{\word{w_n}}|\right)^2\right] \\
        &= \E\left[\left(\sum_{|\word{w_0}| < |\word{v}|}|\sigW[s_1]^{\word{w_0}}|\right)^2\right]\ldots \E\left[\left(\sum_{|\word{w_n}| < |\word{v}|}|\sigW[s_n, t]^{\word{w_n}}|\right)^2\right] \\
        &\leq \E\left[\left(\sum_{|\word{w}| < |\word{v}|}|\sigW[T]^{\word{w}}|\right)^2\right]^{n+1} < \infty,
    \end{align*}
    where, in the last inequality, we used the diffusive scaling of the signature $(\sigW^{\word{v}})_{\word{v} \in V} \overset{d}{=} \left(t^{\frac{|\word{v}| + |\word{v}|_{\word{0}}}{2}}\sigW[1]^{\word{v}}\right)_{\word{v} \in V}$, where $|\word{v}|_{\word{0}}$ denotes the number of occurrences of the letter $\word{0}$ in the word $\word{v}$. Therefore, $\E\left[(\D_{s_1}^{i_1} \cdots \D_{s_n}^{i_n} \sig[t]^{\word{v}})^2\right]$
    is bounded uniformly in $s_1, \ldots, s_n, t \in [0, T]$ and $i_1, \ldots, i_n \in \{1, \ldots, d\}$, so that $\sigW[t] \in \mathbb{D}^{n, 2}$ for all $n \geq 1$.
\end{proof}

\section{Malliavin calculus for $\sigW[]$}\label{sect:malliavin_sigW}

We now focus on the signature of the time-augmented Brownian motion $\widehat{W}\colon\, t \mapsto (t, W_t)$. This object plays the role of Brownian motion on path space in the sense of \citep[Definition 1.2]{Baudoin2004} and hence inherits many of its nice properties as we will show in this section.

\subsection{Iterated integrals of Malliavin derivatives of $\sig$}

We have already seen that the higher-order Malliavin derivative of the Brownian signature reduces to the ``insertion'' of letters in Chen's identity, see \eqref{eq:higher_order_md}. Our goal is to show that this operation can be, in some sense, inverted by integrating $\D_{s_1}^{i_1} \cdots \D_{s_n}^{i_n} \sigW[t]$ over the simplex $0 \le s_1 \le \cdots \le s_n \le t$ with respect to the components of $d\widehat{W}$. 

In particular, this will allow us to show in Theorem~\ref{thm:iter_int_of_mal_der} that if $F = \bracketsigW{\bell} \in \mathcal{P}(\sigW)$, recall \eqref{eq:sig_polynom}, then
\[
    \int_{0 \le s_1 \le \cdots \le s_n \le t} \D_{s_1}^{i_1}\cdots \D_{s_n}^{i_n} \left\langle \bell, \sigW \right\rangle \circ d\widehat{W}_{s_1}^{j_1} \cdots \circ d\widehat{W}_{s_n}^{j_n} \in \mathcal{P}(\sigW),
\]
and its coefficients can be expressed explicitly in terms of $\bell$. This property will be crucial for deriving closed-form formulae for the Ornstein--Uhlenbeck semigroup and integration by parts.

We start with the simplest case $\sig[s]\otimes\word{i}\otimes\sig[s,t]$, which can be written as
\begin{equation}\label{eq:product_inserted}
    \sig[s]\otimes\word{i}\otimes\sig[s, t] 
    = \sum_{\word{v_1}, \word{v_2} \in V} \sig[s]^{\word{v_1}} \sig[s,t]^{\word{v_2}} (\word{v_1}\word{i}\word{v_2}).
\end{equation}
The behavior of the products $\sig[s]^{\word{v_1}}\sig[s,t]^{\word{v_2}}$ under integration with respect to $s$ is described in the following lemma, which holds for general path signatures.

\begin{lemma}\label{lem:inserted_int}
    Let $X = (X_t)_{t \in [0,T]}$ be an $\R^d$-valued continuous semimartingale. Then, for all $\word{v_1}, \word{v_2} \in V$ and $j \in \{1,\ldots,d\}$, 
    \begin{equation}\label{eq:sig_prod_int}
        \int_0^t \sigX[s]^{\word{v_1}} \sigX[s,t]^{\word{v_2}} \circ dX_s^j = \sigX[t]^{\word{v_1 j v_2}}, \quad t \in [0,T].
    \end{equation}
    Consequently, for $\bell_1, \bell_2 \in \TA$,
    \begin{equation}
        \int_0^t \bracketsigX[s]{\bell_1} \bracketsigX[s,t]{\bell_2} \circ dX_s^j = \bracketsigX{\bell_1 \otimes\word{j}\otimes \bell_2}.
    \end{equation}
\end{lemma}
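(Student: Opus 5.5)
The plan is to use Chen's identity to split $\sigX[t]^{\word{v_1 j v_2}}$ at the time where the letter $\word{j}$ is integrated. The subtlety is that the integrand $\sigX[s,t]^{\word{v_2}}$ is anticipative. I would therefore first rewrite it through the inverse signature, in the spirit of \eqref{eq:stoch_int_correct}, so that the integrand becomes adapted:
\[
\sigX[s,t] = \sigX[s]^{-1}\otimes\sigX[t].
\]
At tensor level, the claim then reads
\[
\int_0^t \sigX[s]\otimes \word{j}\otimes \sigX[s,t]\, \circ dX^j_s \;=\; \Big[\int_0^t \sigX[s]\otimes\word{j}\otimes\sigX[s]^{-1}\circ dX_s^j\Big]\otimes \sigX[t].
\]
Projected on the word $\word{v_1 j v_2}$, and with the coordinate sum restricted appropriately, this should give $\sigX[t]^{\word{v_1jv_2}}$. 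The cleaner route, which avoids anticipative integrals entirely, is to prove a tensor identity by the variation of constants formula.

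\textbf{Main step.} Fix $\word{v_1}$ and $\word{j}$. Define the $\eTA$-valued process
\[
\sigY[t] := \sum_{\word{v_2}} \sigX[t]^{\word{v_1 j v_2}}\, \word{v_2},
\]
that is, the "right-shifted" part of the signature after the prefix $\word{v_1 j}$. Differentiating with \eqref{eq:sig_dynamics} gives:
\begin{itemize}
\item for $\word{v_2}=\word{w i}$ nonempty, $d\sigX[t]^{\word{v_1 j w i}} = \sigX[t]^{\word{v_1 j w}}\circ dX^i_t$;
\item for $\word{v_2}=\emptyword$, $d\sigX[t]^{\word{v_1 j}} = \sigX[t]^{\word{v_1}}\circ dX_t^j$.
\end{itemize}
Hence
\[
d\sigY[t] = \sigY[t]\otimes\circ dX_t + d\bsigma_t, \qquad \bsigma_t := \Big(\int_0^t \sigX[s]^{\word{v_1}}\circ dX^j_s\Big)\emptyword,
\]
and $\sigY[0]=0$. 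Each coordinate of $\bsigma$ is a continuous semimartingale. Proposition~\ref{prop:variation_of_const} with $s=0$ then yields
\[
\sigY[t] = \int_0^t \sigX[u]^{\word{v_1}}\circ dX_u^j\;\sigX[u,t],
\]
where the integral is understood exactly as in that proposition. Taking the $\word{v_2}$ coordinate gives \eqref{eq:sig_prod_int}, with the left-hand side interpreted as in \eqref{eq:stoch_int_correct}, i.e. as
\[
\Big(\int_0^t \sigX[u]^{\word{v_1}}\,\sigX[u]^{-1}\circ dX^j_u\Big)\otimes\sigX[t],
\]
which is well defined as an adapted Stratonovich integral.

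\textbf{Linear extension.} The consequence follows by bilinearity. Write $\bell_1=\sum\bell_1^{\word{v_1}}\word{v_1}$ and $\bell_2=\sum \bell_2^{\word{v_2}}\word{v_2}$ (finite sums), and note that
\[
\bell_1\otimes\word{j}\otimes\bell_2 = \sum \bell_1^{\word{v_1}}\bell_2^{\word{v_2}}\,\word{v_1 j v_2}.
\]
Then multiply \eqref{eq:sig_prod_int} by the coefficients and sum.

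\textbf{Expected obstacle.} The main difficulty is justifying the interpretation of the integral with the anticipative factor $\sigX[s,t]^{\word{v_2}}$. I would handle it by the Remark's convention: pull $\sigX[t]$ out through $\sigX[s,t]=\sigX[s]^{-1}\otimes\sigX[t]$, which is exactly what the variation of constants formula produces. If one instead uses Riemann-sum Stratonovich limits, a short check is needed that the two definitions agree. This holds because $\sigX[t]$ is a fixed polynomial factor multiplying an adapted integrand.
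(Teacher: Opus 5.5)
Your proof is correct, but it takes a different route from the paper. The paper argues by induction on $|\word{v_2}|$. Writing $\word{v_2}=\word{v_2'k}$, it expands $\sigX[s,t]^{\word{v_2}}=\int_s^t\sigX[s,u]^{\word{v_2'}}\circ dX^k_u$ and exchanges the two Stratonovich integrals by Fubini. It then applies the induction hypothesis to the inner integral $\int_0^u\sigX[s]^{\word{v_1}}\sigX[s,u]^{\word{v_2'}}\circ dX^j_s$.

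You instead treat all suffixes $\word{v_2}$ at once. The shifted process $\sigY[t]=\sum_{\word{v_2}}\sigX[t]^{\word{v_1jv_2}}\word{v_2}$ solves \eqref{eq:G_val_sde} with forcing $\bsigma_t=\sigX[t]^{\word{v_1j}}\emptyword$ and $\sigY[0]=0$. Proposition~\ref{prop:variation_of_const} then gives the identity in one step; it is proved earlier and independently, so there is no circularity.

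Your route gains on exactly the delicate point. The left-hand side comes out directly in the adapted form \eqref{eq:stoch_int_correct}, so you need no Fubini theorem for Stratonovich integrals with anticipating integrands. The paper's swap does involve such integrands, since they depend on the outer variable, and it is used without further comment. Your reconciliation with the Riemann-sum definition is also sound: for a random variable $F$ that is constant in time, the Riemann sums of $F\,u$ are exactly $F$ times those of $u$.

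The paper's induction, in turn, is more elementary. It keeps the mechanism visible at the level of iterated integrals: the letter $\word{j}$ is inserted between $\word{v_1}$ and $\word{v_2}$ through one Fubini swap per letter of $\word{v_2}$.
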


\begin{proof}
    For $\word{v_1}, \word{v_2} \in V$, set 
    \(\sigY^{\word{v_1 j v_2}} := \int_0^t \sigX[s]^{\word{v_1}} \sigX[s,t]^{\word{v_2}} \circ dX_s^j\). 
    We proceed by induction on $|\word{v_2}|$ to show that
    \begin{equation}\label{eq:YX_eq}
        \sigY^{\word{v_1 j v_2}} = \sigX^{\word{v_1 j v_2}}, \quad t \in [0, T].
    \end{equation}
    If $\word{v_2} = \emptyword$, \eqref{eq:YX_eq} holds by definition of the signature. Otherwise, suppose that $\word{v_2} = \word{v_2' k}$ for some $\word{k} \in A$, and that \eqref{eq:YX_eq} holds for $\sigY^{\word{v_1 j v_2'}}$. Then,
    \begin{align*}
        \sigY^{\word{v_1 j v_2}} 
        &= \int_0^t \sigX[s]^{\word{v_1}} \sigX[s,t]^{\word{v_2}} \circ dX_s^j 
        = \int_0^t \sigX[s]^{\word{v_1}} \left( \int_s^t \sigX[s,u]^{\word{v_2'}} \circ dX_u^k \right) \circ dX_s^j \\
        &= \int_0^t \left( \int_0^u \sigX[s]^{\word{v_1}} \sigX[s,u]^{\word{v_2'}} \circ dX_s^j \right) \circ dX_u^k 
        = \int_0^t \sigY[u]^{\word{v_1 j v_2'}} \circ dX_u^k \\
        &= \int_0^t \sigX[u]^{\word{v_1 j v_2'}} \circ dX_u^k 
        = \sigX[t]^{\word{v_1 j v_2}},
    \end{align*}
    where we applied Fubini's theorem and the induction hypothesis. 
\end{proof}

Integrating \eqref{eq:product_inserted} and applying Lemma~\ref{lem:inserted_int}, we obtain
\begin{equation*}
    \int_0^t (\sig[s] \otimes \word{i} \otimes \sig[s, t]) \circ d\widehat{W}_s^j 
    = \sum_{\word{v_1}, \word{v_2} \in V} \sig[t]^{\word{v_1 j v_2}} (\word{v_1} \word{i} \word{v_2}).
\end{equation*}
This motivates the following definition.

\begin{definition}\label{def:psi_def}
    Fix $n \in \N$ and words $\word{w_1}, \ldots, \word{w_n} \in V$ and $\word{u_1}, \ldots, \word{u_n} \in V$. We define the switching operator 
    ${\bf\Psi}_{\word{u_1}, \ldots, \word{u_n}}^{\word{w_1}, \ldots, \word{w_n}}: \eTA \to \eTA$ by
    \begin{equation}\label{eq:psi_def}
        {\bf\Psi}_{\word{u_1}, \ldots, \word{u_n}}^{\word{w_1}, \ldots, \word{w_n}} \colon 
        \mathbbm{x} \mapsto \sum_{\word{v_0}, \ldots, \word{v_n}} \mathbbm{x}^{\word{v_0 w_1 v_1 \ldots w_n v_n}} 
        (\word{v_0} \word{u_1} \word{v_1} \ldots \word{u_n} \word{v_n}).
    \end{equation}
\end{definition}

Note that ${\bf\Psi}_{\word{u_1}, \ldots, \word{u_n}}^{\word{w_1}, \ldots, \word{w_n}}$ can be equivalently defined by its action on the basis vectors:
\begin{equation}
    {\bf\Psi}_{\word{u_1}, \ldots, \word{u_n}}^{\word{w_1}, \ldots, \word{w_n}} (\word{v}) 
    = \sum_{\substack{\word{v_0}, \ldots, \word{v_n} \\ \word{v_0 w_1 v_1 \ldots w_n v_n} = \word{v}}} 
      \word{v_0 u_1 v_1 \ldots u_n v_n}, \quad \word{v} \in V.
\end{equation}

Hence, the switching operator ${\bf\Psi}_{\word{u_1}, \ldots, \word{u_n}}^{\word{w_1}, \ldots, \word{w_n}}$ admits a natural interpretation: it finds all possible sequences of {consecutive} and {non-intersecting} sub-strings 
$\word{w_1}, \ldots, \word{w_n}$ in the word $\word{v}$, and replaces them with $\word{u_1}, \ldots, \word{u_n}$. 
The result is the sum of all such modified words. In particular, ${\bf \Psi}^{\word{j}}_{\word{i}}$ replaces each occurrence of the letter $\word{j}$ by the letter $\word{i}$, one at a time. 
For example,
\[
    {\bf \Psi}^{\word{1}}_{\word{0}} (\word{01101}) = \word{00101} + \word{01001} + \word{01100}.
\]

The switching operator allows us to rewrite compactly the iterated integrals we are interested in.

\begin{theorem}\label{thm:iterated_psi}
    Let $X = (X_t)_{t \in [0, T]}$ be an $\R^d$-valued continuous semimartingale and let $\sigX[]$ denote its signature. Then,
    \begin{equation}\label{eq:iterated_md_int}
        \int_{0 \le s_1 \le \ldots \le s_n \le t} 
        \Big( \sigX[s_1] \otimes \word{i_1} \otimes \sigX[s_1, s_2] \otimes \ldots \otimes \word{i_n} \otimes \sigX[s_n, t] \Big) 
        \circ dX_{s_1}^{j_1} \ldots \circ dX_{s_n}^{j_n} 
        = {\bf\Psi}_{\word{i_1}, \ldots, \word{i_n}}^{\word{j_1}, \ldots, \word{j_n}} (\sigX[t]).
    \end{equation}
\end{theorem}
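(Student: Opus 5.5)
Expand the pierced product coordinatewise and reduce to a scalar identity generalizing Lemma~\ref{lem:inserted_int}. By definition of the tensor product,
\[
\sigX[s_1] \otimes \word{i_1} \otimes \sigX[s_1, s_2] \otimes \cdots \otimes \word{i_n} \otimes \sigX[s_n, t]
= \sum_{\word{v_0}, \ldots, \word{v_n} \in V} \sigX[s_1]^{\word{v_0}} \sigX[s_1,s_2]^{\word{v_1}} \cdots \sigX[s_n,t]^{\word{v_n}}\, (\word{v_0 i_1 v_1 \ldots i_n v_n}).
\]
By \eqref{eq:psi_def}, the right-hand side of \eqref{eq:iterated_md_int} equals $\sum_{\word{v_0},\ldots,\word{v_n}} \sigX[t]^{\word{v_0 j_1 v_1 \ldots j_n v_n}} (\word{v_0 i_1 v_1\ldots i_n v_n})$. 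Each coordinate of the extended tensor algebra receives contributions from only finitely many tuples $(\word{v_0},\ldots,\word{v_n})$, so it suffices to prove the scalar identity
\begin{equation*}
\int_{0\le s_1\le\cdots\le s_n\le t} \sigX[s_1]^{\word{v_0}} \sigX[s_1,s_2]^{\word{v_1}} \cdots \sigX[s_n,t]^{\word{v_n}} \circ dX^{j_1}_{s_1}\cdots\circ dX^{j_n}_{s_n} = \sigX[t]^{\word{v_0 j_1 v_1\ldots j_n v_n}}
\end{equation*}
for all words $\word{v_0},\ldots,\word{v_n}$. We interpret the iterated integral as nested Stratonovich integrals, with innermost variable $s_1$ and outermost $s_n$.

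The scalar identity follows by induction on $n$. The case $n=1$ is exactly Lemma~\ref{lem:inserted_int}. For the inductive step, write the integrand as $\big(\sigX[s_1]^{\word{v_0}}\sigX[s_1,s_2]^{\word{v_1}}\cdots\sigX[s_{n-1},s_n]^{\word{v_{n-1}}}\big)\cdot \sigX[s_n,t]^{\word{v_n}}$. The last factor depends only on $s_n$, so it can be pulled out of the inner $(n-1)$-fold integral over $0\le s_1\le\cdots\le s_{n-1}\le s_n$. By the induction hypothesis applied on $[0,s_n]$, that inner integral equals $\sigX[s_n]^{\word{w}}$ with $\word{w}=\word{v_0 j_1 v_1\ldots j_{n-1}v_{n-1}}$. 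The remaining outer integral is then $\int_0^t \sigX[s_n]^{\word{w}}\sigX[s_n,t]^{\word{v_n}}\circ dX^{j_n}_{s_n}$, which Lemma~\ref{lem:inserted_int} identifies as $\sigX[t]^{\word{w j_n v_n}}$. This closes the induction.

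The main obstacle is justifying the manipulations in this inductive step. The factor $\sigX[s_n,t]^{\word{v_n}}$ is anticipating with respect to the $ds_n$-integration. We handle this exactly as in the proof of Lemma~\ref{lem:inserted_int}: $\sigX[s_n,t]^{\word{v_n}}=\sigX[t]^{\word{\cdot}}$-type expressions expand via Chen's identity \eqref{eq:chen} into finite sums of products $\sigX[s_n]^{-1}$-coordinates times $\sigX[t]$-coordinates, following the convention \eqref{eq:stoch_int_correct}. This lets us pull the $\mathcal F_t$-measurable factors outside the integral and apply the stochastic Fubini argument already used in the lemma.

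Finally, we sum the scalar identities over $(\word{v_0},\ldots,\word{v_n})$, using coordinatewise finiteness, to conclude \eqref{eq:iterated_md_int}.
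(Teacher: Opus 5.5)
Your proposal is correct and is essentially the paper's argument: expand the pierced product coordinatewise, match it against the definition of ${\bf\Psi}_{\word{i_1},\ldots,\word{i_n}}^{\word{j_1},\ldots,\word{j_n}}$, and collapse the simplex integral by applying Lemma~\ref{lem:inserted_int} $n$ times, which your induction on $n$ makes explicit. Your treatment of anticipating integrands agrees with the paper's convention \eqref{eq:stoch_int_correct}, though the proof of Lemma~\ref{lem:inserted_int} itself proceeds by induction on $|\word{v_2}|$ with Fubini, not by a Chen-identity expansion as you describe it.
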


\begin{proof}
     It follows directly from the definition of the tensor product that
    \[
        \sigX[s_1] \otimes \word{i_1} \otimes \sigX[s_1, s_2] \otimes \ldots \otimes \word{i_n} \otimes \sigX[s_n, t]
        = \sum_{\word{v_0}, \ldots, \word{v_n} \in V} 
        \sigX[s_1]^{\word{v_0}} \sigX[s_1, s_2]^{\word{v_1}} \ldots \sigX[s_n, t]^{\word{v_n}} 
        (\word{v_0 i_1 v_1 \ldots i_n v_n}).
    \]
    Integrating and applying Lemma~\ref{lem:inserted_int} $n$ times concludes the proof.
\end{proof}

\begin{sqremark}
    Theorem~\ref{thm:iterated_psi} still applies when the order of the iterated integral differs from the number of ``holes'' in the ``pierced'' signature. More precisely, if the integral is taken only over a subset of consecutive variables $s_{k_0 + 1} \le \ldots \le s_{k_0 + m}$ for some $0 \le k_0 \le \ldots \le k_0 + m \le n$, we have
    \begin{align*}
        &\int_{s_{k_0} \le s_{k_0 + 1} \le \ldots \le s_{k_0 + m} \le s_{k_0 + m + 1}}
        \Big( \sigX[s_1] \otimes \word{i_1} \otimes \sigX[s_1, s_2] \otimes \ldots \otimes \word{i_n} \otimes \sigX[s_n, t] \Big) 
        \circ dX_{s_{k_0+1}}^{j_{k_0+1}} \ldots \circ dX_{s_{k_0 + m}}^{j_{k_0 + m}} \\
        &= \sigX[s_1] \otimes \word{i_1} \otimes \ldots \otimes \word{i_{k_0}} \otimes 
        {\bf \Psi}_{\word{i_{k_0 + 1}}, \ldots, \word{i_{k_0 + m}}}^{\word{j_{k_0 + 1}}, \ldots, \word{j_{k_0 + m}}} (\sigX[s_{k_0}, s_{k_0 + m + 1}]) 
        \otimes \word{i_{k_0 + m + 1}} \otimes \ldots \otimes \word{i_n} \otimes \sigX[s_n, t],
    \end{align*}
    where we set $s_0 = 0$ and $s_{n + 1} = t$.
\end{sqremark}

We will also be interested in computing the linear functionals of objects of the form \eqref{eq:iterated_md_int} and to write them as linear functionals of $\sigX$. In other words, we would like to compute the adjoint of ${\bf \Psi}_{\word{i_1, \ldots, i_n}}^{\word{j_1, \ldots, j_n}}$. The following proposition answers this question.

\begin{proposition}\label{prop:psi_adjoint}
    The adjoint operator $({\bf\Psi}_{\word{u_1}, \ldots, \word{u_n}}^{\word{w_1}, \ldots, \word{w_n}})^*$ is given by
    \begin{equation}
        ({\bf\Psi}_{\word{u_1}, \ldots, \word{u_n}}^{\word{w_1}, \ldots, \word{w_n}})^* = {\bf\Psi}^{\word{u_1}, \ldots, \word{u_n}}_{\word{w_1}, \ldots, \word{w_n}}.
    \end{equation}
    That is, for $\bell \in \TA$, we have
    \begin{equation*}
        \langle \bell,\, {\bf\Psi}_{\word{u_1}, \ldots, \word{u_n}}^{\word{w_1}, \ldots, \word{w_n}} (\sigX) \rangle 
        = \bracketsigX{ {\bf\Psi}^{\word{u_1}, \ldots, \word{u_n}}_{\word{w_1}, \ldots, \word{w_n}} (\bell) }.
    \end{equation*}
\end{proposition}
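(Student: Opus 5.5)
Since both sides are linear in $\bell$, we reduce to $\bell = \word{v}$ a single basis word, and then to checking the identity coefficientwise in $\mathbbm{x} = \sigX$. The argument is purely combinatorial and does not use that $\mathbbm{x}$ is a signature. We prove $\langle \bell, {\bf\Psi}_{\word{u_1}, \ldots, \word{u_n}}^{\word{w_1}, \ldots, \word{w_n}}(\mathbbm{x})\rangle = \langle {\bf\Psi}^{\word{u_1}, \ldots, \word{u_n}}_{\word{w_1}, \ldots, \word{w_n}}(\bell), \mathbbm{x}\rangle$ for every $\mathbbm{x} \in \eTA$.

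First, both sides are well defined for $\bell \in \TA$. The operator ${\bf\Psi}^{\word{u_1}, \ldots, \word{u_n}}_{\word{w_1}, \ldots, \word{w_n}}$ maps a word $\word{v}$ to a finite sum, since $\word{v}$ has only finitely many decompositions $\word{v} = \word{v_0 u_1 v_1 \ldots u_n v_n}$. Hence it maps $\TA$ into $\TA$. On the left-hand side, the pairing with $\word{v}$ extracts the $\word{v}$-coefficient of ${\bf\Psi}_{\word{u_1}, \ldots, \word{u_n}}^{\word{w_1}, \ldots, \word{w_n}}(\mathbbm{x})$. By \eqref{eq:psi_def}, this coefficient is
\[
\sum_{\substack{(\word{v_0}, \ldots, \word{v_n}) \\ \word{v_0 u_1 v_1 \ldots u_n v_n} = \word{v}}} \mathbbm{x}^{\word{v_0 w_1 v_1 \ldots w_n v_n}}.
\]
This sum is finite, and it correctly accounts for multiplicities: distinct tuples $(\word{v_0}, \ldots, \word{v_n})$ can produce the same output word, and each tuple contributes its own term.

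On the right-hand side, the basis-vector description of $\Psi$ with the roles of $\word{u}$ and $\word{w}$ exchanged gives
\[
{\bf\Psi}^{\word{u_1}, \ldots, \word{u_n}}_{\word{w_1}, \ldots, \word{w_n}}(\word{v}) = \sum_{\word{v_0 u_1 v_1 \ldots u_n v_n} = \word{v}} \word{v_0 w_1 v_1 \ldots w_n v_n}.
\]
Pairing with $\mathbbm{x}$ produces exactly the same sum over the same index set of tuples. This proves the identity.

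The only point that needs care is the bookkeeping of multiplicities. We index both sums by tuples $(\word{v_0}, \ldots, \word{v_n})$ rather than by resulting words. The two sums then run over the same tuples, so they match term by term and no multiplicity is lost. Finally, we specialize to $\mathbbm{x} = \sigX$ to obtain the statement.
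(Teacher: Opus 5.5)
Your proof is correct and follows the same route as the paper. Both arguments write the pairing as a single sum over tuples $(\word{v_0},\ldots,\word{v_n})$ of $\bell^{\word{v_0 u_1 v_1 \ldots u_n v_n}}\,\mathbbm{x}^{\word{v_0 w_1 v_1 \ldots w_n v_n}}$ and read that sum in both directions; your reduction to a basis word and the remark about indexing by tuples rather than by output words only spell out the paper's one-line computation.
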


\begin{proof}
    The proof follows from observing that
    \begin{equation*}
        \langle \bell,\, {\bf\Psi}_{\word{u_1}, \ldots, \word{u_n}}^{\word{w_1}, \ldots, \word{w_n}} (\sigX) \rangle
        = \sum_{\word{v_0}, \ldots, \word{v_n}} \bell^{\word{v_0 u_1 v_1 \ldots u_n v_n}} \sigX^{\word{v_0 w_1 v_1 \ldots w_n v_n}} 
        = \bracketsigX{{\bf\Psi}^{\word{u_1}, \ldots, \word{u_n}}_{\word{w_1}, \ldots, \word{w_n}} (\bell) }.
    \end{equation*}
\end{proof}
Theorem~\ref{thm:iterated_psi} and Proposition~\ref{prop:psi_adjoint} imply immediately the following result.

\begin{theorem}\label{thm:iter_int_of_mal_der}
    For all $i_1, \ldots, i_n \in \{1, \ldots, d\}$ and $j_1, \ldots, j_n \in \{0, 1, \ldots, d\}$, we have
    \begin{equation*}
        \int_{0 \le s_1 \le \ldots \le s_n \le t} 
        (\D_{s_1}^{i_1} \ldots \D_{s_n}^{i_n} \sig) \circ d\widehat{W}_{s_1}^{j_1} \ldots \circ d\widehat{W}_{s_n}^{j_n} 
        = {\bf \Psi}_{\word{i_1, \ldots, i_n}}^{\word{j_1, \ldots, j_n}} (\sig).
    \end{equation*}
    Moreover, for all $\bell \in \TA[d+1]$,
    \begin{equation*}
        \int_{0 \le s_1 \le \ldots \le s_n \le t} 
        \D_{s_1}^{i_1} \ldots \D_{s_n}^{i_n} \langle \bell, \sig \rangle 
        \circ d\widehat{W}_{s_1}^{j_1} \ldots \circ d\widehat{W}_{s_n}^{j_n} 
        = \bracketsigW{ {\bf \Psi}^{\word{i_1 \ldots i_n}}_{\word{j_1 \ldots j_n}} (\bell) }.
    \end{equation*}
\end{theorem}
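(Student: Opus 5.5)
The plan is to combine Corollary~\ref{cor:brown_sig_def} with Theorem~\ref{thm:iterated_psi} and Proposition~\ref{prop:psi_adjoint}, applied to the time-augmented path $X = \widehat{W}$ over the alphabet $\{\word{0}, \word{1}, \ldots, \word{d}\}$.

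First, Corollary~\ref{cor:brown_sig_def}, formula \eqref{eq:higher_order_md}, gives, for almost every ordered tuple $0 \le s_1 \le \cdots \le s_n \le t$,
\begin{equation*}
\D_{s_1}^{i_1} \cdots \D_{s_n}^{i_n} \sig[t] = \sig[s_1] \otimes \word{i_1} \otimes \sig[s_1,s_2] \otimes \cdots \otimes \word{i_n} \otimes \sig[s_n,t].
\end{equation*}
Since the iterated integral only runs over the ordered simplex, we may replace the integrand by this ``pierced'' signature. Theorem~\ref{thm:iterated_psi} with $X = \widehat{W}$ then gives ${\bf\Psi}^{\word{j_1},\ldots,\word{j_n}}_{\word{i_1},\ldots,\word{i_n}}(\sig)$. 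This is the first claim, with the switching operator written in the compact notation of the statement.

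For the second claim, fix $\bell \in \TA[d+1]$. The pairing $\langle \bell, \cdot\rangle$ involves only finitely many coordinates, so by linearity of the Malliavin derivative and of the integrals it commutes with both operations. The left-hand side therefore equals
\begin{equation*}
\big\langle \bell, {\bf\Psi}^{\word{j_1},\ldots,\word{j_n}}_{\word{i_1},\ldots,\word{i_n}}(\sig)\big\rangle .
\end{equation*}
Proposition~\ref{prop:psi_adjoint} turns this into $\bracketsigW{{\bf\Psi}_{\word{j_1},\ldots,\word{j_n}}^{\word{i_1},\ldots,\word{i_n}}(\bell)}$, which is the second claim.

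The only delicate point is the meaning of the integral. The integrand is anticipative, since it involves $\sig[s_k,s_{k+1}]$ and $\sig[s_n,t]$. By the remark after Theorem~\ref{thm:malliavindersig}, I interpret it via the identity \eqref{eq:stoch_int_correct}, factoring out the future parts with Chen's identity. This is exactly the sense in which Lemma~\ref{lem:inserted_int} was proved (by Fubini and induction). The Corollary's identity holds only almost everywhere in $(s_1,\ldots,s_n)$, but modifying the integrand on a null set does not change the iterated integrals, so that identity is enough.
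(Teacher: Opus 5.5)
Your proposal is correct and follows the paper's route: substitute the pierced formula \eqref{eq:higher_order_md} from Corollary~\ref{cor:brown_sig_def}, apply Theorem~\ref{thm:iterated_psi} with $X=\widehat{W}$, and pass to linear functionals via Proposition~\ref{prop:psi_adjoint}. The paper simply calls the result immediate; your added remarks on reading the anticipative integrand through \eqref{eq:stoch_int_correct} and on null-set modifications are consistent with its conventions and go slightly beyond what it spells out.
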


\subsection{Skorokhod integration and the representation of random variables in $\mathcal{P}(\sigW[T])$}\label{sect:skorokhod_and_repr}
We now illustrate how the derived results can be applied to obtain explicit representations of the (anticipative) Skorokhod integral, the Clark--Ocone formula, and the chaos expansion for random variables $F \in \mathcal{P}(\sigW[T])$.

\paragraph{Skorokhod integration.}

The Skorokhod integral $\delta$ is defined by the dual relationship
\begin{align}
\label{def:integ_skorohod_dual}
\mathbb{E}\left[\int_{0}^{T} \langle \mathbf{D}_{t}F, h_{t} \rangle_{\R^{d}}\, dt\right] = \mathbb{E}[F\,\delta(h)],
\end{align}
for $F \in \mathbb{D}^{1,2}$ and $h \in \mathrm{Dom}(\delta)$, where
\[
\mathrm{Dom}(\delta) := \left\{ h \in L^{2}(\Omega \times [0,T], \R^{d})\colon\ \left| \mathbb{E}\left[\int_{0}^{T} \langle \mathbf{D}_{t}F , h_{t} \rangle_{\R^{d}}\, dt \right] \right| \leqslant C \Vert F \Vert_{L^{2}(\Omega)},\ \forall F \in \mathbb{D}^{1,2}\right\}.
\]
Moreover, for $h=(h^{1},\ldots,h^{d}) \in \mathrm{Dom}(\delta)$ we may write
\begin{align*}
\delta(h)=\sum_{j=1}^{d} \delta^{j}(h^{j}),
\end{align*}
where, for each $j \in \{1,\ldots,d\}$ and all $F \in \mathbb{D}^{1,2}$,
\begin{align*}
\mathbb{E}\left[\int_{0}^{T} \mathbf{D}^{j}_{t} F \, h^{j}_{t}\, dt\right] = \mathbb{E}\left[F\, \delta^{j}(h^{j})\right].
\end{align*}

If $F \in \mathbb{D}^{1,2}$ and $h \in \mathrm{Dom}(\delta)$ are such that $Fh \in L_{2}(\Omega \times [0,T], \R^{d})$, then $Fh \in \mathrm{Dom}(\delta)$ and the following equality \citep[Proposition 1.3.3]{nualart2006malliavin} holds:
\begin{align}\label{eq:IBP_formula}
\delta(Fh) = F\,\delta(h) - \int_{0}^{T} \langle \mathbf{D}_{s}F , h_{s} \rangle_{\R^{d}}\, ds,
\end{align}
provided the right-hand side is square integrable. This equality plays an important role, in particular, for proving the integration by parts results discussed in Section~\ref{sect:ibp}.

We show that the Skorokhod integral of $\sigW[T]$, and hence of any random variable in $\mathcal{P}(\sigW[T])$, can be expressed using the switching operator ${\bf \Psi}^{\word{0}}_{\word{i}}$.

\begin{proposition}\label{Prop:skorokhod}
    For $i = 1, \ldots, d$, we have
    \begin{equation}\label{eq:skorokhod_sig}
        \delta^i(\sigW[T]) = W_T^i \sigW[T] - {\bf \Psi}^{\word{0}}_{\word{i}}(\sigW[T]).
    \end{equation}
    Moreover, if $\bell \in \TA[d+1]$, then
    \begin{equation}\label{eq:skorokhod_lin_func}
        \delta^i\left( \bracketsigW[T]{\bell} \right) = \bracketsigW[T]{\bell \shuprod \word{i} - {\bf \Psi}^{\word{i}}_{\word{0}}(\bell)}.
    \end{equation}
\end{proposition}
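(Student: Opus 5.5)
Here $\sigW[T]$ is regarded as the constant-in-time process $h_s\equiv\sigW[T]$, $s\in[0,T]$, understood elementwise. The natural tool is the product formula \eqref{eq:IBP_formula} with $F=\sigW[T]^{\word v}$ and the deterministic process $h=\indic{[0,T]}e_i$. Then $\delta^i(h)=W^i_T$. Corollary~\ref{cor:brown_sig_def} gives $\sigW[T]^{\word v}\in\mathbb{D}^{1,2}$ with square-integrable derivatives, and $Fh\in L^2(\Omega\times[0,T])$ trivially. Hence
\begin{equation*}
\delta^i(\sigW[T]) = W_T^i\,\sigW[T] - \int_0^T \D^i_s \sigW[T]\,ds .
\end{equation*}
The right-hand side is square integrable because all signature moments are finite (Gaussian chaos of finite order). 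So only the last integral needs to be identified.

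By \eqref{eq:malliavin_der_solution}, $\D^i_s\sigW[T]=\sigW[s]\otimes\word i\otimes\sigW[s,T]$. Since $ds=d\widehat W^0_s$ and a Stratonovich integral against a finite-variation path is an ordinary Lebesgue integral, Theorem~\ref{thm:iterated_psi} (equivalently Lemma~\ref{lem:inserted_int}) with $n=1$, $j_1=\word0$ applies. It gives
\begin{equation*}
\int_0^T \sigW[s]\otimes\word i\otimes\sigW[s,T]\,ds={\bf\Psi}^{\word0}_{\word i}(\sigW[T]),
\end{equation*}
which proves \eqref{eq:skorokhod_sig}. The one point to check is that the integrand is anticipating. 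Here no stochastic integration theory is needed: Lemma~\ref{lem:inserted_int} is proved with Fubini on $d s$ against the iterated integrals defining $\sigW[s,T]^{\word{v_2}}$. Alternatively, one can use the adapted form \eqref{eq:stoch_int_correct}.

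For \eqref{eq:skorokhod_lin_func}, linearity of $\delta^i$ reduces the claim to finite sums of coordinates, so I pair \eqref{eq:skorokhod_sig} with $\bell\in\TA[d+1]$. Since $W^i_T=\bracketsigW[T]{\word i}$, the shuffle property (Proposition~\ref{prop:shuffle}) gives $W^i_T\bracketsigW[T]{\bell}=\bracketsigW[T]{\bell\shuprod\word i}$. The adjoint formula of Proposition~\ref{prop:psi_adjoint} gives $\langle\bell,{\bf\Psi}^{\word0}_{\word i}(\sigW[T])\rangle=\bracketsigW[T]{{\bf\Psi}^{\word i}_{\word0}(\bell)}$, which yields the result.

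The only delicate step is the first: justifying \eqref{eq:IBP_formula} for each coordinate, i.e.\ that $\sigW[T]^{\word v}\,e_i\indic{[0,T]}\in\mathrm{Dom}(\delta)$. This follows from $\sigW[T]^{\word v}\in\mathbb{D}^{1,2}$ together with $L^2$-integrability of the right-hand side, both supplied by Corollary~\ref{cor:brown_sig_def}.
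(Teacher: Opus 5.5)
Your proposal is correct and follows the paper's proof essentially step for step. You apply \eqref{eq:IBP_formula} coordinatewise with $F=\sigW[T]$ and $h\equiv 1$, then identify $\int_0^T \D^i_t\sigW[T]\,dt={\bf\Psi}^{\word0}_{\word i}(\sigW[T])$ via the iterated-integral result (the paper cites Theorem~\ref{thm:iter_int_of_mal_der}, which is Theorem~\ref{thm:iterated_psi} specialized to $\widehat W$), and you pass to linear functionals using the shuffle property and Proposition~\ref{prop:psi_adjoint}. Your additional checks on $\mathrm{Dom}(\delta)$ and square integrability make explicit conditions that the paper leaves implicit, but they do not change the argument.
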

\begin{proof}
    Applying the integration-by-parts formula \eqref{eq:IBP_formula} element-wise with $F = \sigW[T]$ and $h_t \equiv 1$ yields
    \begin{align*}
         \delta^i(\sigW[T]) 
         &= W_T^i \sigW[T] - \int_0^T \D_t^i \sigW[T]\, dt \\
         &= W_T^i \sigW[T] - {\bf \Psi}^{\word{0}}_{\word{i}}(\sigW[T]),
    \end{align*}
    where we used Theorem~\ref{thm:iter_int_of_mal_der} to compute the integral.  
    Equation~\eqref{eq:skorokhod_lin_func} then follows from Proposition~\ref{prop:shuffle} and Proposition~\ref{prop:psi_adjoint}.
\end{proof}

\paragraph{Clark--Ocone formula.}
The classical Clark--Ocone formula refines the martingale representation theorem. It states that any $F \in \mathbb{D}^{1, 2}$ admits the representation
\begin{equation}\label{eq:clark-ocone}
    F = \E[F] + \sum_{i=1}^d \int_0^T \E[\D_t^i F \, | \, \F_t^W]\, dW_t^i.
\end{equation}
If the random variable $F$ lies in $\mathcal{P}(\sigW[T])$, the integrand in \eqref{eq:clark-ocone} can be computed explicitly.
\begin{proposition}
    Let $F = \bracketsig[T]{\bell}$ with $\bell \in \TA[d+1]$. Then, $F \in \mathbb{D}^{1, 2}$ and
    \begin{equation}\label{eq:clark-ocone_sig}
        F = \left\langle \bell, \widehat{\mathcal{E}}_{T} \right\rangle 
        + \sum_{i=1}^d \int_0^T 
        \left\langle \bell, \sig[t] \otimes \word{i} \otimes \widehat{\mathcal{E}}_{T - t} \right\rangle \, dW_t^i,
    \end{equation}    
    where $\widehat{\mathcal{E}}_{t}$ denotes the expected signature given by \eqref{eq:brownian_esig}.
\end{proposition}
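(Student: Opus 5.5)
The plan is to plug the explicit derivative from Corollary~\ref{cor:brown_sig_def} into the Clark--Ocone formula \eqref{eq:clark-ocone} and evaluate the conditional expectation using independence of increments.

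\textbf{Step 1: differentiability.} By Corollary~\ref{cor:brown_sig_def}, every coordinate $\sigW[T]^{\word{v}}$ belongs to $\mathbb{D}^{\infty,2}$. Since $\bell\in\TA[d+1]$ has finitely many nonzero coefficients, $F=\bracketsig[T]{\bell}$ is a finite linear combination of such coordinates. Hence $F\in\mathbb{D}^{1,2}$ and \eqref{eq:clark-ocone} applies.

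\textbf{Step 2: the derivative.} By linearity and \eqref{eq:malliavin_der_solution},
\[
\D^i_t F=\langle \bell,\sigW[t]\otimes\word{i}\otimes\sigW[t,T]\rangle \quad \text{for a.e. } t\in[0,T].
\]
Expanding in coordinates, this is a finite sum
\[
\sum_{\word{v_1 i v_2}} \bell^{\word{v_1 i v_2}}\,\sigW[t]^{\word{v_1}}\,\sigW[t,T]^{\word{v_2}},
\]
where the sum runs over all decompositions of words $\word{v}$ appearing in $\bell$ as $\word{v}=\word{v_1 i v_2}$.

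\textbf{Step 3: the conditional expectation.} In each term, $\sigW[t]^{\word{v_1}}$ is $\F^W_t$-measurable. The factor $\sigW[t,T]^{\word{v_2}}$ is a functional of $(W_u-W_t)_{u\in[t,T]}$, which is independent of $\F^W_t$. The time-augmented Brownian motion has stationary increments, so $\sigW[t,T]\overset{d}{=}\sigW[T-t]$. Integrability holds because signature coordinates have moments of all orders, by the scaling argument in the proof of Corollary~\ref{cor:brown_sig_def}. Therefore
\[
\E[\sigW[t]^{\word{v_1}}\sigW[t,T]^{\word{v_2}}\mid\F^W_t]=\sigW[t]^{\word{v_1}}\,\widehat{\mathcal{E}}_{T-t}^{\word{v_2}}.
\]
Reassembling the finite sum gives
\[
\E[\D^i_tF\mid\F^W_t]=\langle\bell,\sigW[t]\otimes\word{i}\otimes\widehat{\mathcal{E}}_{T-t}\rangle .
\]

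\textbf{Step 4: the constant term.} Likewise $\E[F]=\langle\bell,\widehat{\mathcal{E}}_T\rangle$ by linearity, with $\widehat{\mathcal{E}}_T$ given explicitly by \eqref{eq:brownian_esig}. Substituting Steps 3 and 4 into \eqref{eq:clark-ocone} yields \eqref{eq:clark-ocone_sig}.

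The only delicate point is the a.e.-in-$t$ identification in Step 3: the conditional expectation must be taken of a version of $\D_tF$ that is jointly measurable in $(\omega,t)$. This is unproblematic here because the right-hand side of \eqref{eq:malliavin_der_solution} is a continuous process in $t$, and the resulting integrand is adapted and square integrable uniformly in $t$. The rest is bookkeeping.
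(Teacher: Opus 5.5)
Your proposal is correct and takes essentially the same route as the paper. The paper also inserts \eqref{eq:malliavin_der_solution} into the Clark--Ocone formula and evaluates $\E[\D_t^i F\mid\F_t^W]$ using that $\sigW[t]$ is $\F_t^W$-measurable while $\sigW[t,T]$ is independent of $\F_t^W$ with $\sigW[t,T]\overset{d}{=}\sigW[T-t]$. Your Step~1 (membership in $\mathbb{D}^{1,2}$ via Corollary~\ref{cor:brown_sig_def}), the constant term $\E[F]=\langle\bell,\widehat{\mathcal{E}}_T\rangle$, and your integrability and measurability remarks simply spell out what the paper's short proof leaves implicit.
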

\begin{proof}
Equation~\eqref{eq:malliavin_der_solution} yields
\begin{align}
    \E[\D_t^i F \, | \, \F_t^W] 
    = \E\left[
        \left\langle \bell, \sig[t] \otimes \word{i} \otimes \sig[t, T] \right\rangle 
        \Big| \F_t^W
      \right]
    = \left\langle 
        \bell, 
        \sig[t] \otimes \word{i} \otimes \widehat{\mathcal{E}}_{T - t}
      \right\rangle,
\end{align}
since $\sigW[t]$ is $\F_t^W$-measurable and $\sigW[t, T]$ is independent of $\F_t^W$, with $\sigW[t, T] \overset{d}{=} \sigW[T - t]$.
\end{proof}

\paragraph{Chaos expansion.}
The chaos expansion \citep[Theorem 1.1.2]{nualart2006malliavin} expresses any square-integrable random variable $F \in L^2(\Omega, \F^W, \P)$ as an infinite series of iterated Wiener--Itô integrals:
$$
F = \E[F] + \sum_{n \geq 1} n! 
    \sum_{\word{i_1 \ldots i_n} \in V_n}\ \,
    \idotsint\limits_{0 < s_1 < \ldots < s_n < t}
    f_{\word{i_1\ldots i_n}}(s_1, \ldots, s_n)\,
    dW_{s_1}^{i_1} \cdots dW_{s_n}^{i_n}.
$$
Moreover, when $F$ is infinitely Malliavin differentiable (i.e.\ $F \in \mathbb{D}^{\infty,2}$), the kernel functions $f_{\word{i_1\ldots i_n}}$ can be computed \citep[Exercise 1.2.6]{nualart2006malliavin} via
$$
f_{\word{i_1\ldots i_n}}(s_1, \ldots, s_n)
:= \frac{1}{n!}\,
\mathbb{E}\!\left[
    \D_{s_1}^{i_1} \cdots \D_{s_n}^{i_n} F
\right],
\qquad 
\word{i_1\ldots i_n} \in V \setminus \{ \word{0} \}.
$$
 
It follows from Corollary~\ref{cor:brown_sig_def} that $\mathcal{P}(\sigW[T]) \subset \mathbb{D}^{\infty, 2}$. 
The following proposition shows that for $F \in \mathcal{P}(\sigW[T])$, the kernel functions can be expressed explicitly.
\begin{proposition}\label{Prop:Chaos}
    Let $F = \bracketsig[T]{\bell}$ with $\bell \in \TA$. Then $F \in \mathbb{D}^{\infty,2}$ and the kernel functions $f_{\word{i_1\ldots i_n}}$ in the chaos expansion are given by
    \begin{equation}
        f_{\word{i_1\ldots i_n}}(s_1, \ldots, s_n)
        = \frac{1}{n!}
        \left\langle
            \bell,\,
            \widehat{\mathcal{E}}_{s_1} 
            \otimes \word{i_1}
            \otimes \widehat{\mathcal{E}}_{s_2 - s_1}
            \otimes \word{i_2}
            \otimes \cdots
            \otimes \word{i_n}
            \otimes \widehat{\mathcal{E}}_{s_n - s_{n-1}}
        \right\rangle.
    \end{equation}
\end{proposition}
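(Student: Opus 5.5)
The plan is to combine the explicit ``pierced'' formula for iterated Malliavin derivatives from Corollary~\ref{cor:brown_sig_def} with the independence and stationarity of Brownian increments. First, $F\in\mathbb{D}^{\infty,2}$ is immediate. By Corollary~\ref{cor:brown_sig_def}, each coordinate $\sigW[T]^{\word{v}}$ lies in $\mathbb{D}^{\infty,2}$. Since $\bell\in\TA[d+1]$ has finitely many non-zero coefficients, $F=\sum_{\word{v}}\bell^{\word{v}}\sigW[T]^{\word{v}}$ is a finite linear combination of elements of $\mathbb{D}^{\infty,2}$. Hence the kernel formula $f_{\word{i_1\ldots i_n}}=\frac{1}{n!}\E[\D_{s_1}^{i_1}\cdots\D_{s_n}^{i_n}F]$ quoted above applies.

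Next, I compute the iterated derivative. The chaos expansion only involves the kernels on the ordered simplex $0<s_1<\cdots<s_n<T$, so it suffices to take ordered times. By linearity of $\D$ and \eqref{eq:higher_order_md},
\[
\D_{s_1}^{i_1}\cdots\D_{s_n}^{i_n}F=\left\langle \bell,\ \sigW[s_1]\otimes\word{i_1}\otimes\sigW[s_1,s_2]\otimes\cdots\otimes\word{i_n}\otimes\sigW[s_n,T]\right\rangle ,
\]
for a.e.\ such $(s_1,\dots,s_n)$. Since $\bell$ has finite support, the pairing is a finite sum. Expanding the tensor products coordinatewise, as in the integrability estimate of Corollary~\ref{cor:brown_sig_def}, each coordinate is a finite sum of products
\[
\sigW[s_1]^{\word{w_0}}\,\sigW[s_1,s_2]^{\word{w_1}}\cdots\sigW[s_n,T]^{\word{w_n}} .
\]
The expectation can therefore be moved inside the pairing and the sums without any convergence issue.

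The key step is evaluating the expectation of each such product. The factor $\sigW[s_k,s_{k+1}]$ is measurable with respect to the increments $(W_u-W_{s_k})_{u\in[s_k,s_{k+1}]}$, with the conventions $s_0=0$ and $s_{n+1}=T$. These $\sigma$-fields are mutually independent for disjoint intervals, so the expectation factorizes. Stationarity of increments then gives $\sigW[s_k,s_{k+1}]\overset{d}{=}\sigW[s_{k+1}-s_k]$, exactly as in the proof of the Clark--Ocone formula, so that $\E[\sigW[s_k,s_{k+1}]^{\word{w}}]=\widehat{\mathcal{E}}_{s_{k+1}-s_k}^{\word{w}}$. Reassembling the coordinates yields
\[
\E\left[\D_{s_1}^{i_1}\cdots\D_{s_n}^{i_n}F\right]=\left\langle\bell,\ \widehat{\mathcal{E}}_{s_1}\otimes\word{i_1}\otimes\widehat{\mathcal{E}}_{s_2-s_1}\otimes\cdots\otimes\word{i_n}\otimes\widehat{\mathcal{E}}_{T-s_n}\right\rangle .
\]
Here the last factor is the expected signature over the terminal interval $[s_n,T]$; this is how I would read the final factor in the displayed statement. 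Dividing by $n!$ gives the claimed formula, and \eqref{eq:brownian_esig} makes every factor explicit.

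There is no serious obstacle. The only points needing care are bookkeeping ones. One is that the ``a.e.'' qualifier of \eqref{eq:higher_order_md} is harmless, since kernels are only defined up to null sets. The other is that the argument must be run on the ordered simplex, with kernels for other orderings obtained by the usual symmetrization, which is already built into the normalization $n!$ of the expansion.
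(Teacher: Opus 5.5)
Your proof is correct and follows the paper's argument: apply the pierced formula \eqref{eq:higher_order_md} for the iterated derivative, then use independence and stationarity of the signature increments over disjoint intervals to factor the expectation into expected signatures. You are also right to read the last factor as $\widehat{\mathcal{E}}_{T-s_n}$; the displayed statement has a typo there, and so does the paper's one-line proof, which omits the factor $\widehat{\mathbb{W}}_{s_n,T}$ from its list of independent increments.
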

 
In particular, recalling that $|\word{v}|_{\word{i}}$ denotes the number of occurrences of the letter $\word{i}$ in $\word{v} \in V$, we have
$$
f_{\word{i_1\ldots i_n}}(s_1, \ldots, s_n) \equiv 0, \quad n  >  N = \sup\left\{\sum_{i=1}^d|\word{v}|_{\word{i}}\colon\ \word{v} \in V, \ \bell^{\word{v}} \neq 0\right\},
$$
so that $F$ has a finite chaos expansion of order $N$. 
\begin{proof}
    The result follows from \eqref{eq:higher_order_md} and the independence of 
    $\sigW[s_1],\ \sigW[s_1, s_2],\ \ldots,\ \sigW[s_{n-1}, s_n]$.
\end{proof}

\subsection{The Ornstein--Uhlenbeck semigroup and its generator}

In this section we establish explicit formulas for the action of the Ornstein--Uhlenbeck semigroup and its generator on the Brownian signature.
The Ornstein--Uhlenbeck semigroup ${\bf T}_\vartheta$ (see, for instance, \citet[Section 1.4]{nualart2006malliavin}) and its generator are usually introduced using the splitting of the Brownian motion
    $$
    B^{\vartheta} := e^{-\vartheta} W + \sqrt{1 - e^{-2\vartheta}} W^\perp, \quad \vartheta \geq 0,
    $$
    where $W^\perp$ is an independent copy of $W$. The semigroup is then defined by
    \begin{equation*}
        {\bf T}_\vartheta F(W) = \E[F( B^{\vartheta}) \mid \F^W_T], \quad \vartheta \geq 0,
    \end{equation*}
    and its generator is given by
    \begin{equation}\label{eq:gen_dcmp}
        {\bf L} = \dfrac{d}{d\vartheta}\bigg|_{\vartheta=0}{\bf T}_\vartheta= -\sum_{i=1}^d \delta^i \D^i,
    \end{equation}
    where the last identity follows from \citep[Proposition~1.4.3]{nualart2006malliavin}.
    
    However, for later applications to integration-by-parts formulae, it may be convenient to split to all components of $W$, but only some of them. This leads to the following generalization of ${\bf T}_\vartheta$.

\begin{definition}
    Fix $\bm{\kappa} = (\kappa_1, \ldots, \kappa_d) \in \R_+^d$ and set 
    \[
    B^{\vartheta, \bm{\kappa}}_t
    := \left(e^{-\kappa_i\vartheta} W_t^i + \sqrt{1 - e^{-2\kappa_i\vartheta}} W_t^{\perp, i}\right)_{i = 1, \ldots, d},\quad t \in [0,T], \quad \vartheta \geq 0.
    \]
    The $\bm{\kappa}$-Ornstein--Uhlenbeck semigroup and its generator are defined by
        \begin{equation}\label{eq:OU_semi_std}
        {\bf T}_\vartheta^{\bm{\kappa}} F(W) = \E[F( B^{\vartheta, \bm{\kappa}}) \mid \F^W_T], \qquad 
        {\bf L}^{\bm{\kappa}} = \dfrac{d}{d\vartheta}\bigg|_{\vartheta=0}{\bf T}_\vartheta^{\bm{\kappa}}.
    \end{equation}
\end{definition}

In particular, when $\bm{\kappa} = (1, \ldots, 1)$, the definition coincides with the standard Ornstein--Uhlenbeck semigroup \eqref{eq:OU_semi_std}. When $\bm{\kappa} = e_i \in \R^d_+$ for some $i = 1, \ldots, d$, this corresponds to splitting only the $i$-th component of $W$. We will denote the corresponding semigroup and generator by $ {\bf T}^i_\vartheta$ and ${\bf L}^{i}$.
Note that the operators \( \mathbf{T}_\vartheta^i \) and \( \mathbf{L}^i \),
    can be viewed as the standard Ornstein--Uhlenbeck semigroup and generator associated with the
    one-dimensional Brownian motion \( W^i \), conditional on the remaining components
    \( W^1, \ldots, W^{i-1}, W^{i+1}, \ldots, W^d \). In particular, this implies that 
    $$
    \mathbf{L}^i = - \delta^i\D^i,
    $$
    which leads to the linear decomposition of the generator:
    $$
    \mathbf{L} = \sum_{i=1}^d\mathbf{L}^i.
    $$

\begin{sqremark}
    Note, however, that the splitting of the Ornstein--Uhlenbeck semigroup itself is not linear:
    \[
    \mathbf{T}_\vartheta \neq \sum_{i=1}^d \mathbf{T}_\vartheta^i.
    \]
    For instance, one can easily verify that, for \( F = W_T^1 W_T^2 \),
    \[
    \mathbf{T}_\vartheta \bigl(W_T^1 W_T^2\bigr)
    = e^{-2\vartheta} W_T^1 W_T^2
    \neq 2 e^{-\vartheta} W_T^1 W_T^2
    = (\mathbf{T}_\vartheta^1 + \mathbf{T}_\vartheta^2)\bigl(W_T^1 W_T^2\bigr).
    \]
\end{sqremark}

Our goal is to compute ${\bf T}_\vartheta^{\bm{\kappa}} F(W)$ and ${\bf L}^{\bm{\kappa}} F(W)$ in the special case $F(W) = \sigW$. We recall that in this case ${\bf T}_\vartheta^{\bm{\kappa}} \sigW$ and ${\bf L}^{\bm{\kappa}} \sigW$ are understood as elements of $\eTA$, with the operators applied component-wise:
$$
{\bf T}_\vartheta^{\bm{\kappa}}\sigW 
= \left({\bf T}_\vartheta^{\bm{\kappa}} \sigW^{\word{v}}\right)_{\word{v} \in V}, \qquad {\bf L}^{\bm{\kappa}} \sigW 
= \left({\bf L}^{\bm{\kappa}} \sigW^{\word{v}}\right)_{\word{v} \in V}.
$$
\begin{sqremark}
    The domain of \( \mathbf{L} \) coincides with the space $\mathbb{D}^{2, 2}$ of random variables that are twice Malliavin differentiable \citep[Section~1.4.2]{nualart2006malliavin}. 
    Therefore, \( \mathbf{L} \sigW \) is well-defined, since
    \( \sigW \in \mathbb{D}^{\infty,2} \) by Corollary~\ref{cor:brown_sig_def}. We will show that
    $$
    {\bf L}^{\bm{\kappa}} = -\sum_{i = 1}^d \kappa_i \delta^i\D^i,
    $$
    so that $\mathrm{Dom}({\bf L}^{\bm{\kappa}}) = \mathrm{Dom}({\bf L})$ and ${\bf L}^{\bm{\kappa}}\sigW$ is well-defined as well.
\end{sqremark}

We introduce two linear operators on \( \eTA[d+1] \) that will be useful for computing
\( \mathbf{T}^{\bm{\kappa}}_\vartheta \sigW \) and \( \mathbf{L}^{\bm{\kappa}} \sigW \).

\begin{definition}
    Fix $\vartheta \geq 0$ and a letter $\word{i} \in A$. We define the operators 
    ${\bf J}_{\word{i}}^\vartheta \colon \eTA[d+1] \to \eTA[d+1]$ 
    and 
    ${\bf \Lambda}_{\word{i}} \colon \eTA[d+1] \to \eTA[d+1]$ 
    by
\begin{align*}
    &{\bf J}_{\word{i}}^\vartheta\colon
    \sum_{n \ge 0}\sum_{|\word{v}| = n} \bell^{\word{v}} \word{v}
    \mapsto 
    \sum_{n \ge 0}\sum_{|\word{v}| = n} 
        e^{-|\word{v}|_{\word{i}}\,\vartheta}\,
        \bell^{\word{v}} \word{v}, 
    \\
    &{\bf \Lambda}_{\word{i}}\colon
    \sum_{n \ge 0}\sum_{|\word{v}| = n} \bell^{\word{v}} \word{v}
    \mapsto 
    \sum_{n \ge 0}\sum_{|\word{v}| = n}
        |\word{v}|_{\word{i}}\,
        \bell^{\word{v}} \word{v},
\end{align*}
where $|\word{v}|_{\word{i}}$ denotes the number of occurrences of the letter $\word{i}$ in the word $\word{v}$.
\end{definition}

We note that both operators are diagonal, and they satisfy
$
    {\bf \Lambda}_{\word{i}}
    = -\dfrac{d}{d\vartheta} {\bf J}_{\word{i}}^{\vartheta}\big|_{\vartheta = 0}.
$
 
\begin{theorem}\label{thm:ou_semigroup}
The $\bm{\kappa}$-Ornstein--Uhlenbeck semigroup operator of the Brownian signature is given by
\begin{equation}\label{eq:ou_semi_op}
    {\bf T}^{\bm{\kappa}}_\vartheta\sigW 
    = \exp\!\left(\sum_{i=1}^d\frac{1 - e^{-\kappa_i\vartheta}}{2}\,{{\bf\Psi}}^{\word{0}}_{\word{ii}}\right)
      \left(\prod_{i=1}^d{\bf J}_{\word{i}}^{\kappa_i\vartheta}\right)\sigW.
\end{equation}
Moreover, we have $\sigW \in \mathrm{Dom}({\bf L}^{\bm{\kappa}})$ and
\begin{equation}\label{eq:ou_semi_gen}
    {\bf L}^{\bm{\kappa}}\sigW = 
    \sum_{i=1}^d\kappa_i\bigl({{\bf\Psi}}^{\word{0}}_{\word{ii}} - {\bf \Lambda}_{\word{i}}\bigr)\sigW.
\end{equation}
Their adjoint operators are
\begin{equation}\label{eq:ou_semi_adj}
    ({\bf T}_\vartheta^{\bm{\kappa}})^* 
    = \left(\prod_{i=1}^d{\bf J}_{\word{i}}^{\kappa_i\vartheta} \right)
      \exp\!\left(\frac{1 - e^{-2\kappa_i\vartheta}}{2}\,{{\bf\Psi}}^{\word{ii}}_{\word{0}}\right),
    \qquad 
    ({\bf L}^{\bm{\kappa}})^* 
    = \sum_{i=1}^d\kappa_i({{\bf\Psi}}^{\word{ii}}_{\word{0}} - {\bf \Lambda}_{\word{i}}).
\end{equation}
\end{theorem}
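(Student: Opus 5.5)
The plan is to identify ${\bf T}^{\bm\kappa}_\vartheta\sigW$ as the solution of a linear Stratonovich SDE driven by $\widehat W$, and then recognise that solution as an algebra morphism applied to $\sigW$. Fix $\vartheta$ and set $a_i:=e^{-\kappa_i\vartheta}$, $b_i:=\sqrt{1-a_i^2}$, $c_i:=\frac{1-a_i^2}{2}=\frac{1-e^{-2\kappa_i\vartheta}}{2}$. I expect this constant to be the correct coefficient in \eqref{eq:ou_semi_op}, in agreement with \eqref{eq:ou_semi_adj} and the summary in Section~\ref{sect:mainres}. Let $\mathbb{S}_t$ be the signature of the time-augmented path $(t,B^{\vartheta,\bm\kappa}_t)$. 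By \eqref{eq:sig_dynamics}, converted to Itô form and using $\langle B^i\rangle_t=t$ and $\langle B^i,B^j\rangle=0$ for $i\neq j$, we get
\begin{equation*}
d\mathbb{S}_t=\mathbb{S}_t\otimes\Bigl(\word{0}+\tfrac12\sum_{i=1}^d\word{ii}\Bigr)dt+\sum_{i=1}^d\mathbb{S}_t\otimes\word{i}\,\bigl(a_i\,dW^i_t+b_i\,dW^{\perp,i}_t\bigr).
\end{equation*}
Set $\mathbb{Y}_t:=\E[\mathbb{S}_t\mid\F^W_T]$ coordinatewise. Conditioning on all of $W$, the Itô integrals against $W^{\perp}$ have zero conditional expectation, because $W^\perp$ is independent of $\F^W_T$. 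The $dW^i$-integrals commute with the conditional expectation, since the integrands are adapted to the joint filtration and $W$ is measurable. Hence
\begin{equation*}
d\mathbb{Y}_t=\mathbb{Y}_t\otimes\Bigl(\word{0}+\tfrac12\sum_i\word{ii}\Bigr)dt+\sum_i a_i\,\mathbb{Y}_t\otimes\word{i}\,dW^i_t=\mathbb{Y}_t\otimes\Bigl(\word{0}+\sum_i c_i\,\word{ii}\Bigr)dt+\sum_i a_i\,\mathbb{Y}_t\otimes\word{i}\circ dW^i_t,
\end{equation*}
where the second form uses the Stratonovich correction $\tfrac12a_i^2\,\word{ii}\,dt$. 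This step is the main technical obstacle. I would make it rigorous level by level by induction on word length: each truncated system is a finite linear SDE with polynomially integrable coefficients. I would also justify conditioning the stochastic integrals, for instance by approximating them with Riemann sums or by using the enlarged filtration $\F^W_T\vee\F^{W^\perp}_t$, with respect to which $W^\perp$ remains a Brownian motion.

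Next, let $\varphi$ be the unique continuous algebra morphism of $\eTA[d+1]$ with $\varphi(\word{0})=\word{0}+\sum_ic_i\word{ii}$ and $\varphi(\word{i})=a_i\word{i}$. Applying $\varphi$ to \eqref{eq:sig_dynamics} for $\widehat W$ shows that $\varphi(\sigW)$ solves the same linear equation as $\mathbb{Y}$ with the same initial value $\emptyword$. By uniqueness (Proposition~\ref{prop:variation_of_const}, levelwise), $\mathbb{Y}_t=\varphi(\sigW)$. It remains to factor $\varphi=\exp\bigl(\sum_ic_i{\bf\Psi}^{\word{0}}_{\word{ii}}\bigr)\circ\prod_i{\bf J}^{\kappa_i\vartheta}_{\word{i}}$.
\begin{itemize}
\item The product $\prod_i{\bf J}^{\kappa_i\vartheta}_{\word{i}}$ is exactly the morphism $\word{i}\mapsto a_i\word{i}$, $\word{0}\mapsto\word{0}$.
\item The operator $D:=\sum_ic_i{\bf\Psi}^{\word{0}}_{\word{ii}}$ is a derivation: it replaces one $\word{0}$ at a time. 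On a word with $k$ zeros, $D^m/m!$ produces each choice of $m$ distinct zeros to be replaced exactly once, because the $m!$ orderings cancel the factorial. The series is also finite on each word. Therefore $\exp(D)$ is the morphism $\word{0}\mapsto\word{0}+\sum_ic_i\word{ii}$ fixing the other letters.
\item The order of composition matters: ${\bf J}$ must act before $\exp(D)$, so that the inserted letters $\word{ii}$ are not rescaled.
\end{itemize}
This yields \eqref{eq:ou_semi_op}.

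For the generator, each coordinate of ${\bf T}^{\bm\kappa}_\vartheta\sigW$ is a finite combination of coordinates of $\sigW$ with coefficients smooth in $\vartheta$. So the derivative at $\vartheta=0$ exists in $L^2$, which gives $\sigW\in\mathrm{Dom}({\bf L}^{\bm\kappa})$. Differentiating with $\frac{d}{d\vartheta}c_i|_{0}=\kappa_i$, $\frac{d}{d\vartheta}{\bf J}^{\kappa_i\vartheta}_{\word{i}}|_0=-\kappa_i{\bf\Lambda}_{\word{i}}$ and $D|_{\vartheta=0}=0$ gives \eqref{eq:ou_semi_gen} by the product rule.

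Finally, the adjoints follow from three facts: ${\bf J}$ and ${\bf\Lambda}$ are diagonal, hence self-adjoint; $({\bf\Psi}^{\word{0}}_{\word{ii}})^*={\bf\Psi}^{\word{ii}}_{\word{0}}$ by Proposition~\ref{prop:psi_adjoint}; and the adjoint of an exponential is the exponential of the adjoint, justified by the finiteness of the series on each $\bell\in\TA[d+1]$. Taking adjoints reverses the order of composition, which gives \eqref{eq:ou_semi_adj}.
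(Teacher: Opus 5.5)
Your proposal is correct and follows essentially the same route as the paper. You condition the signature dynamics of $(t,B^{\vartheta,\bm\kappa}_t)$ on $\F^W_T$ to get a linear equation driven by $\widehat W$, and identify its solution with the tensor-algebra morphism $\word{0}\mapsto\word{0}+\sum_i c_i\word{ii}$, $\word{i}\mapsto e^{-\kappa_i\vartheta}\word{i}$ applied to $\sigW$ (you via uniqueness, the paper via Picard iteration). You then factor it as $\exp(\sum_i c_i{\bf\Psi}^{\word{0}}_{\word{ii}})\prod_i{\bf J}^{\kappa_i\vartheta}_{\word{i}}$, and differentiate and take adjoints exactly as the paper does.

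You are also right that the coefficient must be $c_i=\frac{1-e^{-2\kappa_i\vartheta}}{2}$, as in the paper's intermediate SDE and in \eqref{eq:ou_semi_adj}. The $\frac{1-e^{-\kappa_i\vartheta}}{2}$ printed in \eqref{eq:ou_semi_op} is a typo, and it would not differentiate to \eqref{eq:ou_semi_gen}.
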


\begin{proof}
\textbf{Step 1. Semigroup ${\bf T}_\vartheta^{\bm{\kappa}}$.}
We denote the time-augmented signature of $B^{\vartheta, \bm{\kappa}}$ by \( \sigB[]^{\vartheta, \bm\kappa} \) and write the dynamics equation \eqref{eq:sig_dynamics} for it:
\begin{equation}\label{eq:sig_B_theta_dyn}
    d \sigB[t]^{\vartheta, \bm\kappa} 
    = (\sigB[t]^{\vartheta, \bm\kappa} \otimes \word{0}) \, dt
      + \sum_{i=1}^d(\sigB[t]^{\vartheta, \bm\kappa} \otimes \word{i}) \circ (e^{-\kappa_i\vartheta} \, dW_t^i 
      + \sqrt{1 - e^{-2\kappa_i\vartheta}} \, dW_t^{\perp, i}).
\end{equation}
Writing the dynamics of \( (\sigB[t]^{\vartheta, \bm\kappa} \otimes \word{i}) \) in a similar way, one can easily verify that 
\[
d \bigl\langle \sigB[]^{\vartheta, \bm\kappa} \otimes \word{i},\, \sqrt{1 - e^{-2\kappa_i\vartheta}} W^{\perp, i} \bigr\rangle_t 
= ({1 - e^{-2\kappa_i\vartheta}}) (\sigB[t]^{\vartheta, \bm\kappa} \otimes \word{ii}) \, dt, \quad i = 1, \ldots, d.
\]
Using the relationship between Itô and Stratonovich integrals for 
\( (\sigB[t]^{\vartheta, \bm\kappa} \otimes \word{i}) \circ \sqrt{1 - e^{-2\kappa_i\vartheta}} \, dW_t^{\perp, i} \) in \eqref{eq:sig_B_theta_dyn} 
and rearranging the terms, we obtain
\[
d \sigB[t]^{\vartheta, \bm\kappa} 
= \sigB[t]^{\vartheta, \bm\kappa}\otimes\!\left(\word{0} 
   + \sum_{i=1}^d\frac{{1 - e^{-2\kappa_i\vartheta}}}{2}\word{ii}\right) dt
   + \sum_{i=1}^d(\sigB[t]^{\vartheta, \bm\kappa}\otimes\word{i}) \circ e^{-\kappa_i\vartheta}\, dW_t^i 
   + \sum_{i=1}^d(\sigB[t]^{\vartheta, \bm\kappa}\otimes\word{i})\,\sqrt{1 - e^{-2\kappa_i\vartheta}}\, dW_t^{\perp, i}.
\]
Taking the conditional expectation $\E[\,\cdot\, | \F^W_T]$ yields a linear SDE for 
\(
{\bf T}^{\bm\kappa}_\vartheta\sigW = \E[\sigB[t]^{\vartheta, \bm\kappa}\, | \F^W_T]:
\)
\[
    d {\bf T}^{\bm\kappa}_\vartheta\sigW
    = {\bf T}^{\bm\kappa}_\vartheta\sigW\otimes\!\left(\word{0} 
      + \sum_{i=1}^d\frac{1 - e^{-2\kappa_i\vartheta}}{2}\word{ii}\right) dt
      + \sum_{i=1}^d{\bf T}^{\bm\kappa}_\vartheta\sigW\otimes(e^{-\kappa_i\vartheta} \word{i}) \circ dW_t^i
      .
\]
Applying Picard iteration, one obtains a solution linear in $\sig$:
\[
    {\bf T}^{\bm\kappa}_\vartheta\sigW 
    = \sum_{\word{v} = \word{i_1 \ldots i_n}}
      \bigl(\bm{u}^{\vartheta, {\bm\kappa}}(\word{i_1}) \otimes \cdots \otimes \bm{u}^{\vartheta, {\bm\kappa}}(\word{i_n})\bigr)
      \sigW^{\word{i_1 \ldots i_n}},
\]
where  
\begin{equation}\label{eq:u_formula}
    \bm{u}^{\vartheta, {\bm\kappa}}(\word{0}) 
= \word{0} + \sum_{i=1}^d\frac{1 - e^{-\kappa_i\vartheta}}{2}\word{ii}, 
\qquad 
\bm{u}^{\vartheta, {\bm\kappa}}(\word{i}) = e^{-\kappa_i\vartheta}\word{i},
\qquad i = 1, \ldots, d.
\end{equation}

Hence, we can define an operator ${\bf T}^{\bm\kappa}_\vartheta\colon \eTA[d+1] \to \eTA[d+1]$ by
\[
{\bf T}^{\bm\kappa}_\vartheta(\word{i_1\ldots i_n}) 
= \bigotimes_{j=1}^n \bm{u}^{\vartheta, {\bm\kappa}}(\word{i_j}),
\]
so that ${\bf T}^{\bm\kappa}_\vartheta(\sigW)$ coincides with the action of the Ornstein--Uhlenbeck semigroup.
 
Since for all $\word{j} \in \{\word{0},\word{1},\ldots,\word{d}\}$,
$$
 {\bf\Psi}^{\word{0}}_{\word{ii}}(\word{j}) = \indic{j = 0}\word{ii}, \qquad  {\bf J}_{\word{i}}^{\kappa_i\vartheta}(\word{j}) = \begin{cases}
     e^{-\kappa_i\vartheta}\word{j}, &j = i, \\ \word{j}, & j \neq i,
\end{cases} 
$$
and
$$
\exp\!\left(\sum_{i=1}^d\frac{1 - e^{-\kappa_i\vartheta}}{2}{{\bf\Psi}}^{\word{0}}_{\word{ii}}\right)(\word{j}) = \begin{cases}
    \word{0} + \sum_{i=1}^d\frac{1 - e^{-\kappa_i\vartheta}}{2}\word{ii}, & j = 0, \\
    \word{j}, &j \neq 0,
\end{cases}
$$
equation \eqref{eq:u_formula} reads
\[
\bm{u}^{\vartheta, {\bm\kappa}}(\word{j}) = \exp\!\left(\sum_{i=1}^d\frac{1 - e^{-\kappa_i\vartheta}}{2}{{\bf\Psi}}^{\word{0}}_{\word{ii}}\right)
\left(\prod_{i=1}^d{\bf J}_{\word{i}}^{\kappa_i\vartheta}\right)(\word{j}) = {\bf T}^{\bm\kappa}_\vartheta(\word{j}),
\qquad \word{j} \in \{\word{0},\word{1},\ldots,\word{d}\}.
\]

Moreover, each ${{\bf\Psi}}^{\word{0}}_{\word{ii}}$ satisfies the Leibniz rule,
\[
{{\bf\Psi}}^{\word{0}}_{\word{ii}}(\word{u}\otimes\word{v})
= {{\bf\Psi}}^{\word{0}}_{\word{ii}}(\word{u})\otimes\word{v}
  + \word{u}\otimes{{\bf\Psi}}^{\word{0}}_{\word{ii}}(\word{v}),
\]
and the same holds for $\sum_{i=1}^d\frac{1 - e^{-\kappa_i\vartheta}}{2}{{\bf\Psi}}^{\word{0}}_{\word{ii}}$, so that
its exponential is a tensor algebra homomorphism:
\[
\exp\left(\sum_{i=1}^d\frac{1 - e^{-\kappa_i\vartheta}}{2}{{\bf\Psi}}^{\word{0}}_{\word{ii}}\right)(\word{u}\otimes\word{v})
= \exp\left(\sum_{i=1}^d\frac{1 - e^{-\kappa_i\vartheta}}{2}{{\bf\Psi}}^{\word{0}}_{\word{ii}}\right)(\word{u})
  \otimes
  \exp\left(\sum_{i=1}^d\frac{1 - e^{-\kappa_i\vartheta}}{2}{{\bf\Psi}}^{\word{0}}_{\word{ii}}\right)(\word{v}).
\]

Moreover, for all $i = 1, \ldots, d$, ${\bf J}_{\word{i}}^{\kappa_i\vartheta}$ is a homomorphism as well.  
Thus, the identity
\[
{\bf T}^{\bm\kappa}_\vartheta
= \exp\!\left(\sum_{i=1}^d\frac{1 - e^{-\kappa_i\vartheta}}{2}{{\bf\Psi}}^{\word{0}}_{\word{ii}}\right)
  \left(\prod_{i=1}^d{\bf J}_{\word{i}}^{\kappa_i\vartheta}\right)
\]
is proved, yielding \eqref{eq:ou_semi_op}.

\textbf{Step 2. Generator ${\bf L}^{\bm\kappa}$.} 
By definition,
\[
{\bf L}^{\bm\kappa} 
= \left[
\frac{d}{d\vartheta}
\left(\exp\!\left(\sum_{i=1}^d\frac{1 - e^{-2\kappa_i\vartheta}}{2}{{\bf\Psi}}^{\word{0}}_{\word{ii}} \right)\right)
\prod_{i=1}^d{\bf J}_{\word{i}}^{\kappa_i\vartheta}
+ 
\sum_{j=1}^d\exp\!\left(\sum_{i=1}^d\frac{1 - e^{-2\kappa_i\vartheta}}{2}{{\bf\Psi}}^{\word{0}}_{\word{ii}} \right)
\left(\frac{d}{d\vartheta}{\bf J}_{\word{j}}^{\kappa_j\vartheta}\right)\prod_{k\neq j}{\bf J}_{\word{k}}^{\kappa_k\vartheta}
\right]_{\vartheta=0}.
\]

A direct computation gives
\[
\frac{d}{d\vartheta}
\exp\!\left(\sum_{i=1}^d\frac{1 - e^{-2\kappa_i\vartheta}}{2}{{\bf\Psi}}^{\word{0}}_{\word{ii}}\right)\Bigg|_{\vartheta=0}
=
\sum_{i=1}^d\kappa_i{{\bf\Psi}}^{\word{0}}_{\word{ii}},
\qquad 
{\bf J}_{\word{i}}^{\kappa_i\vartheta}\big|_{\vartheta=0}
= \exp\!\left(\sum_{i=1}^d\frac{1 - e^{-2\kappa_i\vartheta}}{2}{{\bf\Psi}}^{\word{0}}_{\word{ii}}\right)\Bigg|_{\vartheta=0} = \mathrm{Id}.
\]
Recall that $ \frac{d}{d\vartheta}{\bf J}_{\word{j}}^{\kappa_j\vartheta}\Big|_{\vartheta = 0} = -\kappa_j {\bf \Lambda_{\word{j}}}$
so \eqref{eq:ou_semi_gen} follows.

Finally, the adjoint formulas \eqref{eq:ou_semi_adj} follow from Proposition~\ref{prop:psi_adjoint} and the fact that ${\bf J}_{\word{i}}^{\kappa_i\vartheta}$ and ${\bf \Lambda}_{\word{i}}$ are diagonal operators.
\end{proof}

\begin{corollary}
    The standard Ornstein--Uhlenbeck semigroup ${\bf T}_\vartheta$ and generator ${\bf L}$ are given by 
    \begin{equation}
        {\bf T}_\vartheta\sigW 
        = \exp\!\left(\frac{1 - e^{-2\vartheta}}{2}\,\sum_{i=1}^d{{\bf\Psi}}^{\word{0}}_{\word{ii}}\right)
          \left(\prod_{i=1}^d {\bf J}_{\word{i}}^{\vartheta}\right)\sigW, \qquad     {\bf L}\sigW = 
    \sum_{i=1}^d\bigl({{\bf\Psi}}^{\word{0}}_{\word{ii}} - {\bf \Lambda}_{\word{i}}\bigr)\sigW.
    \end{equation}
    The adjoint operators are given by
    \begin{equation}\label{eq:statndard_ou_dual}
    {\bf T}_\vartheta^* 
    = \left(\prod_{i=1}^d {\bf J}_{\word{i}}^{\vartheta}\right)
      \exp\!\left(\frac{1 - e^{-2\vartheta}}{2}\,\sum_{i=1}^d{{\bf\Psi}}^{\word{ii}}_{\word{0}}\right) \qquad     {\bf L}^* = 
    \sum_{i=1}^d\bigl({{\bf\Psi}}^{\word{ii}}_{\word{0}} - {\bf \Lambda}_{\word{i}}\bigr).
    \end{equation}
\end{corollary}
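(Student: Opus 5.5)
The corollary is the special case $\bm{\kappa} = (1,\ldots,1)$ of Theorem~\ref{thm:ou_semigroup}, so the plan is to specialize and then check consistency of the exponents.

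\textbf{Specialization.} Setting $\kappa_i = 1$ for all $i$ gives $B^{\vartheta,\bm\kappa} = B^\vartheta$. Hence ${\bf T}^{\bm\kappa}_\vartheta = {\bf T}_\vartheta$ and ${\bf L}^{\bm\kappa} = {\bf L}$, by the remark following the definition of the $\bm\kappa$-semigroup.

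\textbf{Semigroup.} Substituting $\kappa_i=1$ into \eqref{eq:ou_semi_op} produces ${\bf J}^{\vartheta}_{\word{i}}$ for every $i$. The scalar coefficient $\frac{1-e^{-2\vartheta}}{2}$ no longer depends on $i$, so it factors out of the sum over $i$. The exponent here needs care: \eqref{eq:ou_semi_op} and \eqref{eq:u_formula} are printed with $e^{-\kappa_i\vartheta}$, while the proof's Stratonovich-to-Itô step gives $e^{-2\kappa_i\vartheta}$. I would use the latter, which is also the one appearing in Step 2 and in the adjoint formula. As a cross-check, on the word $\word{ii}$ it gives $e^{-2\vartheta}\sigW^{\word{ii}} + \frac{1-e^{-2\vartheta}}{2}\,t$, which agrees with the direct computation
\[
\E\bigl[(B^{\vartheta,i}_t)^2/2 \mid \F^W_T\bigr] = e^{-2\vartheta}(W^i_t)^2/2 + (1-e^{-2\vartheta})\,t/2 .
\]
This identifies the correct coefficient.

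\textbf{Generator.} With $\kappa_i=1$, \eqref{eq:ou_semi_gen} reads directly $\sum_i({\bf\Psi}^{\word{0}}_{\word{ii}}-{\bf\Lambda}_{\word{i}})\sigW$.

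\textbf{Adjoints.} These follow from \eqref{eq:ou_semi_adj} with $\kappa_i=1$. Equivalently, one can argue directly from Proposition~\ref{prop:psi_adjoint}. Since $({\bf\Psi}^{\word{0}}_{\word{ii}})^* = {\bf\Psi}^{\word{ii}}_{\word{0}}$ and adjoints reverse compositions, we get $(\exp(c\,\Psi))^* = \exp(c\,\Psi^*)$. The operators ${\bf J}$ and ${\bf\Lambda}$ are diagonal, hence self-adjoint. The ordering is then reversed as displayed in \eqref{eq:statndard_ou_dual}.

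\textbf{Main obstacle.} The only point needing care is the well-definedness of $\exp(c\,{\bf\Psi}^{\word{ii}}_{\word{0}})$ on $\TA[d+1]$. Each application of ${\bf\Psi}^{\word{ii}}_{\word{0}}$ strictly lowers the word length, so on a fixed $\bell$ the exponential series terminates after finitely many terms. On $\eTA[d+1]$, ${\bf\Psi}^{\word{0}}_{\word{ii}}$ raises the degree, so the series is finite on each graded component. Both adjoint compositions are therefore well defined, and the pairing identity holds term by term.
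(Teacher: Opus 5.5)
Your proposal is correct and is exactly how the paper obtains this corollary: it is the specialization $\bm\kappa=(1,\ldots,1)$ of Theorem~\ref{thm:ou_semigroup}, with the adjoints following from Proposition~\ref{prop:psi_adjoint} and the diagonality of ${\bf J}_{\word{i}}^{\vartheta}$ and ${\bf \Lambda}_{\word{i}}$. You are also right that the coefficient must be $\frac{1-e^{-2\vartheta}}{2}$, so the $e^{-\kappa_i\vartheta}$ printed in \eqref{eq:ou_semi_op} and \eqref{eq:u_formula} is a typo. This is confirmed by the Itô correction in the theorem's proof, by your check on the word $\word{ii}$, and by the requirement that the generator come out as $\sum_i({\bf\Psi}^{\word{0}}_{\word{ii}}-{\bf\Lambda}_{\word{i}})$ with coefficient $1$ rather than $\tfrac12$.
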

\begin{remark}
The semigroup ${\mathbf T}_\vartheta$ can equivalently be defined in terms of chaos expansions; see \citep[Definition 1.4.1]{nualart2006malliavin}. Namely, if $F \in L^2(\Omega, \F^W, \P)$, then
$
F = \sum_{n \ge 0} J_n F,
$
where $J_n F$ denotes the projection of $F$ onto the $n$-th Wiener chaos. The semigroup then acts as
\[
{\mathbf T}_\vartheta F = \sum_{n \ge 0} e^{-n\vartheta} J_n F.
\]
Using the recursive definition of the Wiener chaos and the It\^{o} signature, a simple induction argument based on Fubini's theorem shows that $\widehat{\mathbb{W}}_t^{\mathrm{Ito}, \word{v}}$ belongs to the Wiener chaos of order $\sum_{i=1}^d |\word{v}|_i$.
\[
{\mathbf T}_\vartheta\bracket{\bell}{\widehat{\mathbb{W}}_t^{\mathrm{Ito}}}
=
\bracket{\prod_{i=1}^d {\bf J}_{\word{i}}^{\vartheta}\bell}{\widehat{\mathbb{W}}_t^{\mathrm{Ito}}}.
\]
Comparing this with \eqref{eq:statndard_ou_dual}, we observe that the operator
\(
\exp\!\left(\frac{1 - e^{-2\vartheta}}{2}\sum_{i=1}^d {{\bf\Psi}}^{\word{ii}}_{\word{0}}\right)
\)
is related to the conversion of the Stratonovich signature into the It\^o signature. Indeed, by writing the dynamics of $\sigW[\cdot]$ in terms of It\^o integrals and proceeding as in the proof of Theorem~\ref{thm:ou_semigroup}, one can show that
\[
\bracket{\bell}{\widehat{\mathbb{W}}_t^{\mathrm{Ito}}}
=
\bracket{\exp\!\left(\frac{1}{2}\sum_{i=1}^d {{\bf\Psi}}^{\word{ii}}_{\word{0}}\right)\bell}{\widehat{\mathbb{W}}_t}, \qquad 
\]
which provides a concise operator representation of the It\^o--Stratonovich relationship for signatures similar to \citet[Proposition 1]{benarous1989flots}.
\end{remark}

\begin{corollary}\label{cor:coordintate_OU}
    The ``coordinate'' Ornstein--Uhlenbeck semigroup ${\bf T}_\vartheta^i$ and generator ${\bf L}^i$ are given by 
    \begin{equation}
        {\bf T}^i_\vartheta\sigW 
        = \exp\!\left(\frac{1 - e^{-2\vartheta}}{2}\,{{\bf\Psi}}^{\word{0}}_{\word{ii}}\right)
          {\bf J}_{\word{i}}^{\vartheta}\sigW, \qquad     {\bf L}^i\sigW = 
    \bigl({{\bf\Psi}}^{\word{0}}_{\word{ii}} - {\bf \Lambda}_{\word{i}}\bigr)\sigW.
    \end{equation}
    The adjoint operators are given by
    \begin{equation}
    ({\bf T}^i_\vartheta)^* 
    = {\bf J}_{\word{i}}^{\vartheta}
      \exp\!\left(\frac{1 - e^{-2\vartheta}}{2}\,{{\bf\Psi}}^{\word{ii}}_{\word{0}}\right) \qquad     ({\bf L}^i)^* = 
    {{\bf\Psi}}^{\word{ii}}_{\word{0}} - {\bf \Lambda}_{\word{i}}.
    \end{equation}
\end{corollary}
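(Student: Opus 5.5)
The plan is to specialize Theorem~\ref{thm:ou_semigroup} to $\bm{\kappa} = e_i$, the $i$-th unit vector of $\R^d_+$. The definitions of ${\bf T}^i_\vartheta$ and ${\bf L}^i$ are exactly ${\bf T}^{e_i}_\vartheta$ and ${\bf L}^{e_i}$. So it suffices to evaluate each factor of \eqref{eq:ou_semi_op}, \eqref{eq:ou_semi_gen} and \eqref{eq:ou_semi_adj} at $\kappa_i = 1$ and $\kappa_j = 0$ for $j \neq i$.

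First, I will check that the operators indexed by $j \neq i$ trivialize. For $\kappa_j = 0$ we have ${\bf J}_{\word{j}}^{0\cdot\vartheta} = \mathrm{Id}$, since it multiplies each coordinate by $e^{0} = 1$. This makes the product $\prod_{j}{\bf J}_{\word{j}}^{\kappa_j\vartheta}$ collapse to ${\bf J}_{\word{i}}^{\vartheta}$. Likewise, the coefficient $\frac{1-e^{-2\kappa_j\vartheta}}{2}$ vanishes for $j\neq i$, so the sum inside the exponential reduces to the single term $\frac{1-e^{-2\vartheta}}{2}{\bf\Psi}^{\word{0}}_{\word{ii}}$. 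Here I use the coefficient $1-e^{-2\kappa_i\vartheta}$ that actually arises in the proof of Theorem~\ref{thm:ou_semigroup}: it comes from the quadratic variation $(1-e^{-2\kappa_i\vartheta})$ of $\sqrt{1-e^{-2\kappa_i\vartheta}}W^{\perp,i}$. The display \eqref{eq:ou_semi_op} has a typo dropping the factor $2$ in the exponent, and I will point this out. In the generator formula the weights $\kappa_j = 0$ kill all terms except ${\bf\Psi}^{\word{0}}_{\word{ii}} - {\bf \Lambda}_{\word{i}}$. This gives the stated expressions for ${\bf T}^i_\vartheta\sigW$ and ${\bf L}^i\sigW$.

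For the adjoints, I apply the same specialization to \eqref{eq:ou_semi_adj}. Alternatively, I can argue directly. Proposition~\ref{prop:psi_adjoint} gives $({\bf\Psi}^{\word{0}}_{\word{ii}})^* = {\bf\Psi}^{\word{ii}}_{\word{0}}$, so the adjoint of the exponential is the exponential of the adjoint. The adjoint is computed termwise on each graded component; this is legitimate because each coordinate of the output involves only finitely many terms. The operators ${\bf J}_{\word{i}}^\vartheta$ and ${\bf\Lambda}_{\word{i}}$ are diagonal and hence self-adjoint, and taking the adjoint reverses the order of composition. Together these give $({\bf T}^i_\vartheta)^* = {\bf J}_{\word{i}}^{\vartheta}\exp\bigl(\frac{1-e^{-2\vartheta}}{2}{\bf\Psi}^{\word{ii}}_{\word{0}}\bigr)$ and $({\bf L}^i)^* = {\bf\Psi}^{\word{ii}}_{\word{0}} - {\bf\Lambda}_{\word{i}}$.

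The only delicate point is whether Theorem~\ref{thm:ou_semigroup} legitimately allows $\kappa_j = 0$, since the signature SDE argument in its proof must hold with degenerate coefficients. I expect no obstacle here. With $\kappa_j = 0$ the $j$-th component of $B^{\vartheta,\bm\kappa}$ is just $W^j$, and the derivation (Itô--Stratonovich conversion, conditioning on $\F^W_T$, Picard iteration) goes through verbatim with $\bm u(\word{j}) = \word{j}$. The statement was formulated for $\bm\kappa \in \R^d_+$ including zero entries, so no additional argument is needed.
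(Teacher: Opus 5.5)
Your proposal is correct and follows the paper's route: the corollary is simply Theorem~\ref{thm:ou_semigroup} specialized to $\bm\kappa = e_i$, with the adjoints obtained from Proposition~\ref{prop:psi_adjoint} and the diagonality of ${\bf J}_{\word{i}}^{\vartheta}$ and ${\bf \Lambda}_{\word{i}}$. You are also right that the correct coefficient is $\frac{1-e^{-2\kappa_i\vartheta}}{2}$, as both the It\^o--Stratonovich computation and Step~2 of that proof require, so the $e^{-\kappa_i\vartheta}$ appearing in \eqref{eq:ou_semi_op} and \eqref{eq:u_formula} is a typo.
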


An alternative way to compute the generators ${\bf L}^i$ consists in combining the formula \(\mathbf{L}^i = -\delta^i\D^i\) and Theorem~\ref{thm:iter_int_of_mal_der}. We start by writing
\begin{equation}\label{eq:skor_of_der}
    \mathbf{L}^i\sigW[t] = -\delta^i(\D^i\sigW) = -\delta^i\Big(\sigW[\cdot]\otimes\word{i}\otimes\sigW[\cdot, t]\Big),
\end{equation}

To proceed, one needs to express the Skorokhod integral in terms of the Stratonovich integral, as established in \cite[Theorem~3.1.1]{nualart2006malliavin}:
\begin{equation}\label{eq:skor_strato}
    \delta^i(X)
    = \int_0^t X_u \circ dW_u^i
    - \frac{1}{2}\int_0^t \left((\D^{+,i} X)_u + (\D^{-,i} X)_u\right) \, du,
\end{equation}
where $\D^{+,i} X$ and $\D^{-,i} X$ denote the $i$-th components of the $\D^{+} X$ and $\D^{-} X$ defined in \eqref{eq:strato_reg_assump}.

For the processes $X$ appearing in the Skorokhod integral \eqref{eq:skor_of_der}, $\D^{+,i} X$ and $\D^{-,i} X$ can be computed explicitly.

\begin{lemma}\label{lem:mal_trace}
Let $\word{v} \in V$ and consider the $T_2(\R^{d+1})$-valued process
\[
\sigX[s] = \sigW[s]\otimes\word{v}\otimes\sigW[s,t], \qquad s \leq t.
\]
Then $\sigX[\cdot] \in \mathbb{L}_2^{1,2}$ and, for $s \leq t$,
\[
(\D^{+,i} \sigX[\cdot])_s
= \sigW[s]\otimes\word{iv}\otimes\sigW[s,t],
\qquad
(\D^{-,i} \sigX[\cdot])_s
= \sigW[s]\otimes\word{vi}\otimes\sigW[s,t].
\]
\end{lemma}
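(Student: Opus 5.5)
We first compute the Malliavin derivative $\D^i_r \sigX[u]$ for all pairs $(r,u)$, using Corollary~\ref{cor:brown_sig_def} together with Chen's identity. For fixed $u\le t$, the process is $\sigX[u] = \sigW[u]\otimes\word{v}\otimes\sigW[u,t]$. Here $\sigW[u]$ depends only on increments on $[0,u]$, and $\sigW[u,t]$ only on increments on $[u,t]$. The product rule and \eqref{eq:malliavin_der_solution} applied to each factor then give two cases:
\begin{align*}
\D^i_r\sigX[u] &= \sigW[r]\otimes\word{i}\otimes\sigW[r,u]\otimes\word{v}\otimes\sigW[u,t], && r\le u,\\
\D^i_r\sigX[u] &= \sigW[u]\otimes\word{v}\otimes\sigW[u,r]\otimes\word{i}\otimes\sigW[r,t], && u< r\le t,
\end{align*}
and $\D^i_r\sigX[u]=0$ for $r>t$. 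For the second case, the derivative of $\sigW[u,t]$ comes from Corollary~\ref{cor:brown_sig_def} applied to the increment path, by stationarity and independence of increments. Square-integrability of these expressions, and of the second derivatives (which have the same pierced form with two inserted letters), follows exactly as in the moment bound at the end of the proof of Corollary~\ref{cor:brown_sig_def}. The same argument gives $\sigX[\cdot]\in\mathbb{L}^{1,2}$ elementwise.

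Next we identify the traces. In the limit $u\downarrow s$ with $r=s<u$, the first formula gives
\[
\sigW[s]\otimes\word{i}\otimes\sigW[s,u]\otimes\word{v}\otimes\sigW[u,t].
\]
We claim this converges in $L^2$, uniformly in $s$, to $\sigW[s]\otimes\word{iv}\otimes\sigW[s,t]$. Write $\sigW[u,t]=\sigW[s,u]^{-1}\otimes\sigW[s,t]$. It then suffices to show that for each fixed coordinate $\word{w}$, the quantity $(\sigW[s,u]-\emptyword)^{\word{w}}$ and the corresponding inverse term tend to $0$ in $L^p$ uniformly in $s$ as $|u-s|\to0$. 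This follows from the diffusive scaling $\sigW[s,u]^{\word{w}}\overset{d}{=}(u-s)^{(|\word{w}|+|\word{w}|_{\word{0}})/2}\sigW[1]^{\word{w}}$, valid for nonempty $\word{w}$ and likewise for the reversed path. Each coordinate of the product is a finite sum of products of such terms with uniformly bounded moments, so Hölder's inequality yields the convergence required in \eqref{eq:strato_reg_assump}. The case $u\uparrow s$ with $r=s>u$ is symmetric: the second formula collapses to $\sigW[s]\otimes\word{vi}\otimes\sigW[s,t]$.

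Finally, we verify the second-order trace condition \eqref{eq:strato_reg_assump_second}, together with the facts $\D^{\pm,i}\sigX[\cdot]\in\mathbb{L}^{1,2}$ needed for the class $\mathbb{L}^{1,2}_2$. Here $\D_r\D^i_s\sigX[u]$ is a pierced signature with letters inserted at $r$ and $s$, and $\D_r$ of the candidate traces is computed in the same way. The same scaling argument as $u\to s$ gives convergence in $L^2$ for each fixed $r\ne s$. We then integrate over $r$ by dominated convergence, using the uniform moment bounds from the first step.

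The main obstacle is bookkeeping rather than analysis. The delicate point is the uniformity of the $L^2$ convergence in $s$, including near the endpoints $0$ and $t$, where the truncation $\D_r=0$ for $r>t$ matters. Scaling combined with Hölder should handle this, since all moments of $\sigW[1]$ are finite.
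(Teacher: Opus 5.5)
Your proposal is correct and follows essentially the paper's route: compute $\D^i_s$ of the process via the pierced formula, then use Chen's identity and the diffusive scaling of $\sigW[s,u]$ to get the $L^2$ trace convergence uniformly in $s$. The only difference is minor and technical. You write $\sigW[u,t]=\sigW[s,u]^{-1}\otimes\sigW[s,t]$ and then need H\"older with higher moments, since the factors are no longer independent. The paper instead writes $\sigW[s,t]=\sigW[s,u]\otimes\sigW[u,t]$, isolates the commutator $\sigW[s,u]\otimes\word{v}-\word{v}\otimes\sigW[s,u]$, and uses independence of $\sigW[s]$, $\sigW[s,u]$, $\sigW[u,t]$ to factor second moments. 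Your extra verification of \eqref{eq:strato_reg_assump_second} is harmless but not required for $\mathbb{L}^{1,2}_2$ as the paper defines it.
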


\begin{proof}
For $0 < s < u < t$, we have
\[
\D_s^i \big(\sigW[u]\otimes\word{v}\otimes\sigW[u,t]\big)
= \sigW[s]\otimes\word{i}\otimes\sigW[s,u]\otimes\word{v}\otimes\sigW[u,t].
\]
We claim that for all $\word{w} \in V$,
\[
\E\left[
\left(
\D_s^i \big(\sigW[u]\otimes\word{v}\otimes\sigW[u,t]\big)^{\word{w}}
- (\sigW[s]\otimes\word{iv}\otimes\sigW[s,t])^{\word{w}}
\right)^2
\right]
\to 0,
\]
as $|u-s|\to 0$. Indeed, we have
$$
\D_s^i \big(\sigW[u]\otimes\word{v}\otimes\sigW[u,t]\big)
- \sigW[s]\otimes\word{iv}\otimes\sigW[s,t]
=
\sigW[s]\otimes\word{i}\otimes
\big(\sigW[s,u]\otimes\word{v}-\word{v}\otimes\sigW[s,u]\big)
\otimes\sigW[u,t],
$$
so that
\begin{align}
&\E\left[
\left(
\D_s^i \big(\sigW[u]\otimes\word{v}\otimes\sigW[u,t]\big)^{\word{w}}
- (\sigW[s]\otimes\word{iv}\otimes\sigW[s,t])^{\word{w}}
\right)^2
\right] \\
&= \E\left[
\left(\sum_{\substack{\word{w_1}, \word{w_2'}, \word{w_2''}, \word{w_3} \\
\word{w_1iw_2'vw_3 = w_1ivw_2''w_3 = w}}}
\sigW[s]^{\word{w_1}}
\big(\sigW[s,u]^{\word{w_2'}}-\sigW[s,u]^{\word{w_2''}}\big)
\sigW[u,t]^{\word{w_3}}
\right)^2
\right] \\
&\leq \E\left[
\left(\sum_{\substack{\word{w_1}, \word{w_2'}, \word{w_2''}, \word{w_3} \\
\word{|\word{w_1}|, |\word{w_3}| < |w|} \\
\word{0 < |\word{w_2'}|, |\word{w_2''}| < |w|}
}}
\big|\sigW[s]^{\word{w_1}}\big|
\big|\sigW[s,u]^{\word{w_2'}}-\sigW[s,u]^{\word{w_2''}}\big|
\big|\sigW[u,t]^{\word{w_3}}\big|
\right)^2
\right] \\
&\leq 
4\E\left[
\left(\sum_{
\word{|\word{w_1}| < |w|}}
\big|\sigW[s]^{\word{w_1}}\big|\right)^2\right] 
\E\left[
\left(\sum_{
\word{|\word{w_3}| < |w|}}
\big|\sigW[u, t]^{\word{w_3}}\big|\right)^2\right] 
\E\left[
\left(\sum_{
\word{0 < |\word{w_2}| < |w|}}
\big|\sigW[s, u]^{\word{w_2}}\big|\right)^2\right] \\
&\leq 
4\E\left[
\left(\sum_{
\word{|\word{w_1}| < |w|}}
\big|\sigW[T]^{\word{w_1}}\big|\right)^2\right]^2
\E\left[
\left(\sum_{
\word{0 < |\word{w_2}| < |w|}}
\big|\sigW[s, u]^{\word{w_2}}\big|\right)^2\right],
\end{align}
where, in the last inequality, we used the diffusive scaling of the signature.
The last expression depends only on $|u-s|$ and converges to $0$ as $|s-u|\to 0$. Hence, the limit in \eqref{eq:strato_reg_assump} vanishes. The proof for $0<u<s<t$ and $\D^{-,i}$ is analogous.
\end{proof}

Applying Lemma~\ref{lem:mal_trace} to $\sigW[\cdot]\otimes\word{i}\otimes\sigW[\cdot,t]$ yields
\[
\left(\D^{+,i} \Big(\sigW[\cdot]\otimes\word{i}\otimes\sigW[\cdot,t]\Big)\right)_u + \left(\D^{-,i} \Big(\sigW[\cdot]\otimes\word{i}\otimes\sigW[\cdot,t]\Big)\right)_u 
= 2\big(\sigW[u]\otimes\word{ii}\otimes\sigW[u,t]\big).
\]

The Skorokhod integral \eqref{eq:skor_of_der} can therefore be expressed in terms of the Stratonovich integral using \eqref{eq:skor_strato}:
\begin{align}\label{eq:rmk_skorokhod}
\delta^i\Big(\sigW[\cdot]\otimes\word{i}\otimes\sigW[\cdot,t]\Big)
&= \int_0^t
\big(\sigW[u]\otimes\word{i}\otimes\sigW[u,t]\big)\circ dW_u^i
- \int_0^t
\big(\sigW[u]\otimes\word{ii}\otimes\sigW[u,t]\big)\,du  \\
&= {\bf \Psi}^{\word{i}}_{\word{i}}(\sigW)
- {\bf \Psi}^{\word{0}}_{\word{ii}}(\sigW) \\
&= {\bf \Lambda}_{\word{i}}(\sigW)
- {\bf \Psi}^{\word{0}}_{\word{ii}}(\sigW),
\end{align}
where we used Theorem~\ref{thm:iter_int_of_mal_der} and observed that
${\bf \Psi}^{\word{i}}_{\word{i}} = {\bf \Lambda}_{\word{i}}$.
This recovers the formula obtained in Corollary~\ref{cor:coordintate_OU}.

\section{Integration by parts}\label{sect:ibp}
For conciseness of notation, we will denote by $\langle \cdot, \cdot \rangle_{L^2}$ the scalar product in the space $L^{2}(\Omega \times [0,T], \R^{d})$. That is, for $h = (h^1, \ldots, h^d)$ and $g = (g^1, \ldots, g^d)$ in $L^{2}(\Omega \times [0,T], \R^{d})$, we set
$$
\langle h, g \rangle_{L^2} = \int_0^T \langle h_t, g_t \rangle_{\R^d}\, dt.
$$

The following proposition (a multidimensional version of \citet[Proposition 6.2.1]{nualart2006malliavin}) allows for computation of Malliavin weights in the integration-by-parts formula \eqref{eq:IBP_general}.

\begin{proposition}\label{ibp_proposition}
    Let $G$ and $F$ be random variables such that $G \in \mathbb{D}^{1,2}(\mathbb{R})$, and let $h \in L^{2}(\Omega \times [0,T], \R^{d})$ be such that $\langle \D G, h\rangle_{L^2} \not= 0$ a.s. and $\dfrac{Fh}{\langle \D G, h\rangle_{L^2}} \in \mathrm{Dom}(\delta)$. Then, for any $f\in C^1_b(\R, \R)$ we have
    \begin{equation}\label{eq:delta_IBP}
        \E[f'(G)F] = \E\left[f(G)\delta\left(\dfrac{Fh}{\langle \D G, h \rangle_{L^2}}\right)\right].
    \end{equation}
    Moreover, if $\dfrac{F}{\langle \D G, h\rangle_{L^2}} \in \mathbb{D}^{1, 2}$ and $h \in \mathrm{Dom}(\delta)$, then
    \begin{equation}\label{eq:universal_delta}
        \E\left[f'(G)F\right] = 
        \E\left[
        f(G)\left(
        \dfrac{F\delta(h)}{\langle \D G, h \rangle_{L^2}}
        - \dfrac{\langle \D F, h\rangle_{L^2}}{\langle \D G, h\rangle_{L^2}}
        + \dfrac{F\left\langle \D\langle \D G, h\rangle_{L^2}, h\right\rangle_{L^2}}{\langle \D G, h\rangle_{L^2}^2}
        \right)\right].
    \end{equation}
\end{proposition}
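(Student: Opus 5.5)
The plan is the standard one from Nualart's Proposition~6.2.1, carried out in the multidimensional setting of $L^2(\Omega\times[0,T],\R^d)$. First I will use the chain rule \eqref{eq:chain_rule} to compute $\D(f(G)) = f'(G)\,\D G$. This holds for $f\in C^1_b$ and $G\in\mathbb{D}^{1,2}$ by the usual approximation argument: mollify $f$ and pass to the limit using closability of $\D$. Pairing with $h$ in $L^2$ gives the pointwise identity
\[
\langle \D f(G), h\rangle_{L^2} = f'(G)\,\langle \D G, h\rangle_{L^2}.
\]
Since $\langle \D G, h\rangle_{L^2}\neq 0$ a.s., I can divide and write
\[
f'(G)F = \Bigl\langle \D f(G),\, \frac{Fh}{\langle \D G, h\rangle_{L^2}}\Bigr\rangle_{L^2}
\]
almost surely. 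Taking expectations and applying the duality relation \eqref{def:integ_skorohod_dual} with the element $u:=Fh/\langle \D G,h\rangle_{L^2}\in\mathrm{Dom}(\delta)$ and the test variable $f(G)\in\mathbb{D}^{1,2}$ gives \eqref{eq:delta_IBP}. The point to check is that $u$ lies in $\mathrm{Dom}(\delta)$, so that the left-hand side is integrable; this is exactly the hypothesis.

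For \eqref{eq:universal_delta}, set $\Phi := F/\langle \D G,h\rangle_{L^2}\in\mathbb{D}^{1,2}$. I will apply the product formula \eqref{eq:IBP_formula}:
\[
\delta(\Phi h) = \Phi\,\delta(h) - \langle \D\Phi, h\rangle_{L^2}.
\]
Then I compute $\D\Phi$ by the quotient rule:
\[
\D\Phi = \frac{\D F}{\langle \D G,h\rangle_{L^2}} - \frac{F\,\D\langle \D G,h\rangle_{L^2}}{\langle \D G,h\rangle_{L^2}^2}.
\]
Pairing this with $h$ and substituting into \eqref{eq:delta_IBP} yields the three-term weight.

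\textbf{Main obstacle.} The delicate point is justifying the quotient rule and the splitting of $\D\Phi$. The function $x\mapsto 1/x$ is not in $C^1_b$, and $F$ and $\langle \D G,h\rangle_{L^2}$ need not be separately nice. My reading of the statement is that the formula is meant under the implicit requirement that each term makes sense, namely that $F\in\mathbb{D}^{1,2}$ and $\langle \D G,h\rangle_{L^2}\in\mathbb{D}^{1,2}$, as needed for the right-hand side to be defined.

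To handle this, I will first apply the chain rule with the truncated function $\varphi_\varepsilon(x,y)=x/(y^2+\varepsilon)^{1/2}\cdot\mathrm{sgn}$-type smoothings, or more simply $x\,\psi_\varepsilon(y)$ with $\psi_\varepsilon$ a bounded $C^1$ approximation of $1/y$. I will then let $\varepsilon\to0$, using closability of $\D$ together with the assumption $\Phi\in\mathbb{D}^{1,2}$ to identify the limit.

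The remaining steps are routine. The product formula requires $\Phi h\in L^2$ and square integrability of the right-hand side. Both are guaranteed because $\Phi h\in\mathrm{Dom}(\delta)$ and $\Phi\delta(h)-\langle\D\Phi,h\rangle_{L^2}$ equals $\delta(\Phi h)\in L^2$; this is the content of Nualart's Proposition~1.3.3, which I invoke as stated.
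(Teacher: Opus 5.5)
This is the paper's argument. You use the chain rule to write $f'(G)F=\bigl\langle \D f(G),\,Fh/\langle \D G,h\rangle_{L^2}\bigr\rangle_{L^2}$, duality \eqref{def:integ_skorohod_dual} for \eqref{eq:delta_IBP}, and then \eqref{eq:IBP_formula} plus the quotient rule (which the paper simply calls a direct computation) for \eqref{eq:universal_delta}. Two of your extra justifications need adjusting. First, deducing the square-integrability hypothesis of \eqref{eq:IBP_formula} from its own conclusion is circular; instead pair against bounded smooth functionals, using that $\Phi h\in\mathrm{Dom}(\delta)$ is already assumed. Second, for the quotient rule, rely on the locality of $\D$ on the sets $\{|\langle \D G,h\rangle_{L^2}|>\eta\}$ rather than on closability, since nothing guarantees that your approximating derivatives converge in $L^2$.
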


\begin{proof}
    By the chain rule, $\bracket{\D f(G)}{h }_{L^2} = f'(G)\bracket{\D G}{h}_{L^2}$, so that
    \[
    f'(G) = \dfrac{\bracket{\D f(G)}{h}_{L^2}}{\bracket{\D G}{h}_{L^2}},
    \]
    and
    \begin{align}
        \E[f'(G)F] 
        &= \E\left[\dfrac{\bracket{\D f(G)}{h}_{L^2}}{\bracket{\D G}{h }_{L^2}}F\right] 
        = \E\left[f(G)\delta\left(\dfrac{F h}{\bracket{\D G}{h }_{L^2}}\right)\right],
    \end{align}
    where we used the definition of the Skorokhod integral \eqref{def:integ_skorohod_dual}. The formula \eqref{eq:IBP_formula} yields
    \begin{align}
        \E[f'(G)F]
        &= \E\left[f(G)\left(
        \dfrac{F\delta(h)}{\bracket{\D G}{h}_{L^2}}
        - \bracket{\D\left(\dfrac{F}{\bracket{\D G}{h}_{L^2}}\right)}{h}_{L^2}
        \right)\right],
    \end{align}
    and \eqref{eq:universal_delta} follows from a direct computation of the Malliavin derivative inside the integral.
\end{proof}

\subsection{Linearization of the $L^2$ scalar product}

Typically, computing $\langle \D G , h\rangle_{L^2}$ and $\left\langle \D \langle \D G, h\rangle_{L^2}, h\right\rangle_{L^2}$, which appear in \eqref{eq:universal_delta}, is non-trivial, and their numerical evaluation may present a significant challenge. However, this computation is significantly simplified if the process $h$ is chosen in such a way that it allows one to linearize $\langle \D G , h\rangle_{L^2}$, i.e., to express it as a linear functional of $\sigW[T]$. Assuming additionally that both $G$ and $F$ are linear or rational (i.e., ratios of two linear functionals) functions of $\sigW[T]$ makes it possible to rewrite the weight given by \eqref{eq:universal_delta} as a rational function of $\sigW[T]$ as well.

We start by considering two random variables $F, F' \in \mathcal{P}(\sigW[T])$ with corresponding coefficients $\bell, \bell' \in \TA[d+1]$, and we introduce an operation $\diamond\colon \TA\times\TA \to \TA$ that allows us to linearize the scalar product $\bracket{\D F}{\D F'}_{L^2}$, i.e., to express it as
\begin{equation}
    \bracket{\D F}{\D F'}_{L^2}
    = \bracket{\D\langle\bell, \sigW[T]\rangle}{\D \langle\bell', \sigW[T]\rangle}_{L^2} 
    = \bracketsigW[T]{\bell \diamond \bell'}.
\end{equation}

\begin{definition}
    Fix three words $\word{u_1}, \word{u_2}, \word{w} \in V$. For two words $\word{v}, \word{v'}$, define $\diamond^{\word{u_1}, \word{u_2}}_{\word{w}}: \TA\times\TA\to\TA$ by
    \begin{equation}\label{eq:diamond_def}
        \word{v} \diamond^{\word{u_1}, \word{u_2}}_{\word{w}}\word{v'} 
        = \sum_{\substack{\word{v_1}, \word{v_2}:\\\word{v_1u_1v_2} = \word{v}}}
          \sum_{\substack{\word{v_1'}, \word{v_2'}:\\\word{v_1'u_2v_2'} = \word{v'}}}
          (\word{v_1}\shuprod\word{v_1'})\otimes\word{w}\otimes(\word{v_2}\shuprod\word{v_2'}).
    \end{equation}
    For two tensor sequences $\bell, \bell' \in \TA$, the operator is extended by bilinearity. We also define $\diamond := \sum_{i = 1}^d \diamond^{\word{i}, \word{i}}_{\word{0}}$.
\end{definition}

The proposition below clarifies the link between this algebraic operation and the integration of the product of linear functionals of two ``pierced'' signatures $\sigX[t]\otimes\word{u}\otimes\sigX[t, T]$.

\begin{proposition}\label{prop:diamond} 
Let $X$ be an $\R^d$-valued continuous semimartingale. Then, for all words $\word{u_1}, \word{u_2} \in V$, letter $\word{k} \in \alphabet[]$, and $\bell, \bell' \in \TA$,
\begin{equation}\label{eq:diamond_prod_prop}
    \int_0^T\langle\bell, \sigX[t]\otimes\word{u_1}\otimes\sigX[t, T]\rangle \,\langle\bell', \sigX[t]\otimes\word{u_2}\otimes\sigX[t, T]\rangle \circ dX_t^k 
    = \bracketsigX[T]{\bell \diamond^{\word{u_1}, \word{u_2}}_{\word{k}} \bell'}.
\end{equation}
\end{proposition}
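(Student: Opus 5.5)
By bilinearity of both sides in $(\bell,\bell')$, we may take $\bell = \word{v}$ and $\bell' = \word{v'}$ to be single words. The main computation is then to expand the two pairings coordinatewise, turn the resulting products of signature entries into single entries via the shuffle property, and finish with Lemma~\ref{lem:inserted_int}.

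\textbf{Expanding the pairings.} From the definition of the tensor product,
\[
\langle \word{v}, \sigX[t]\otimes\word{u_1}\otimes\sigX[t,T]\rangle = \sum_{\word{v_1u_1v_2} = \word{v}} \sigX[t]^{\word{v_1}}\,\sigX[t,T]^{\word{v_2}}.
\]
The analogous expression holds for $\word{v'}$ with $\word{u_2}$, summing over $\word{v_1'u_2v_2'} = \word{v'}$. Both sums are finite.

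\textbf{Regrouping the product.} Multiplying the two expansions, the integrand becomes a finite sum of terms
\[
\big(\sigX[t]^{\word{v_1}}\sigX[t]^{\word{v_1'}}\big)\big(\sigX[t,T]^{\word{v_2}}\sigX[t,T]^{\word{v_2'}}\big).
\]
For the first factor, Proposition~\ref{prop:shuffle} applied on $[0,t]$ gives $\sigX[t]^{\word{v_1}}\sigX[t]^{\word{v_1'}} = \bracketsigX[t]{\word{v_1}\shuprod\word{v_1'}}$.

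For the second factor we need the shuffle identity on the interval $[t,T]$. This holds because $\sigX[t,T]$ is the signature of the shifted semimartingale $u\mapsto X_{t+u}-X_t$. Alternatively, apply Chen's identity~\eqref{eq:chen} together with the fact that the shuffle property is pathwise. Either way,
\[
\sigX[t,T]^{\word{v_2}}\sigX[t,T]^{\word{v_2'}} = \bracketsigX[t,T]{\word{v_2}\shuprod\word{v_2'}}.
\]
This is the step that needs some care. The identity is needed for all $t$ simultaneously, up to indistinguishability, so that it can be used inside a stochastic integral. I would get this from continuity in $t$ of both sides together with an a.s.\ identity at each rational $t$.

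\textbf{Integrating.} Each term is now of the form $\bracketsigX[t]{\bp_1}\,\bracketsigX[t,T]{\bp_2}\circ dX^k_t$, with
\[
\bp_1 = \word{v_1}\shuprod\word{v_1'}, \qquad \bp_2 = \word{v_2}\shuprod\word{v_2'}.
\]
The second part of Lemma~\ref{lem:inserted_int}, applied with $j=k$ at time $T$, gives
\[
\int_0^T \bracketsigX[t]{\bp_1}\,\bracketsigX[t,T]{\bp_2}\circ dX^k_t = \bracketsigX[T]{\bp_1\otimes\word{k}\otimes\bp_2}.
\]
Summing over all decompositions of $\word{v}$ and $\word{v'}$ recovers exactly $\word{v}\diamond^{\word{u_1},\word{u_2}}_{\word{k}}\word{v'}$ as defined in~\eqref{eq:diamond_def}. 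Extending bilinearly to general $\bell,\bell'$ gives~\eqref{eq:diamond_prod_prop}.

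\textbf{Main obstacle.} The only delicate point is the meaning of the anticipating integrand $\sigX[t,T]$ inside the Stratonovich integral. Here I would follow the paper's convention from the remark after Theorem~\ref{thm:malliavindersig} and \eqref{eq:stoch_int_correct}, which is the same convention already used in the proof of Lemma~\ref{lem:inserted_int}. Concretely, write
\[
\bracketsigX[t,T]{\bp_2} = \langle \bp_2, \sigX[t]^{-1}\otimes\sigX[T]\rangle,
\]
so that the integral is understood with the $\sigX[T]$ factor pulled outside. With this interpretation, linearity over the finite sums is immediate.
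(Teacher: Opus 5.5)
Your proposal is correct and follows the paper's proof step for step: you reduce to single words by bilinearity, expand both pairings over the decompositions of $\word{v}$ and $\word{v'}$, merge the products on $[0,t]$ and on $[t,T]$ with the shuffle property, and finish with Lemma~\ref{lem:inserted_int} to recover the definition of $\diamond^{\word{u_1},\word{u_2}}_{\word{k}}$. Your extra remarks, on the identities holding for all $t$ simultaneously and on how to read the anticipating factor $\sigX[t,T]$, refine points the paper leaves implicit; they do not change the argument.
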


\begin{proof}
    Since the integral in \eqref{eq:diamond_prod_prop} is a bilinear functional of $\bell$ and $\bell'$, it suffices to consider the case $\bell = \word{v} \in V$ and $\bell' = \word{v'} \in V$. Then the integrand expands as
    \begin{align}
        &\int_0^T\langle\word{v}, \sigX[t]\otimes\word{u_1}\otimes\sigX[t, T]\rangle \,\langle\word{v'}, \sigX[t]\otimes\word{u_2}\otimes\sigX[t, T]\rangle \circ dX_t^k \\
        &\quad=\sum_{\substack{\word{v_1}, \word{v_2}:\\\word{v_1u_1v_2 = v}}}
                \sum_{\substack{\word{v_1'}, \word{v_2'}:\\\word{v_1'u_2v_2' = v'}}}
                \int_0^T \sigX[t]^{\word{v_1}}\sigX[t]^{\word{v_1'}}\sigX[t,T]^{\word{v_2}}\sigX[t,T]^{\word{v_2'}} \circ dX_t^k \\
        &\quad= \sum_{\substack{\word{v_1}, \word{v_2}:\\\word{v_1u_1v_2 = v}}}
                \sum_{\substack{\word{v_1'}, \word{v_2'}:\\\word{v_1'u_2v_2' = v'}}}
                \int_0^T \bracketsigX{\word{v_1}\shuprod\word{v_1'}}\,\bracketsigX[t, T]{\word{v_2}\shuprod\word{v_2'}} \circ dX_t^k \\
        &\quad= \sum_{\substack{\word{v_1}, \word{v_2}:\\\word{v_1u_1v_2 = v}}}
                \sum_{\substack{\word{v_1'}, \word{v_2'}:\\\word{v_1'u_2v_2' = v'}}}
                \bracketsigX[T]{(\word{v_1}\shuprod\word{v_1'})\word{k}(\word{v_2}\shuprod\word{v_2'})} \\
        &\quad= \bracketsigX[T]{\word{v}\diamond^{\word{u_1}, \word{u_2}}_{\word{k}}\word{v'}},
    \end{align}
    where Lemma~\ref{lem:inserted_int} is used in the third equality. This proves \eqref{eq:diamond_prod_prop}.
\end{proof}

\begin{corollary}\label{cor:scal_prod_lin}
    Let $F, F' \in \mathcal{P}(\sigW[T])$ with the coefficients $\bell, \bell' \in \TA[d+1]$. Then
    \begin{equation}\label{eq:L2_linearization}
        \langle \D F, \D F' \rangle_{L^2} = \bracketsigW[T]{\bell\diamond\bell'}.
    \end{equation}
\end{corollary}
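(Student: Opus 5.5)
By definition, $\langle \D F, \D F'\rangle_{L^2} = \sum_{i=1}^d \int_0^T \D_t^i F\, \D_t^i F'\, dt$, so the plan is to compute each summand with Corollary~\ref{cor:brown_sig_def} and then apply Proposition~\ref{prop:diamond} with $X = \widehat{W}$ and integrating letter $\word{k} = \word{0}$.

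First, by linearity of the pairing and \eqref{eq:malliavin_der_solution}, for a.e. $t \in [0,T]$,
\[
\D_t^i F = \bracket{\bell}{\sigW[t]\otimes\word{i}\otimes\sigW[t,T]},
\qquad
\D_t^i F' = \bracket{\bell'}{\sigW[t]\otimes\word{i}\otimes\sigW[t,T]}.
\]
The pairing is a finite sum because $\bell, \bell' \in \TA[d+1]$. Hence
\[
\int_0^T \D_t^i F\,\D_t^i F'\,dt = \int_0^T \bracket{\bell}{\sigW[t]\otimes\word{i}\otimes\sigW[t,T]}\,\bracket{\bell'}{\sigW[t]\otimes\word{i}\otimes\sigW[t,T]}\,d\widehat{W}^0_t.
\]
Since $\widehat{W}^0_t = t$ has finite variation, the Stratonovich integral against it coincides with the Lebesgue integral. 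Proposition~\ref{prop:diamond} with $\word{u_1} = \word{u_2} = \word{i}$ and $\word{k} = \word{0}$ then gives $\bracketsigW[T]{\bell\diamond^{\word{i},\word{i}}_{\word{0}}\bell'}$. Summing over $i$ and using $\diamond = \sum_{i=1}^d \diamond^{\word{i},\word{i}}_{\word{0}}$ yields \eqref{eq:L2_linearization}.

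The main point to check is that Proposition~\ref{prop:diamond} is stated for a general $\R^d$-valued continuous semimartingale $X$. We apply it to the $(d+1)$-dimensional semimartingale $\widehat{W}$ on the alphabet $\{\word{0},\ldots,\word{d}\}$.

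The integrand in that proposition involves the anticipative factor $\sigW[t,T]$. Its proof, however, uses only Lemma~\ref{lem:inserted_int} applied to $\sigX[t]^{\word{v_1}}\sigX[t,T]^{\word{v_2}}$, together with the shuffle property (Proposition~\ref{prop:shuffle}) on $[0,t]$ and on $[t,T]$. For the time letter this is harmless: the integral against $dt$ is an ordinary pathwise Lebesgue integral, so the Fubini step in Lemma~\ref{lem:inserted_int} is a deterministic, pathwise one.

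The a.e.-in-$t$ qualifier in Corollary~\ref{cor:brown_sig_def} does not affect the $dt$-integral. Square-integrability of $\D F$ and $\D F'$, which is needed for the $L^2$ product to make sense, follows from $F, F' \in \mathbb{D}^{1,2}$, itself a consequence of Corollary~\ref{cor:brown_sig_def}.
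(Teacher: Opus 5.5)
Your proposal is correct and matches the paper's proof: you write $\D_t^iF$ and $\D_t^iF'$ via \eqref{eq:malliavin_der_solution}, apply Proposition~\ref{prop:diamond} to $X=\widehat W$ with $\word{u_1}=\word{u_2}=\word{i}$ and $\word{k}=\word{0}$, and sum over $i$. One small correction to a side remark: the interchange in the induction step of Lemma~\ref{lem:inserted_int} swaps the $dt$-integral with Stratonovich integrals in the Brownian letters of $\word{v_2}$, so it is a stochastic Fubini argument rather than a purely pathwise one. Since you invoke Proposition~\ref{prop:diamond} as stated for general continuous semimartingales, this does not affect your proof.
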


\begin{proof}
    We have
    \begin{align*}
        \langle \D F, \D F' \rangle_{L^2} 
        &= \sum_{i = 1}^d\int_0^T \langle\bell, \D^i_t\sigW[T]\rangle \,\langle\bell', \D^i_t\sigW[T]\rangle \,dt \\
        &= \sum_{i = 1}^d\int_0^T \langle\bell, \sigW[t]\otimes\word{i}\otimes\sigW[t,T]\rangle \,\langle\bell', \sigW[t]\otimes\word{i}\otimes\sigW[t,T]\rangle \,dt \\
        &= \sum_{i = 1}^d\bracketsigW[T]{\bell \diamond^{\word{i}, \word{i}}_{\word{0}} \bell'} = \bracketsigW[T]{\bell \diamond\bell'},
    \end{align*}
    where we used Proposition~\ref{prop:diamond} and the definition of $\diamond$.
\end{proof}

\begin{corollary}\label{cor:diamond_letter}
    Fix $\word{i}, \word{j}, \word{k} \in A$ and $\bell \in \TA[d+1]$. Then $\bell\diamond^{\word{i}, \word{j}}_{\word{k}}\word{j} = {\bf \Psi}^{\word{i}}_{\word{k}}(\bell)$.
\end{corollary}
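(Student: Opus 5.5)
By bilinearity of $\diamond^{\word{i},\word{j}}_{\word{k}}$ and linearity of ${\bf \Psi}^{\word{i}}_{\word{k}}$, it suffices to prove the identity for a single word, $\bell = \word{v} \in V$. We then compute both sides directly from the definitions.

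\textbf{Left-hand side.} We apply the definition \eqref{eq:diamond_def} with $\word{v'} = \word{j}$ and $\word{u_2} = \word{j}$. The inner sum runs over factorizations $\word{v_1'}\word{j}\word{v_2'} = \word{j}$. Since $\word{j}$ is a single letter, the only such factorization is $\word{v_1'} = \word{v_2'} = \emptyword$. The shuffle product has $\emptyword$ as unit, so $\word{v_1}\shuprod\emptyword = \word{v_1}$ and $\word{v_2}\shuprod\emptyword = \word{v_2}$. This gives
\[
\word{v}\diamond^{\word{i},\word{j}}_{\word{k}}\word{j} = \sum_{\substack{\word{v_1},\word{v_2}:\\ \word{v_1 i v_2} = \word{v}}} \word{v_1}\word{k}\word{v_2}.
\]

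\textbf{Right-hand side.} By the basis-vector form of Definition~\ref{def:psi_def} with $n=1$, ${\bf\Psi}^{\word{i}}_{\word{k}}(\word{v}) = \sum_{\word{v_0 i v_1} = \word{v}} \word{v_0 k v_1}$. This is the same sum, so the two sides agree on words and hence, by linearity, on all of $\TA[d+1]$.

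As a cross-check, the identity can also be read off probabilistically from Proposition~\ref{prop:diamond}, since $\langle \word{j}, \sigX[t]\otimes\word{j}\otimes\sigX[t,T]\rangle = 1$. Then Lemma~\ref{lem:inserted_int} together with Proposition~\ref{prop:psi_adjoint} identifies the left side of \eqref{eq:diamond_prod_prop} with $\bracketsigX[T]{{\bf\Psi}^{\word{i}}_{\word{k}}(\bell)}$. The algebraic argument is cleaner, however, and avoids any appeal to uniqueness of coefficients.

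There is no real obstacle. The only point to check is that the inner sum collapses to the single empty-word factorization, which holds because $\word{j}$ has length one.
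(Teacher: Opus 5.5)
Your proof is correct, but it follows a different route from the paper. The paper argues probabilistically. For an arbitrary continuous semimartingale $X$, it uses Proposition~\ref{prop:diamond} to write $\bracketsigX[T]{\bell\diamond^{\word{i},\word{j}}_{\word{k}}\word{j}}$ as $\int_0^T \langle\bell, \sigX[t]\otimes\word{i}\otimes\sigX[t,T]\rangle \circ dX^k_t$, since the second factor $\langle\word{j},\sigX[t]\otimes\word{j}\otimes\sigX[t,T]\rangle$ is identically $1$. It then identifies this integral with $\bracketsigX[T]{{\bf\Psi}^{\word{i}}_{\word{k}}(\bell)}$ via Theorem~\ref{thm:iterated_psi} and the adjoint relation. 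Finally, it concludes that the two tensors coincide because the identity holds for every $X$. That last uniqueness-of-coefficients step is left implicit; it is true (test against piecewise linear paths, for instance) but needs an argument.

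Your computation works directly at the level of words. The factorization $\word{v_1'}\word{j}\word{v_2'}=\word{j}$ forces $\word{v_1'}=\word{v_2'}=\emptyword$, the empty word is the shuffle unit, and what remains is exactly the basis-vector formula for ${\bf\Psi}^{\word{i}}_{\word{k}}$. You also read the index convention correctly: the upper letter is located and the lower letter is inserted. This route is shorter, purely combinatorial, and avoids the uniqueness step entirely.

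The paper's route instead gives an interpretation: multiplying by a constant functional inside the integral of Proposition~\ref{prop:diamond}. It also matches the paper's general strategy of proving tensor identities by testing them against signatures. That strategy pays off for the carr\'e du champ formula for $\diamond$, where a direct combinatorial check would be much messier.
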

\begin{proof}
    It is sufficient to observe that for all continuous semimartingales $X$, we have
    $$
    \bracketsigX[T]{\bell \diamond^{\word{i}, \word{j}}_{\word{k}} \word{j}}
    = \int_0^T \langle\bell, \sigX[t]\otimes\word{i}\otimes\sigX[t, T]\rangle \,\underbrace{\langle\word{j}, \sigX[t]\otimes\word{j}\otimes\sigX[t, T]\rangle}_{=1} \circ dX_t^k
    = \bracketsigX[T]{{\bf \Psi}^{\word{i}}_{\word{k}}(\bell)},
    $$
    where the last equality follows from Theorem~\ref{thm:iterated_psi}.
\end{proof}

Although it enjoys the desired linearization property \eqref{eq:L2_linearization}, the direct computation of $\diamond$ via its definition \eqref{eq:diamond_def} is rather cumbersome and numerically intricate. The following proposition shows that $\diamond$ can be expressed in terms of the switching operators ${\bf \Psi}^{\word{ii}}_{\word{0}}$, enabling a significantly more efficient numerical implementation.

\begin{proposition}
    If $\word{i}, \word{k} \in A$ and $\bell, \bell' \in \TA[d+1]$, then the diamond product $\diamond^{\word{i}, \word{i}}_{\word{k}}$ can be written in carré du champ form:
\begin{equation}\label{eq:diamond_simplified}
    \bell \diamond^{\word{i}, \word{i}}_{\word{k}} \bell'
    = \dfrac{1}{2}\left({\bf \Psi}^{\word{ii}}_{\word{k}}(\bell\shuprod\bell') - {\bf \Psi}^{\word{ii}}_{\word{k}}(\bell)\shuprod\bell' - \bell\shuprod{\bf \Psi}^{\word{ii}}_{\word{k}}(\bell')\right).
\end{equation}
In particular, we have
\begin{equation}
    \bell \diamond \bell'
    = \dfrac{1}{2}\sum_{i=1}^d\left({\bf \Psi}^{\word{ii}}_{\word{0}}(\bell\shuprod\bell') - {\bf \Psi}^{\word{ii}}_{\word{0}}(\bell)\shuprod\bell' - \bell\shuprod{\bf \Psi}^{\word{ii}}_{\word{0}}(\bell')\right).
\end{equation}
\end{proposition}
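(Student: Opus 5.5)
The identity is purely algebraic, so we prove it at the level of words. The plan is to establish a Leibniz-type ``second-order'' rule for ${\bf \Psi}^{\word{ii}}_{\word{k}}$ acting on shuffle products. By bilinearity of both sides it suffices to take $\bell = \word{v}$ and $\bell' = \word{v'}$ words. Recall that ${\bf \Psi}^{\word{ii}}_{\word{k}}(\word{w})$ sums over all ways to select two \emph{consecutive} occurrences of $\word{i}$ in $\word{w}$ and replace the pair by $\word{k}$. A word in $\word{v}\shuprod\word{v'}$ is an interleaving of $\word{v}$ and $\word{v'}$, with each letter labelled by its origin. For a consecutive pair $\word{ii}$ in such an interleaving there are three cases: both letters come from $\word{v}$, both come from $\word{v'}$, or one comes from each (in either order).

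\textbf{Step 1: same-origin pairs.} We claim that the pairs with both letters from $\word{v}$ and adjacent in $\word{v}$ contribute exactly ${\bf \Psi}^{\word{ii}}_{\word{k}}(\word{v})\shuprod\word{v'}$, and symmetrically for $\word{v'}$. The justification is the standard fact that shuffles of $\word{v_1 k v_2}$ with $\word{v'}$ correspond bijectively to shuffles of $\word{v_1 ii v_2}$ with $\word{v'}$ in which the two $\word{i}$'s remain adjacent. Pairs from the same word that are not adjacent in that word cannot be consecutive in the interleaving, so nothing else arises in these cases.

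\textbf{Step 2: mixed pairs.} Consider a pair in which one $\word{i}$ comes from $\word{v}$ (splitting $\word{v} = \word{v_1 i v_2}$) and the other from $\word{v'}$ (splitting $\word{v'} = \word{v_1' i v_2'}$), with the two letters adjacent in the interleaving. The letters before the pair form a shuffle of $\word{v_1}$ and $\word{v_1'}$, and the letters after it form a shuffle of $\word{v_2}$ and $\word{v_2'}$. The pair itself appears in two orders, ($\word{v}$-letter first) or ($\word{v'}$-letter first), and both are replaced by $\word{k}$. The mixed contribution is therefore
\[
2\sum (\word{v_1}\shuprod\word{v_1'})\,\word{k}\,(\word{v_2}\shuprod\word{v_2'}) = 2\,\word{v}\diamond^{\word{i},\word{i}}_{\word{k}}\word{v'}.
\]
Combining the three cases gives
\[
{\bf \Psi}^{\word{ii}}_{\word{k}}(\word{v}\shuprod\word{v'}) = {\bf \Psi}^{\word{ii}}_{\word{k}}(\word{v})\shuprod\word{v'} + \word{v}\shuprod{\bf \Psi}^{\word{ii}}_{\word{k}}(\word{v'}) + 2\,\word{v}\diamond^{\word{i},\word{i}}_{\word{k}}\word{v'},
\]
which rearranges to \eqref{eq:diamond_simplified}. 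Summing over $i$ with $\word{k}=\word{0}$ then yields the second formula.

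\textbf{Main obstacle.} The delicate part is the multiplicity bookkeeping, since $\shuprod$ produces words with multiplicity and repeated letters cause overcounting. We handle this by working with shuffles as labelled interleavings (subsets of positions), which gives honest bijections; the factor $2$ from the two orderings of a mixed pair has to be checked carefully. As a cross-check, and as an alternative proof, one can test the identity against $\sigX[T]$ for an arbitrary semimartingale $X$, for instance piecewise linear paths whose signatures separate $\TA$. Take $\bell\diamond^{\word{i},\word{i}}_{\word{k}}\bell'$ via Proposition~\ref{prop:diamond} and ${\bf \Psi}^{\word{ii}}_{\word{k}}$ via Theorem~\ref{thm:iterated_psi}, and view it as the second derivative of the product $\langle\bell,\cdot\rangle\langle\bell',\cdot\rangle$ under the insertion $\exp^\otimes(\varepsilon\word{i})$ at time $t$, following the perturbation remark. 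The product rule for the second derivative then gives exactly a ``$\Gamma$'' (carré du champ) cross term, which corresponds to the factor $\tfrac12$.
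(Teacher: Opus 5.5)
Your argument is correct, and it follows a different route from the paper's. The paper never works with words. It fixes a path $X$ and uses that $\sigX[t]\otimes\exp^\otimes(\varepsilon\word{i})\otimes\sigX[t,T]$ is group-like, so that pairing with $\bell\shuprod\bell'$ factorizes. It then differentiates twice at $\varepsilon=0$; the factor $2$ is the cross term of $(fg)''=f''g+2f'g'+fg''$. Next it integrates against $\circ dX^k_t$ using Theorem~\ref{thm:iterated_psi}, Proposition~\ref{prop:psi_adjoint} and Proposition~\ref{prop:diamond}. Finally it drops $X$ because the resulting identity holds for every path. This is essentially the ``cross-check'' you sketch at the end.

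Your main proof instead establishes directly, on labelled interleavings, the second-order Leibniz rule
\[
{\bf \Psi}^{\word{ii}}_{\word{k}}(\word{v}\shuprod\word{v'}) = {\bf \Psi}^{\word{ii}}_{\word{k}}(\word{v})\shuprod\word{v'} + \word{v}\shuprod{\bf \Psi}^{\word{ii}}_{\word{k}}(\word{v'}) + 2\,\word{v}\diamond^{\word{i},\word{i}}_{\word{k}}\word{v'}.
\]
Your bijections are right. A same-origin adjacent pair must already be adjacent in its source word, because interleaving preserves internal order. A mixed pair decomposes uniquely into a shuffle of the two prefixes, one of two orderings of the pair, and a shuffle of the two suffixes. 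This is exactly \eqref{eq:diamond_def}, with the two orderings giving the factor $2$.

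What your route buys is that it is elementary and self-contained. It needs no (anticipating) Stratonovich integration. It also avoids the fact, used implicitly in the paper's last step, that signatures of paths separate the points of $\TA$. And it works verbatim for arbitrary letters $\word{i},\word{k}$.

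What the paper's route buys is brevity, since it recycles results already proved. It also gives a conceptual reading of the name ``carr\'e du champ'': ${\bf \Psi}^{\word{ii}}_{\word{k}}$ is the integrated second derivative of the letter insertion $\exp^\otimes(\varepsilon\word{i})$. Your identity makes the same structure visible algebraically. $2\diamond^{\word{i},\word{i}}_{\word{k}}$ measures exactly the failure of ${\bf \Psi}^{\word{ii}}_{\word{k}}$ to be a derivation of $\shuprod$, whereas ${\bf \Psi}^{\word{i}}_{\word{k}}$ is one, by the same labelling argument as in your Step~1.
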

\begin{proof}
    For $t \in [0,T]$ and $\varepsilon > 0$, take an arbitrary continuous semimartingale $X$ and define
    \[
        h_t(\varepsilon) := \langle \bell \shuprod \bell',\, \sigX \otimes \exp^\otimes(\varepsilon\word{i}) \otimes \sigX[t, T]\rangle.
    \]
    Since \(\sigX \otimes \exp^\otimes(\varepsilon\word{i}) \otimes \sigX[t, T]\) is group-like (i.e., corresponds to the signature of a path as a product of signatures), it satisfies the shuffle property:
    \[
        h_t(\varepsilon)
        = \langle \bell \shuprod \bell',\, \sigX \otimes \exp^\otimes(\varepsilon\word{i}) \otimes \sigX[t, T]\rangle
        = \langle \bell,\, \sigX \otimes \exp^\otimes(\varepsilon\word{i}) \otimes \sigX[t, T]\rangle
          \langle \bell',\, \sigX \otimes \exp^\otimes(\varepsilon\word{i}) \otimes \sigX[t, T]\rangle.
    \]
    Differentiating both sides twice with respect to $\varepsilon$ at $\varepsilon=0$ gives 
    \begin{align*}
        \dfrac{d^2}{d\varepsilon^2}\bigg|_{\varepsilon = 0} h_t(\varepsilon) 
        &= \langle \bell \shuprod \bell',\, \sigX \otimes \word{ii}\otimes\sigX[t, T]\rangle \\
        &= \langle \bell,\, \sigX \otimes \word{ii}\otimes\sigX[t, T]\rangle\langle \bell',\, \sigX[T]\rangle 
        + \langle \bell,\, \sigX[T]\rangle \langle \bell',\, \sigX \otimes \word{ii}\otimes\sigX[t, T]\rangle \\
        &\quad + 2\langle \bell,\, \sigX \otimes \word{i}\otimes\sigX[t, T]\rangle\langle \bell',\, \sigX \otimes \word{i}\otimes\sigX[t, T]\rangle.
    \end{align*}
    Integrating this identity with respect to $\circ dX_t^k$ over $[0,T]$ and applying Theorem~\ref{thm:iterated_psi}, Proposition~\ref{prop:psi_adjoint}, and \eqref{eq:diamond_prod_prop} yields 
    $$
    \bracketsigX[T]{{\bf \Psi}^{\word{ii}}_{\word{k}}(\bell\shuprod\bell')} = \bracketsigX[T]{{\bf \Psi}^{\word{ii}}_{\word{k}}(\bell)}\bracketsigX[T]{\bell'} + \bracketsigX[T]{\bell}\bracketsigX[T]{{\bf \Psi}^{\word{ii}}_{\word{k}}(\bell')} +
    2\bracketsigX[T]{\bell \diamond^{\word{i}, \word{i}}_{\word{k}} \bell'},
    $$
    so that
    $$
    \bracketsigX[T]{\bell \diamond^{\word{i}, \word{i}}_{\word{k}} \bell'}
    = \dfrac{1}{2}\bracketsigX[T]{{\bf \Psi}^{\word{ii}}_{\word{k}}(\bell\shuprod\bell') - {\bf \Psi}^{\word{ii}}_{\word{k}}(\bell)\shuprod\bell' - \bell\shuprod{\bf \Psi}^{\word{ii}}_{\word{k}}(\bell')}
    $$
Finally, noting that this equality remains true for all possible \(X\), we conclude that
\eqref{eq:diamond_simplified} holds.
\end{proof}

\subsection{Integration by parts formula}

The linearization property \eqref{eq:L2_linearization}, proved in the previous subsection, suggests that taking $h = \D H$ for some $H \in \mathcal{P}(\sigW[T])$ is a natural choice for the function $h$ in Proposition~\ref{ibp_proposition}. However, one can obtain a much larger class of admissible functions $h$ by taking 
\begin{equation}\label{eq:h_class}
    h = \left(\D^i \bracketsigW[T]{\bell^h_i}\right)_{i = 1, \ldots, d},
    \qquad \bell^h_i \in \TA[n + 1], \quad i = 1, \ldots, d,
\end{equation}
so that we associate with each function $h$ a collection of coefficients 
\[
\vec{\bell}^h = (\bell_1^h, \ldots, \bell_d^h) \in \TA[d+1]^d.
\]
A natural extension of the ``diamond'' product $\diamond$ to vectors of coefficients is
\[
\vec{\bell} \diamond \vec{\bell'} := \sum_{i=1}^d \bell_i \diamond^{\word{i}, \word{i}}_{\word{0}}\bell'_i,
\qquad \vec{\bell}, \vec{\bell'} \in \TA[d+1]^d.
\]
Hence, for $G = \bracketsigW[T]{\bell^G}$ and $h$ of the form \eqref{eq:h_class}, we obtain
\begin{equation}\label{eq:DG_h_linearization}
    \bracket{\D G}{h}_{L^2}
    = \bracketsigW[T]{\bell^G \diamond \vec{\bell}^h},
\end{equation}
where we identify $\bell^G$ with the vector $\vec{\bell}^G = (\bell^G, \ldots, \bell^G)$.

Before stating the main theorem of this section, we introduce the class $\mathcal{R}(\sigW[T])$ of random variables that can be written as a ratio of two linear functionals of the Brownian signature.

\begin{definition}
    We say that a random variable $F$ belongs to $\mathcal{R}(\sigW[T])$ if there exist $\bell_1, \bell_2 \in \TA$ such that $\bracketsigW[T]{\bell_2} \neq 0$ a.s.\ and
    \[
        F = \dfrac{\bracketsigW[T]{\bell_1}}{\bracketsigW[T]{\bell_2}} \quad \text{a.s.}
    \]
\end{definition}

Note that $\mathcal{R}(\sigW[T])$ forms an algebra.
\begin{theorem}\label{thm:ibp_sig}
    Under the assumptions of Proposition~\ref{ibp_proposition}, suppose that 
    \begin{enumerate}
        \item $G \in \mathcal{P}(\sigW[T])$ with the coefficient $\bell^G \in \TA$.
        \item $F \in \mathcal{R}(\sigW[T])$, i.e., $F = \dfrac{\bracketsigW[T]{\bell^F_1}}{\bracketsigW[T]{\bell^F_2}}$ for some $\bell^F_1, \bell^F_2 \in \TA$.
        \item $h$ is a process of the form \eqref{eq:h_class}.
        \item The coefficients $\bell^G, \bell^{F_1}, \bell^{F_2}, \vec\bell^h$ are such that $\dfrac{F}{\bracket{\D G}{h}_{L^2}} = \dfrac{\bracketsigW[T]{\bell^F_1}}{\bracketsigW[T]{\bell^F_2}\bracketsigW[T]{\bell^G \diamond \vec\bell^{h}}} \in \mathbb{D}^{1,2}$.
    \end{enumerate} 
    Then, 
    \begin{equation}\label{eq:universal_delta_linearized}
        \E\left[f'(G)F\right] = 
        \E\left[
        f(G)\pi_T\right],
    \end{equation}
    where $\pi_T \in \mathcal{R}(\sigW[T])$ is given by
    \begin{equation}\label{eq:universal_weight_linearized}
    \begin{aligned}
        \pi_T &= 
        \dfrac{\bracketsigW[T]{\bell^F_1}\bracketsigW[T]{\sum_{i=1}^d({\bf \Lambda}_{\word{i}} - {{\bf\Psi}}^{\word{ii}}_{\word{0}})(\bell^{h}_i)}}{\bracketsigW[T]{\bell^F_2}\bracketsigW[T]{\bell^G \diamond \vec\bell^{h}}}
        - \dfrac{\bracketsigW[T]{\bell^F_1 \diamond \vec\bell^{h}}}{\bracketsigW[T]{\bell^F_2}\bracketsigW[T]{\bell^G \diamond \vec\bell^{h}}} \\
        &\quad+ \dfrac{\bracketsigW[T]{\bell^F_1}\bracketsigW[T]{\bell^F_2 \diamond\vec\bell^{h}}}{\bracketsigW[T]{\bell^F_2}^2\bracketsigW[T]{\bell^G \diamond \vec\bell^{h}}} 
        + \dfrac{\bracketsigW[T]{\bell^F_1}\bracketsigW[T]{\bell^G \diamond\vec\bell^{h}\diamond \vec\bell^{h}}}{\bracketsigW[T]{\bell^F_2}\bracketsigW[T]{\bell^G \diamond \vec\bell^{h}}^2}.
    \end{aligned}       
    \end{equation}
\end{theorem}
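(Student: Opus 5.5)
The plan is to start from formula \eqref{eq:universal_delta} of Proposition~\ref{ibp_proposition}, whose hypotheses hold by assumption, and rewrite each of the three terms of the weight as a rational function of $\sigW[T]$. We use three ingredients: the linearization \eqref{eq:DG_h_linearization}, the Skorokhod/generator identity $\delta^i\D^i = -{\bf L}^i$, and Corollary~\ref{cor:coordintate_OU}. We also need $h\in\mathrm{Dom}(\delta)$; this holds because each $\bracketsigW[T]{\bell^h_i}\in\mathbb{D}^{\infty,2}$ by Corollary~\ref{cor:brown_sig_def}, and $\mathbb{D}^{2,2}=\mathrm{Dom}({\bf L})$.

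\textbf{Step 1: the Skorokhod term.} Since $h^i=\D^i\bracketsigW[T]{\bell^h_i}$, we have
\[
\delta(h)=\sum_{i=1}^d\delta^i\D^i\bracketsigW[T]{\bell^h_i}=-\sum_{i=1}^d{\bf L}^i\bracketsigW[T]{\bell^h_i}.
\]
Corollary~\ref{cor:coordintate_OU} (adjoint form) then gives
\[
\delta(h)=\bracketsigW[T]{\textstyle\sum_i({\bf\Lambda}_{\word{i}}-{\bf\Psi}^{\word{ii}}_{\word{0}})(\bell^h_i)}.
\]
Multiplying by $F/\bracket{\D G}{h}_{L^2}$ and using \eqref{eq:DG_h_linearization} for the denominator yields the first term of \eqref{eq:universal_weight_linearized}.

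\textbf{Step 2: the term $\bracket{\D F}{h}_{L^2}$.} By the chain rule \eqref{eq:chain_rule}, applied to the quotient (after a localization or the standard extension of the chain rule to $x\mapsto x_1/x_2$ on $\{x_2\ne0\}$),
\[
\D F=\frac{\D\bracketsigW[T]{\bell^F_1}}{\bracketsigW[T]{\bell^F_2}}-\frac{\bracketsigW[T]{\bell^F_1}\,\D\bracketsigW[T]{\bell^F_2}}{\bracketsigW[T]{\bell^F_2}^2}.
\]
Pairing with $h$ componentwise, Corollary~\ref{cor:scal_prod_lin} in its vector form (Proposition~\ref{prop:diamond} with $\word{u_1}=\word{u_2}=\word{i}$, $\word{k}=\word{0}$, summed over $i$) gives
\[
\bracket{\D\bracketsigW[T]{\bell}}{h}_{L^2}=\bracketsigW[T]{\bell\diamond\vec\bell^h}.
\]
Dividing by $\bracket{\D G}{h}_{L^2}$ produces the second and third terms with the signs shown.

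\textbf{Step 3: the correction term.} By \eqref{eq:DG_h_linearization}, $\bracket{\D G}{h}_{L^2}=\bracketsigW[T]{\bell^G\diamond\vec\bell^h}$ is itself in $\mathcal{P}(\sigW[T])$. Applying the same linearization again gives
\[
\bracket{\D\bracket{\D G}{h}_{L^2}}{h}_{L^2}=\bracketsigW[T]{(\bell^G\diamond\vec\bell^h)\diamond\vec\bell^h},
\]
where $\bell^G\diamond\vec\bell^h$ is identified with a constant vector as in the text. This yields the last term, and since $\mathcal{R}(\sigW[T])$ is an algebra, $\pi_T\in\mathcal{R}(\sigW[T])$.

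The main obstacle is Step 1. The identity ${\bf L}^i=-\delta^i\D^i$ is only asserted in the text by conditioning on the other coordinates. To use it rigorously I would verify it directly: the duality \eqref{def:integ_skorohod_dual} applied to $\D^i\bracketsigW[T]{\bell^h_i}$ can be checked against the computation \eqref{eq:rmk_skorokhod}, which gives $\delta^i(\D^i\sigW[T])=({\bf\Lambda}_{\word{i}}-{\bf\Psi}^{\word{0}}_{\word{ii}})\sigW[T]$ via Lemma~\ref{lem:mal_trace} and the Stratonovich--Skorokhod conversion. Passing to adjoints by Proposition~\ref{prop:psi_adjoint} then gives the claimed coefficient. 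A secondary technical point is justifying the quotient chain rule. For this I would rely on hypothesis 4 together with a cutoff approximation $\varphi_\epsilon(x)=x/(x^2+\epsilon)$-type argument and closability of $\D$.
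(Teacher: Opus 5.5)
Your proposal is correct and follows essentially the same route as the paper: substitute into \eqref{eq:universal_delta} the linearization \eqref{eq:DG_h_linearization} for $\bracket{\D G}{h}_{L^2}$ and for its derivative, the quotient chain rule for $\bracket{\D F}{h}_{L^2}$, and $\delta^i\D^i=-{\bf L}^i$ combined with Corollary~\ref{cor:coordintate_OU} for $\delta(h)$. Your extra care about $h\in\mathrm{Dom}(\delta)$, the quotient chain rule, and checking $\delta^i\D^i$ directly through \eqref{eq:rmk_skorokhod} goes slightly beyond what the paper writes out, but it does not change the argument.
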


\begin{proof}
    The proof relies on formula \eqref{eq:universal_delta}, obtained in Proposition~\ref{ibp_proposition}. Our goal is to show that each term in the Malliavin weight given by \eqref{eq:universal_delta} can be written as a fraction of two random variables in $\mathcal{P}(\sigW[T])$.
    
    First, all the terms in \eqref{eq:universal_delta} containing $\bracket{\D G}{h}_{L^2}$ can be computed via \eqref{eq:DG_h_linearization}. 

    To compute $\bracket{\D F}{h}$, we apply the chain rule \eqref{eq:chain_rule}: 
    \[
    \D F 
    = \D \left(\dfrac{\bracketsigW[T]{\bell^F_1}}{\bracketsigW[T]{\bell^F_2}}\right) 
    = \dfrac{\D \bracket{\bell^F_1}{\sigW[T]}}{\bracketsigW[T]{\bell^F_2}}
      - \dfrac{\bracketsigW[T]{\bell^F_1}\D \bracket{\bell^F_2}{\sigW[T]}}{\bracketsigW[T]{\bell^F_2}^2},
    \]
    so that, {using (\ref{eq:DG_h_linearization}),}
    \begin{equation}\label{eq:proof_IBP_2}
        \bracket{\D F}{h}_{L^2} 
        = \dfrac{\bracket{\bell^F_1 \diamond \vec\bell^{h}}{\sigW[T]}}{\bracketsigW[T]{\bell^F_2}}
        - \dfrac{\bracketsigW[T]{\bell^F_1}\bracket{\bell^F_2\diamond \vec\bell^{h}}{\sigW[T]}}{\bracketsigW[T]{\bell^F_2}^2}.
    \end{equation}
    As for the term $\delta(h)$, we observe that
    \begin{equation}\label{eq:proof_IBP_3}
        \delta^i(h^i) 
        = \delta^i\!\left(\D^i\bracketsigW[T]{\bell^{h}_i}\right)
        = -{\bf L}^i\bracketsigW[T]{\bell^{h}_i}
        = \bracketsigW[T]{({\bf \Lambda}_{\word{i}} - {{\bf\Psi}}^{\word{ii}}_{\word{0}})(\bell^{h^i})}, 
    \end{equation}
    as follows from Corollary~\ref{cor:coordintate_OU}.

    Combining \eqref{eq:DG_h_linearization}, \eqref{eq:proof_IBP_2}, and \eqref{eq:proof_IBP_3} and substituting the results into \eqref{eq:universal_delta} yields \eqref{eq:universal_weight_linearized}.

    It remains to notice that $\bracketsigW[T]{\bell^F_2} \neq 0$ a.s. since $F \in \mathcal{R}(\sigW[T])$, and that 
    \[
        \bracketsigW[T]{\bell^G \diamond \vec\bell^{h}} 
        = \langle \D G, h\rangle_{L^2} \neq 0 
        \quad (a.s.),
    \]
    by the assumption of Proposition~\ref{ibp_proposition}, so that $\pi_T$ belongs to $\mathcal{R}(\sigW[T])$.
\end{proof}

\begin{sqremark}
    Theorem~\ref{thm:ibp_sig} states that the integration by parts formula \eqref{eq:universal_delta_linearized} preserves the class $\mathcal{R}(\sigW[T])$. This implies that if $f \in C^k$ for some $k \geq 2$, then one can iterate the formula to obtain
    \begin{equation}\label{eq:ibp_recursive}
        \E\left[f^{(k)}(G)F\right] 
        = \E\left[f(G)\pi_T^k\right],
    \end{equation}
    where $\pi_T^k \in \mathcal{R}(\sigW[T])$ is given recursively by \eqref{eq:universal_weight_linearized}, replacing $\bell^F_m$ by $\bell^{\pi_T^{k - 1}}_m$ for $m \in \{1, 2\}$ and $k \geq 2$.
\end{sqremark}

\begin{sqremark}
    The formula \eqref{eq:universal_weight_linearized} can be written explicitly in the compact form
    \begin{equation}
        \pi_T = \dfrac{\bracketsigW[T]{\bell^{\pi_T}_1}}{\bracketsigW[T]{\bell^{\pi_T}_2}}, \quad \bracketsigW[T]{\bell^{\pi_T}_2} \neq 0,
    \end{equation}
    with
    \begin{align}
        \bell^{\pi_T}_1 &= \left(\sum_{i = 1}^d({\bf \Lambda}_{\word{i}} - {{\bf\Psi}}^{\word{ii}}_{\word{0}})(\bell^{h^i})\right)\shuprod\bell_1^F\shuprod\bell_2^F\shuprod\left(\bell^G \diamond\vec\bell^{h}\right) 
         - \left(\bell^F_1 \diamond \vec\bell^{h}\right)\shuprod\bell_2^F\shuprod\left(\bell^G \diamond\vec\bell^{h}\right) \\
        & + \bell^F_1\shuprod\left(\bell^F_1 \diamond\vec\bell^{h}\right)\shuprod\left(\bell^G\diamond\vec\bell^{h}\right)
         + \bell^F_1\shuprod\bell^F_2\shuprod\left(\bell^G \diamond\vec\bell^{h}\diamond\vec\bell^{h}\right),
        \\
        \bell^{\pi_T}_2 &= (\bell^F_2)\shupow{2}\shuprod\left(\bell^G \diamond\vec\bell^{h}\right)\shupow{2}.
    \end{align}
    Although this makes the implementation of the recursive formula \eqref{eq:ibp_recursive} straightforward, it has an important practical disadvantage: computing $\bracketsigW[T]{\bell^{\pi_T}_1}$ and $\bracketsigW[T]{\bell^{\pi_T}_2}$ requires evaluating $\sigW^{\leq N}$ for a much higher order $N$ than would be necessary when using the “decomposed’’ formula \eqref{eq:universal_weight_linearized}.
\end{sqremark}

\begin{sqexample}\label{ex:weights_example}
    The following examples show that two widely used choices of $h$ are covered by Theorem~\ref{thm:ibp_sig}.
\begin{enumerate}
    \item Consider first the simplest choice $h^i \equiv \mathds{1}_{[0, T]}$, $i \in I$, for some set of indices $I \subset \{1, \ldots, d\}$, as considered in \citep{al2023computation}. Note that $\mathds{1}_{[0, T]} \equiv \D^i\bracketsigW[T]{\word{i}}$, so that $\bell^{h}_i = \indic{i \in I}\word{i}$. Since, by Corollary~\ref{cor:diamond_letter}, $\bell \diamond^{\word{i}, \word{i}}_{\word{0}} \word{i} = {{\bf\Psi}}^{\word{i}}_{\word{0}}(\bell)$ for all $\bell\in\TA$, and since $({\bf \Lambda}_{\word{i}} - {{\bf\Psi}}^{\word{ii}}_{\word{0}})(\word{i}) = \word{i}$ by direct computation, the expression \eqref{eq:universal_weight_linearized} simplifies to
    \begin{equation}
    \begin{aligned}
        \pi_T &= 
        \dfrac{\bracketsigW[T]{\sum_{i \in I}\word{i}}}{\bracketsigW[T]{\sum_{i\in I}{\bf \Psi}^{\word{i}}_{\word{0}}(\bell^G )}}F 
        \;-\; \dfrac{\bracketsigW[T]{\sum_{i \in I}{\bf \Psi}^{\word{i}}_{\word{0}}(\bell^F_1)}}{\bracketsigW[T]{\bell^F_2}\,\bracketsigW[T]{\sum_{i\in I}{\bf \Psi}^{\word{i}}_{\word{0}}(\bell^G )}} \\
        &\quad + \dfrac{\bracketsigW[T]{\bell^F_1}\,\bracketsigW[T]{\sum_{i \in I}{\bf \Psi}^{\word{i}}_{\word{0}}(\bell^F_2)}}{\bracketsigW[T]{\bell^F_2}^2\,\bracketsigW[T]{\sum_{i\in I}{\bf \Psi}^{\word{i}}_{\word{0}}(\bell^G )}} 
        + \dfrac{\bracketsigW[T]{\sum_{i,j \in I}{\bf \Psi}^{\word{i}}_{\word{0}}({\bf \Psi}^{\word{j}}_{\word{0}}(\bell^G))}}{\bracketsigW[T]{\sum_{i\in I}{\bf \Psi}^{\word{i}}_{\word{0}}(\bell^G )}^2}F.
    \end{aligned}   
    \end{equation}
    Despite the relative simplicity of this formula, it is necessary to justify that $\bracketsigW[T]{\sum_{i\in I}{\bf \Psi}^{\word{i}}_{\word{0}}(\bell^G)} \neq 0$ a.s. If this cannot be guaranteed, one might prefer the second option.

    \item The second natural choice is $h^i = \D^iG$ (i.e.\ $\bell^{h}_i = \bell^G$) for $i \in I$. It is often used to establish the regularity of densities (see, for instance, \citet[Proposition 2.1.1]{nualart2006malliavin}). This choice ensures that 
    \begin{equation}\label{eq:DG_h_cond}
        \langle \D G, h\rangle_{L^2} = \sum_{i \in I}\|\D^i G\|^2 \ge 0.
    \end{equation}
    The expression \eqref{eq:DG_h_cond} is strictly positive if a.s.\ there exist $i \in I$ and $t\in[0, T]$ such that $\D_t^i G \neq 0$, which is much easier to verify than the corresponding condition in the first case. Note that, when $I = \{1, \ldots, d\}$, the condition $\langle D G, h \rangle_{L^2} \neq 0$ represents the standard non-degeneracy condition for the Malliavin covariance matrix, which reduces to a scalar in this setting. 
\end{enumerate}
\end{sqexample}

\begin{sqremark}
    The assumptions on the process $h$ in Theorem~\ref{thm:ibp_sig} allow us not only to encompass the most common choices of $h$, as shown in the example above, but also to express the Malliavin weight using the unified formula \eqref{eq:universal_weight_linearized}. However, in general, other classes of processes $h$ may be considered, and the corresponding integration by parts formulae can be derived using similar arguments.
\end{sqremark}

\section{Greeks in the signature volatility model}\label{sect:application}
We illustrate our results by computing the Greeks within the Signature Volatility model. Throughout this section, we assume that $d = 2$. The asset price under the risk-neutral measure in the Signature Volatility model is given by
\begin{align}\label{eq:model_sv}
 \dfrac{dS_t}{S_t} &= \sigma_t dB_t, \quad t \in [0, T],\\
 \sigma_t &= \bracketsigW{\bsigma}.
\end{align}
where $\bsigma\in\TA[3]$, $\rho \in [-1, 1]$, $\sigW$ denotes the signature of $\widehat{W}_t = (t, W_t^1, W_t^2)$, and $B_t = \rho W_t^1 + \bar\rho W_t^2$ with $\bar\rho = \sqrt{1 - \rho^2}$. We assume the interest rate is $r = 0$ and that the volatility process is independent of $W^2$, meaning no letter $\word{2}$ appears in $\bsigma$. To ensure the martingale property of the price process $S$ established in \citep[Theorem 3.1]{jaber2025mart}, we take odd truncation orders $N$ (that is, $\bsigma^{\word{v}} = 0$ for all $|\word{v}| > N$) and impose the condition $\rho\,\bsigma^{\word{1}\conpow{N}} < 0$, unless the volatility process is linear in $W$.

This model was shown to be universal in the sense that a large class of Markovian and non-Markovian stochastic volatility models can be exactly represented in the form \eqref{eq:model_sv} with an infinite collection of coefficients $\bsigma$, and can be well approximated by such models with finitely many coefficients $\bsigma$; see \citep{abijaber2024signature}. This highlights the universality of the approach proposed in this paper, which can be applied to any model that can be written in the form \eqref{eq:model_sv} without any modification of the pricing or Greeks formulae.

We are interested in general path-dependent options with maturity $T > 0$ that depend on the signature $\sigXhat[T]$ of the time-augmented log-price $\widehat X: t \mapsto (t, \log S_t)$ for $t\in[0, T]$. We show in Proposition~\ref{prop:log_price_sig_repr} below that $X_t$ can be represented as a linear functional $\bracketsigW{\bell^X}$ of the Brownian signature. Hence, each element of $\sigXhat[T]$ can be expressed as a linear functional of $\sigW[T]$ as well; see \citep[Theorem~3.16]{cuchiero2022theocalib} for an explicit formula. Therefore, we can assume the payoff is written in the form $f\left(G\right), \ G = \bracketsigW[T]{\bell^G}$, where $\bell^G \in \TA[3]$. The option price is given by $\E\left[f\left(G\right)\right]$. Our goal is to calculate the sensitivities
\begin{equation}\label{eq:greek_to_compute}
\dfrac{\partial}{\partial\theta}\E\left[f\left(G\right)\right],
\end{equation}
with respect to the model parameters $\theta$, where $\theta \in \{S_0, \rho, \bsigma\}$.

Assuming for a moment that $f\in C^1_b$ and that we can exchange the expectation and the differentiation, the Greek \eqref{eq:greek_to_compute} is given by
\begin{equation}\label{eq:greek_sigvol}
\E\left[f'\left(\bracketsigW[T]{\bell^G}\right) \bracketsigW[T]{\dfrac{\partial}{\partial\theta}\bell^G}\right] = \E\left[f\left(\bracketsigW[T]{\bell^G}\right)\pi_T\right].
\end{equation}
Note that $\bracketsigW[T]{\dfrac{\partial}{\partial\theta}\bell^G} \in \mathcal{P}(\sigW[T]) \subset \mathcal{R}(\sigW[T])$, so that Theorem~\ref{thm:ibp_sig} applies and the weight $\pi_T$ is given by \eqref{eq:universal_weight_linearized}. This weight is independent of the function~$f$, but it does depend on the option payoff through the coefficient~$\bell^G$.

\begin{remark}\label{rmk:approx_argument}
    The integration-by-parts result given by Theorem~\ref{thm:ibp_sig} requires the function~$f$ to be differentiable with a bounded derivative. However, functions of practical interest are typically non-differentiable, such as call option payoffs, or even discontinuous, such as digital option payoffs. In these cases, equation~\eqref{eq:greek_sigvol} no longer makes sense.
    Nevertheless, for a square-integrable function~$f$, in the sense that
    \[
    \E\left[f\left(\bracketsigW[T]{\bell^G}\right)^2\right] < \infty,
    \]
    assuming that the maps $\theta \mapsto \E\left[f\left(\bracketsigW[T]{\bell^G}\right)^2\right]$ and $\theta \mapsto \E[\pi_T^2]$ are continuous, one can prove that
    \begin{equation}\label{eq:ibp_non_smooth}
    \frac{\partial}{\partial\theta}\E\left[f\left(\bracketsigW[T]{\bell^G}\right)\right]
    =
    \E\left[f\left(\bracketsigW[T]{\bell^G}\right)\pi_T\right].
    \end{equation}
    This is achieved by approximating~$f$ in $L^2$ by a sequence $(f_n)_{n\ge 0}$ of compactly supported, infinitely differentiable functions for which \eqref{eq:ibp_non_smooth} holds, and then passing to the limit as $n \to \infty$. A rigorous proof of this approximation argument can be obtained by a straightforward adaptation of the proof of \citep[Proposition~3.2(ii)]{Fournie1999}. Note that, for the examples considered below, the continuity assumptions are trivially satisfied.
\end{remark}

Before considering concrete examples, we rewrite $\log S_T$ as a linear functional of the Brownian signature.
\begin{proposition}\label{prop:log_price_sig_repr}
    The log-price $X_t = \log S_t$ for $t \in [0, T]$ is given by
\begin{equation}\label{eq:log-price_repr}
    X_t = \bracketsigW{\bell^X}, \quad\bell^X := \log (S_0)\emptyword{-\dfrac{1}{2}\bsigma\shupow{2}\otimes\word{0} + \rho\left(\bsigma\otimes\word{1} - \dfrac{1}{2}\bsigma\proj{1}\otimes\word{0}\right) + \bar\rho\bsigma\otimes\word{2} } \in \TA,
\end{equation}
where $\bsigma\proj{i}= \sum_{n \geq 0}\sum_{|\word{v}|=v}\bsigma^{\word{vi}}\word{v}$.
\end{proposition}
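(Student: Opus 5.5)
The plan is to write $X_t=\log S_t$ via Itô's formula and then convert each term into a linear functional of $\sigW$, using Lemma~\ref{lem:inserted_int} in its simplest form, $\bracketsigW{\bell\otimes\word{j}}=\int_0^t\bracketsigW[s]{\bell}\circ d\widehat W^j_s$ (the case $\bell_2=\emptyword$).

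First, Itô's formula gives
\[
X_t=\log S_0+\int_0^t\sigma_s\,dB_s-\tfrac12\int_0^t\sigma_s^2\,ds .
\]
The drift term is immediate. By the shuffle property (Proposition~\ref{prop:shuffle}), $\sigma_s^2=\bracketsigW[s]{\bsigma\shupow{2}}$. Integrating against $ds=d\widehat W^0_s$, for which the Itô and Stratonovich integrals agree, and applying Lemma~\ref{lem:inserted_int} yields
\[
-\tfrac12\int_0^t\sigma_s^2\,ds=\bracketsigW{-\tfrac12\bsigma\shupow{2}\otimes\word{0}} .
\]

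Next, split $dB_s=\rho\,dW^1_s+\bar\rho\,dW^2_s$ and convert each Itô integral to Stratonovich form, $\int_0^t\sigma_s\,dW^i_s=\int_0^t\sigma_s\circ dW^i_s-\frac12\langle\sigma,W^i\rangle_t$, so that Lemma~\ref{lem:inserted_int} produces $\bracketsigW{\bsigma\otimes\word{i}}$.

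It remains to compute the covariation. From the signature dynamics \eqref{eq:sig_dynamics}, coordinatewise,
\[
\sigma_t=\bsigma^{\emptyword}+\sum_{j=0}^{2}\int_0^t\bracketsigW[s]{\bsigma\proj{j}}\circ d\widehat W^j_s .
\]
Indeed, each word $\word{v}\word{j}$ satisfies $\sigW^{\word{vj}}=\int_0^t\sigW[s]^{\word{v}}\circ d\widehat W^j_s$. Hence the martingale part of $\sigma$ is $\sum_{j=1,2}\int\bracketsigW[s]{\bsigma\proj{j}}\,dW^j_s$. Since the Itô–Stratonovich correction only adds finite variation terms, this gives
\[
d\langle\sigma,W^i\rangle_s=\bracketsigW[s]{\bsigma\proj{i}}\,ds .
\]
Because $\bsigma$ contains no letter $\word{2}$, we have $\bsigma\proj{2}=0$, so the $W^2$ integral needs no correction. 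For $i=1$ we get $-\frac12\int_0^t\bracketsigW[s]{\bsigma\proj{1}}\,ds=\bracketsigW{-\frac12\bsigma\proj{1}\otimes\word{0}}$, again by Lemma~\ref{lem:inserted_int}.

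Finally, the constant $\log S_0$ equals $\bracketsigW{\log(S_0)\emptyword}$ since $\sigW^{\emptyword}=1$. Summing all the pieces gives \eqref{eq:log-price_repr}.

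The main point requiring care is the covariation step, specifically the correct identification of the letter‑stripping operator $\proj{i}$ and the Stratonovich/Itô bookkeeping. The rest is routine, and integrability is not an issue because $\bsigma\in\TA[3]$ is a finite tensor, so all signature components involved have moments of every order.
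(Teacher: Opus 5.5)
Your argument is correct and is essentially the paper's proof. Both write $\log S_t$ as the stochastic exponential, linearize $\sigma_t^2$ via the shuffle product, and convert each $\int_0^t\sigma_s\,dW^i_s$ to Stratonovich form using $d\bracketsigW{\bsigma}=\sum_{j}\bracketsigW{\bsigma\proj{j}}\circ d\widehat W^j_t$, with $\bsigma\proj{2}=0$ removing the $W^2$ correction. The only difference is that you derive $d\langle\sigma,W^i\rangle_t=\bracketsigW{\bsigma\proj{i}}\,dt$ explicitly, whereas the paper just states the resulting identity $\int_0^T\bracketsigW{\bell}\,dW^i_t=\bracketsigW[T]{\bell\otimes\word{i}-\frac12\bell\proj{i}\otimes\word{0}}$.
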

\begin{proof}
We have
\begin{align}
S_T &= S_0 \exp\left(-\dfrac12\int_0^T\sigma_u^2\,du + \int_0^T\sigma_u\,\rho dW_u^1 + \int_0^T\sigma_u\,\bar\rho dW_u^2 \right) \\
&=S_0 \exp\left(\bracketsigW[T]{-\dfrac{1}{2}\bsigma\shupow{2}\otimes\word{0} + \rho\left(\bsigma\otimes\word{1} - \dfrac{1}{2}\bsigma\proj{1}\otimes\word{0}\right) + \bar\rho\bsigma\otimes\word{2} } \right).
\end{align}
where we used the identity 
$
\int_0^T \bracketsigW{\bell}\, dW_t^i 
= \bracketsigW[T]{\,\bell\otimes\word{i} - \frac{1}{2}\,\bell\proj{i}\otimes\word{0}\,},
$
which follows from the relationship between the Itô and Stratonovich integrals, as well as the dynamics of $\bracket{\bell}{\sigW}$:
\begin{align}
    d\bracket{\bell}{\sigW} &= \bracket{\bell}{\sigW\otimes\word{0}}\,dt + \bracket{\bell}{\sigW\otimes\word{1}}\circ dW_t^1  + \bracket{\bell}{\sigW\otimes\word{2}}\circ dW_t^2, \\
    &= \bracket{\bell\proj{0}}{\sigW}\,dt + \bracket{\bell\proj{1}}{\sigW}\circ dW_t^1  + \bracket{\bell\proj{2}}{\sigW}\circ dW_t^2.
\end{align}
It remains to notice that $\bsigma\proj{2} = 0$ by assumption on $\bsigma$. Taking the logarithm then concludes the proof.
\end{proof}

\subsection{European options}\label{sect:european}
In this subsection, we consider a general European option with the payoff $f(\log S_T) = f\left(\bracketsigW[T]{\bell^X}\right)$ at maturity $T$, so that $\bell^G = \bell^X$.

It is straightforward to compute $F:=\dfrac{\partial}{\partial\theta}\bracketsigW[T]{\bell^X}$ for $\theta \in \{S_0, \rho, \bsigma\}$ using \eqref{eq:log-price_repr}. Indeed,
$$
\dfrac{\partial}{\partial S_0}\bracketsigW[T]{\bell^X} = \frac{1}{S_0}, \quad \dfrac{\partial}{\partial \rho}\bracketsigW[T]{\bell^X} = \bracketsigW[T]{\bsigma\word{1} - \dfrac{1}{2}\bsigma\proj{1}\word{0}-\frac{\rho}{\bar{\rho}  }\bsigma \word{2}},
$$
and the derivatives $\dfrac{\partial}{\partial \sigma^{\word{v}}}\bracketsigW[T]{\bell^X},\, \word{v} \in V,$ are given by
$$
\dfrac{\partial}{\partial \sigma^{\word{v}}}\bracketsigW[T]{\bell^X} = \bracketsigW[T]{-(\word{v}\shuprod\bsigma)\word{0} + \rho\left(\word{v1} - \dfrac{1}{2}\word{v}\proj{1}\word{0}\right) + \bar\rho\left(\word{v2} - \dfrac{1}{2}\word{v}\proj{2}\word{0}\right)}.
$$
We will compare the numerical computation of the option's delta (i.e., $\theta = S_0$, which implies $\bell^F_1 = \emptyword, \ \bell^F_2 = S_0\emptyword$) for the following choices of the function $h$:
$$h_1 = (1, 0),\quad h_2 = (\D^1X_T, 0),\quad h_3 = (0, 1), \quad h_4 = (0, \D^2X_T).$$
In this case, we observe that since $\bell^F_1 = \emptyword, \ \bell^F_2 = S_0\emptyword$, we have
$$
\bell^F_1 \diamond^{\word{i}, \word{i}}_{\word{0}} \bell^{h^i} = \bell^F_2 \diamond^{\word{i}, \word{i}}_{\word{0}} \bell^{h^i} = 0, \quad {\bf \Psi}^{\word{i}}_{\word{0}}(\bell^F_1) = {\bf \Psi}^{\word{i}}_{\word{0}}(\bell^F_2) = 0, \quad i = 1, 2.
$$
Furthermore, since all the non-zero coefficients of $\bell^X$ correspond to the words with at most one letter $\word{2}$, we have
$$
\bell^X \diamond^{\word{2}, \word{2}}_{\word{0}} \bell^{X}\diamond^{\word{2}, \word{2}}_{\word{0}} \bell^{X} = 0, \quad {\bf \Psi}^{\word{2}}_{\word{0}}({\bf \Psi}^{\word{2}}_{\word{0}}(\bell^X)) = 0, \quad {\bf \Psi}^{\word{22}}_{\word{0}}(\bell^X) = 0.$$ 
Moreover, a straightforward computation shows that
$$
{\bf \Psi}^{\word{2}}_{\word{0}}(\bell^X ) = \bar\rho \bsigma \word{0}, \quad \bell^X \diamond^{\word{2}, \word{2}}_{\word{0}} \bell^X = \bar\rho^2\bsigma\shupow{2}\word{0}, \quad {\bf \Lambda}_{\word{2}}(\bell^X) = \bar\rho\bsigma\word{2}.
$$ This leads to a significant simplification of the Malliavin weights. Indeed, applying Theorem~\ref{thm:ibp_sig} and the formulae derived in Example~\ref{ex:weights_example} yields the Malliavin weights given in the second column of Table~\ref{tab:table_weights}.

Note that the formulas corresponding to $h^3$ and $h^4$ can be rewritten simply as 
\[
\dfrac{W_T^2}{\bar\rho S_0 \int_0^T \sigma_t\,dt}
\quad\text{and}\quad
\dfrac{\int_0^T \sigma_t\,d W_t^2}{\bar\rho S_0 \int_0^T \sigma_t^2\,dt},
\]
respectively. These weights can be obtained via standard integration by parts with respect to $W^2$, without invoking Theorem~\ref{thm:ibp_sig}. Hence, they do not depend on the choice of the volatility process as long as $\D^2 \sigma_t \equiv 0$. We include these weights as a sanity check, observing that our general result, Theorem~\ref{thm:ibp_sig}, indeed covers this particular case and leads to the same expressions. Moreover, we observe that the weights corresponding to $h_3$ and $h_4$ diverge when $\rho \in \{-1, 1\}$.

Another possible choice of $h$ is $h_t = \dfrac{1}{\D_t^2 X_T}$, as considered by \citet[Section~4.2.4]{alos2021malliavin}. While this choice simplifies the computation of the weight, it introduces the integral $\int_0^T \dfrac{1}{\sigma_t}\,dW_t^2$, which is not well defined if the volatility process $\sigma$ touches zero. Furthermore, this integral cannot be expressed as a linear functional of the signature and therefore requires discretization. This, in turn, results in a loss of numerical stability and tractability. For this reason, we do not include such estimators in our comparison.

\renewcommand{\arraystretch}{2.5}
\begin{table}[H]
\centering
\begin{tabular}{|c|c|c|c|}
\hline
$h$ & Malliavin weight $\pi_T$ & \makecell{Signature order $N$\\ ($\mathrm{deg}(\bsigma)=M$)} & \makecell{Numerical \\ stability} \\
\hline

$h_1 = (1, 0)$ & 
$\dfrac{\bracketsigW[T]{\word{1}}}{S_0\bracketsigW[T]{{\bf \Psi}^{\word{1}}_{\word{0}}(\bell^X)}} 
+ 
\dfrac{\bracketsigW[T]{{\bf \Psi}^{\word{1}}_{\word{0}}({\bf \Psi}^{\word{1}}_{\word{0}}(\bell^X))}}
{S_0\bracketsigW[T]{{\bf \Psi}^{\word{1}}_{\word{0}}(\bell^X)}^2}$ 
& 
$N = 2M + 1$ & 
{\color{Red}\ding{55}} \\
\hline

$h_2 = (\D^1X_T, 0)$ & 
$\dfrac{\bracketsigW[T]{({\bf \Lambda}_{\word{1}} - {{\bf\Psi}}^{\word{11}}_{\word{0}})(\bell^{X})}}
{S_0\bracketsigW[T]{\bell^X \diamond^{\word{1},\word{1}}_{\word{0}} \bell^{X}}}
+ 
\dfrac{\bracketsigW[T]{\bell^X \diamond^{\word{1},\word{1}}_{\word{0}} \bell^{X} 
\diamond^{\word{1},\word{1}}_{\word{0}} \bell^{X}}}
{S_0\bracketsigW[T]{\bell^X \diamond^{\word{1},\word{1}}_{\word{0}} \bell^{X}}^2}$
&
$N = 6M + 1$ & 
{\color{Green}\ding{51}} \\
\hline

$h_3 = (0, 1)$ & 
$\dfrac{\bracketsigW[T]{\word{2}}}{\bar\rho S_0\bracketsigW[T]{\bsigma\word{0}}}$
&
$N = M + 1$ & 
{\color{Red}\ding{55}} \\
\hline

$h_4 = (0, \D^2X_T)$ & 
$\dfrac{\bracketsigW[T]{\bsigma\word{2}}}{\bar\rho S_0\bracketsigW[T]{\bsigma\shupow{2}\word{0}}}$
&
$N = 2M + 1$ & 
{\color{Green}\ding{51}} ($|\rho| \neq 1$) \\
\hline

\end{tabular}
\caption{Comparison of Malliavin weights, signature order, and numerical stability for different $h$}
\label{tab:table_weights}
\end{table}

\begin{sqremark}
    Different choices of $h$ generally require computing different maximal levels of the signature in order to obtain the exact expressions. For instance, if one takes the coefficient $\bsigma$ of degree $M$ (that is, $\bsigma^{\word{v}} \neq 0$ only for $\word{v}$ such that $|\word{v}| \leq M$), then the degree of the coefficients appearing in $\pi_T$ is $6M + 1$ for $h_2$, only $2M + 1$ for $h_1$ and $h_4$, and $M + 1$ for $h_3$. However, there is no theoretical guarantee that the denominators of the weights corresponding to $h_1$ and $h_3$ do not vanish. In Appendix~\ref{sect:instabilities}, we illustrate that it is indeed possible to find model parameters such that $\pi_T$ blows up. We summarize the discussion above in Table~\ref{tab:table_weights}.
\end{sqremark}

We illustrate the convergence for standard vanilla and digital at-the-money (ATM) options 
$$
f_{\mathrm{vanilla}}(x) = (e^x - K)^+, \quad f_{\mathrm{digital}}(x) = \indic{e^x \geq K},
$$
with $K = S_0$. 

We provide in Figure~\ref{fig:path-dep-model} the convergence diagrams for \eqref{eq:greek_sigvol} under the signature volatility model with parameters 
\begin{equation}\label{eq:perfect_correl}
    \bsigma = 0.2\cdot\emptyword + 0.1\cdot(\word{1} + \word{10} + \word{110} + \word{111}), \quad \rho = -1.
\end{equation}
The signature truncation order was set to $N = 8$. Note that in this particular case of perfect correlation $\rho = -1$, the formulas corresponding to the functions $h_3$ and $h_4$ are not applicable, so we plot the results only for the first two functions $h_1$ and $h_2$. 

To reduce the variance of the Malliavin delta estimator, we apply the localization procedure proposed by \citet{Fournie1999} with $\delta = 10$,
which consists in localizing the singularity of the payoff. More precisely, the payoff function~$f$ is decomposed as
\[
f(x) = G_\delta(x) + F_\delta(x), \quad x \in \R,
\]
where $G_\delta \in C_b^1$ is a regular function, while $F_\delta$ has a singularity at $x = \log K$ but vanishes whenever $|e^x - K| > \delta$. One then applies the Malliavin method only to the function~$F_\delta$ in order to reduce the Monte Carlo variance induced by the weight~$\pi_T$, while using pathwise differentiation (i.e., differentiation under the expectation) for the function~$G_\delta$. For the explicit construction of the functions $G_\delta$ and $F_\delta$ in the case of a call option payoff, we refer to \citep[Section~4]{Fournie1999}.
 
We compare the Malliavin-based Monte Carlo approach using $n_{\mathrm{max}} = 10^5$ trajectories with the option's delta obtained from the characteristic function of the log-price (see Appendix~\ref{sect:char_func}), and with the standard finite-difference approach. The latter consists of approximating the option's delta by
$$
\frac{\partial}{\partial s}\mathbb{E}\left[f\left(\log S_T\right) \mid S_0=s\right] \approx \frac{\mathbb{E}\left[f\left(\log S_T\right) \mid S_0=(1 + \varepsilon)s\right] - \mathbb{E}\left[f\left(\log S_T\right) \mid S_0=(1 - \varepsilon)s\right]}{2\varepsilon s},
$$
where the expectations are estimated via the Monte Carlo method using common random numbers. Throughout the experiments, we fix $\varepsilon = 0.01$.

\begin{figure}[H]
    \centering
    \begin{subfigure}[b]{1\textwidth}
        \centering
        \includegraphics[width=\textwidth]{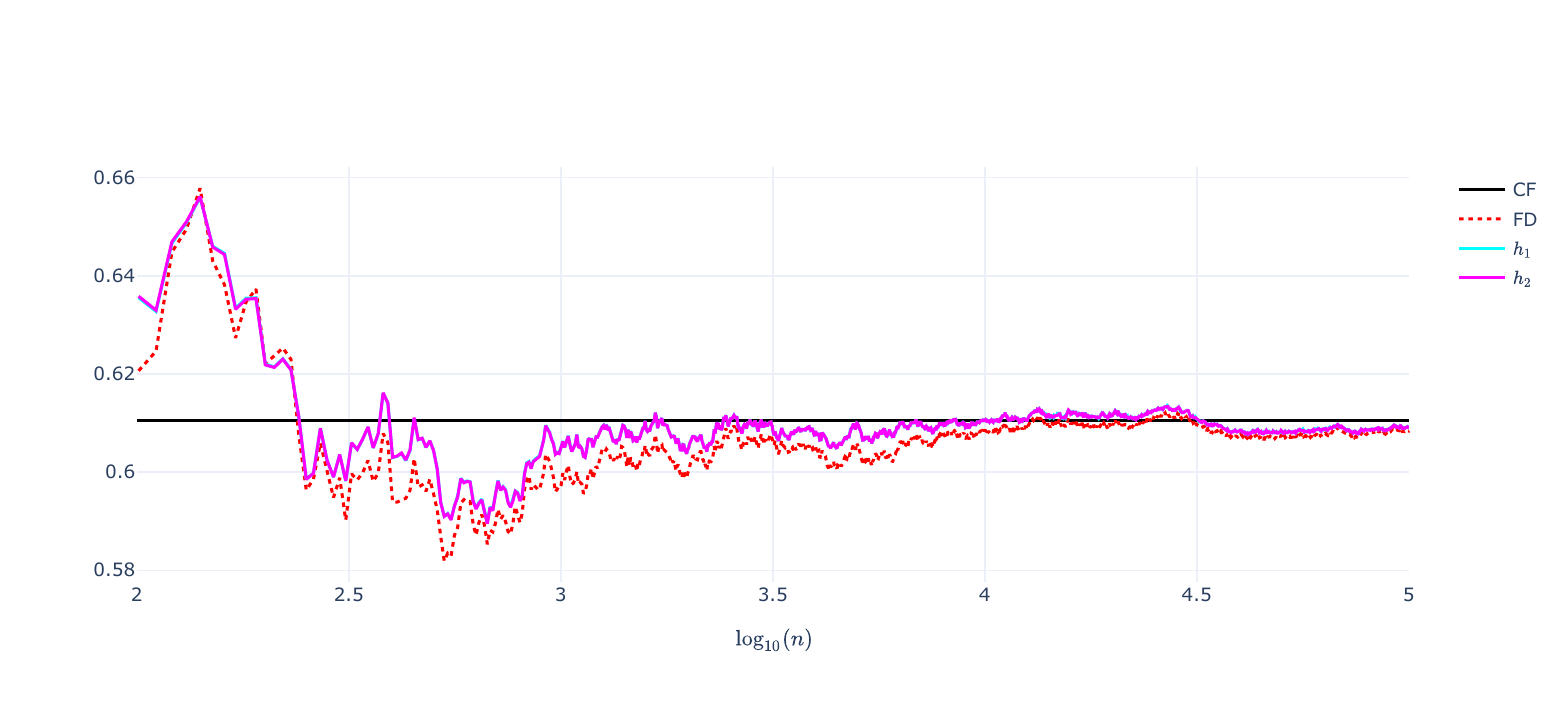}
        \caption{Delta of the vanilla ATM call option.}
    \end{subfigure}

    \vspace{1em}

    \begin{subfigure}[b]{1\textwidth}
        \centering
        \includegraphics[width=\textwidth]{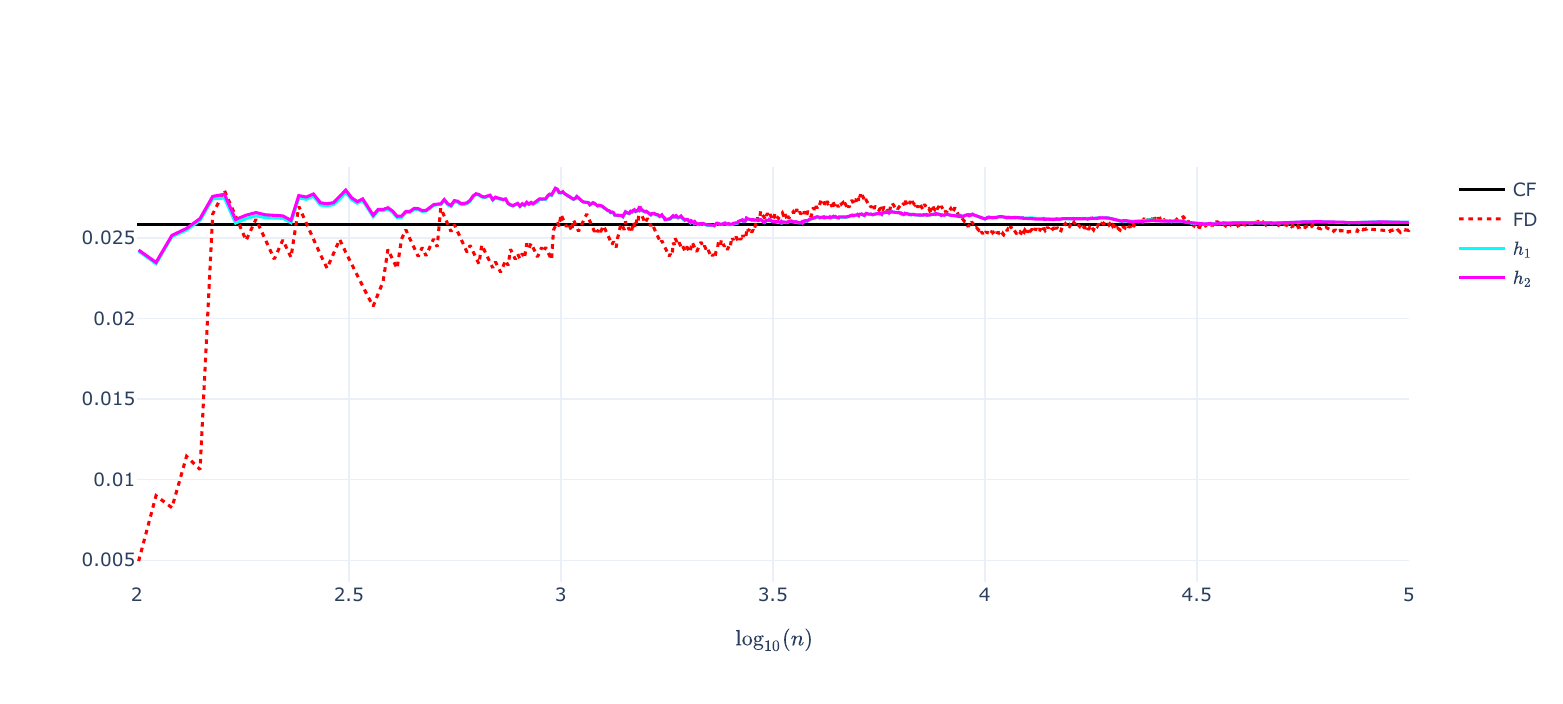}
        \caption{Delta of the digital ATM call option.}
    \end{subfigure}

    \caption{Convergence diagram for vanilla and digital ATM options in the perfect-correlation model \eqref{eq:perfect_correl}. The black horizontal line represents the delta computed using the characteristic function. The dotted red curve shows the results of the finite-difference method, while the two solid curves correspond to the Malliavin method with functions $h_1$ and $h_2$, respectively.}
    \label{fig:path-dep-model}
\end{figure}

In Figure~\ref{fig:conv-diagram-europ-stochvol}, we show the convergence diagram corresponding to the model
\begin{equation}\label{eq:ex_stoch_vol_eur}
    \bsigma = 0.25\cdot\emptyword + 0.04\cdot(\word{1} + \word{01} + \word{110} + \word{111}), \quad \rho = -0.9,
\end{equation}
with the signature truncated at order $N = 7$, as the number of simulations $n$ varies from $0$ to $n_{\mathrm{max}} = 10^5$. As expected, in both cases the Malliavin method exhibits faster convergence than the finite-difference method when the payoff function $f$ is less regular (discontinuous in our case).

\begin{figure}[H]
    \centering
    \begin{subfigure}[b]{1\textwidth}
        \centering
        \includegraphics[width=\textwidth]{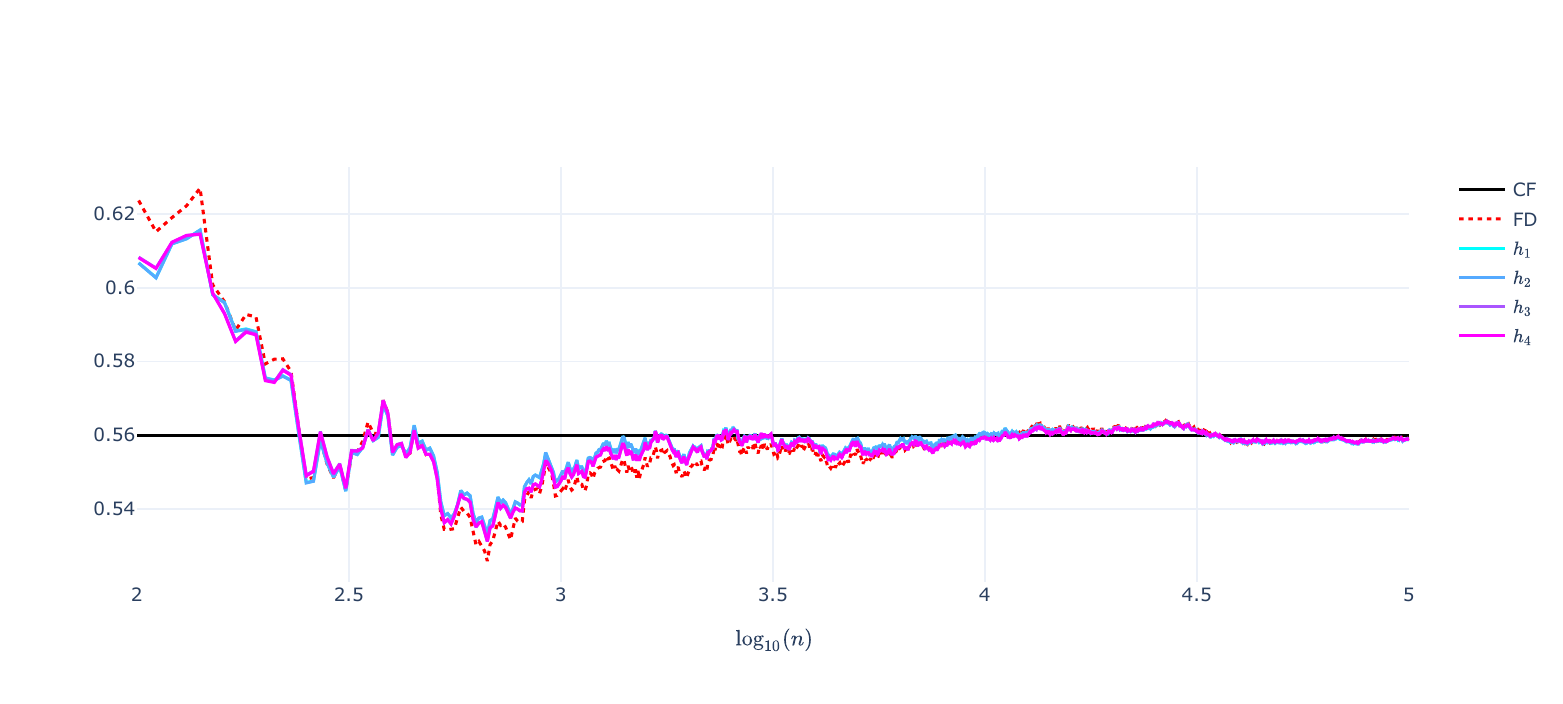}
        \caption{Delta of the vanilla ATM call option.}
    \end{subfigure}

    \vspace{1em}

    \begin{subfigure}[b]{1\textwidth}
        \centering
        \includegraphics[width=\textwidth]{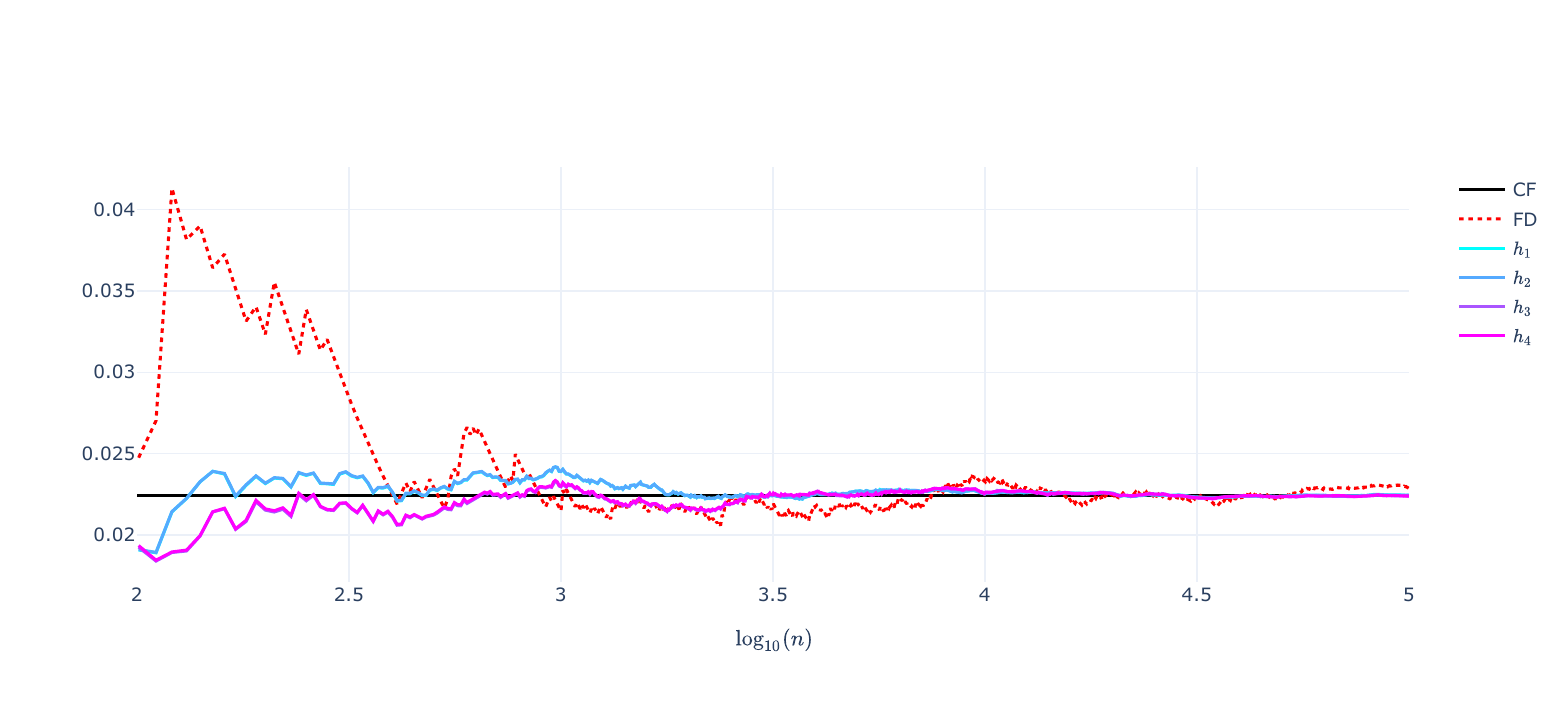}
        \caption{Delta of the digital ATM call option.}
    \end{subfigure}

    \caption{Convergence diagram for vanilla and digital ATM options in the stochastic volatility model \eqref{eq:ex_stoch_vol_eur}. The dotted red curve shows the results of the finite-difference method, while the four solid curves correspond to the Malliavin method with functions $h_1, h_2, h_3$, and $h_4$, respectively.}
    \label{fig:conv-diagram-europ-stochvol}
\end{figure}

\subsection{Asian options}
We now consider a genuinely path-dependent Asian option with payoff function $(S_t)_{t \in [0, T]} \mapsto f\left(\frac{1}{T}\int_0^T S_t\,dt\right)$, which depends on the average underlying price over $[0, T]$. Note that since $\log S_T = \bracketsigW[T]{\bell^X}$, we have $S_T = e^{\bracketsigW[T]{\bell^X}} = \bracketsigW{\exp^{\shuprod}(\bell^X)}$ and
\begin{equation}
    Y_T = \frac{1}{T}\int_0^T S_t\,dt = \bracketsigW[T]{\frac{1}{T}\exp^{\shuprod}({\bell^X})\otimes\word{0}},
\end{equation}
recall the definition of the shuffle exponential \eqref{eq:shuexp_def}.
We apply Theorem~\ref{thm:ibp_sig} to standard vanilla and digital payoff functions
$$
f_{\mathrm{vanilla}}(x) = (x - K)^+, \quad f_{\mathrm{digital}}(x) = \indic{x \geq K},
$$
with $K = S_0$ and the coefficients
$$
\bell^{G, \mathrm{Asian}} = \bell_1^F = \frac{1}{T}\exp^{\shuprod}({\bell^X})\otimes\word{0}, \quad \bell_2^F = S_0\emptyword. 
$$ 
Unlike the European option case, where a significant simplification of the Malliavin weight was possible for the functions $h_k$, $k = 1,2,3,4$, defined in the previous subsection, we now use the generic formula \eqref{eq:universal_weight_linearized} for all four functions.

\begin{figure}[H]
    \centering
    \begin{subfigure}[b]{1\textwidth}
        \centering
        \includegraphics[width=\textwidth]{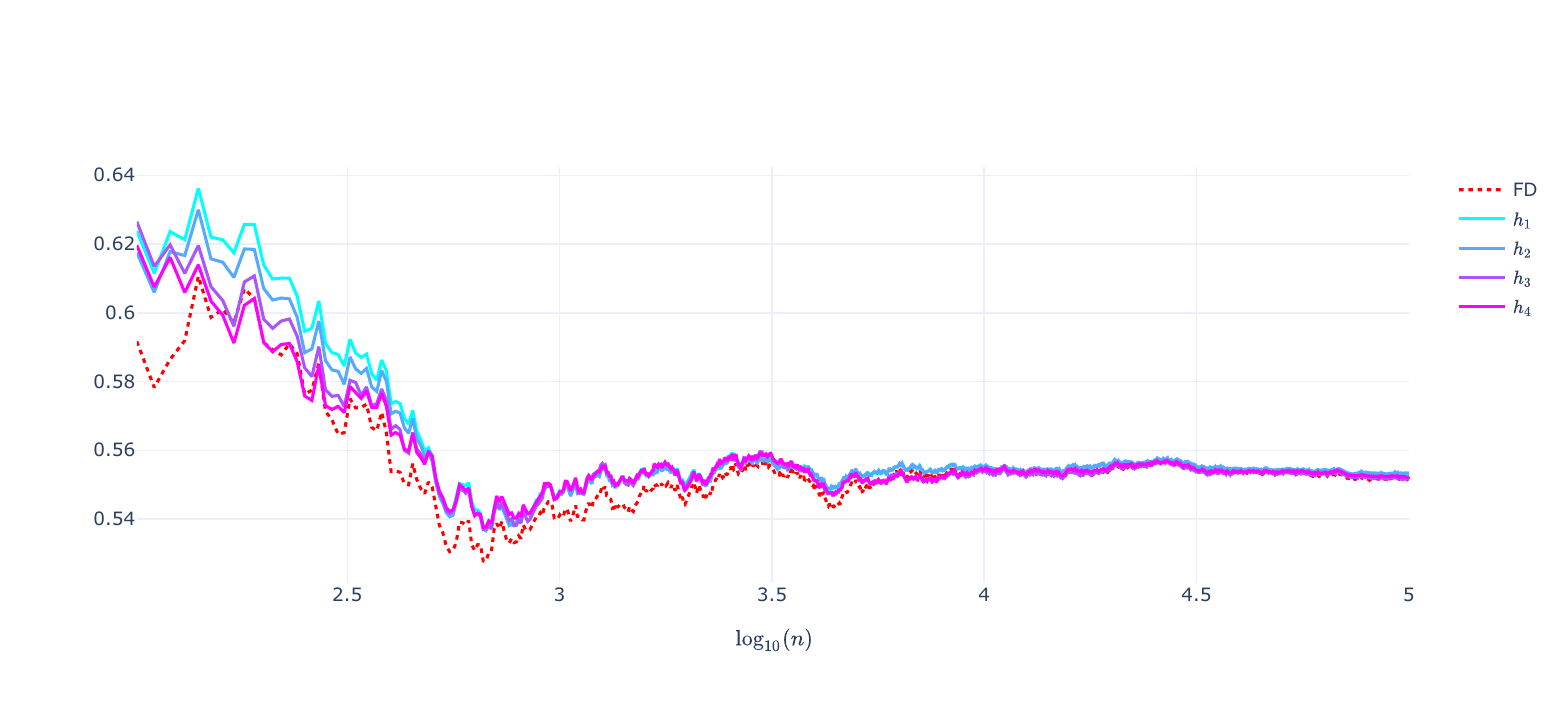}
        \caption{Delta of the Asian ATM call option.}
    \end{subfigure}

    \vspace{1em}

    \begin{subfigure}[b]{1\textwidth}
        \centering
        \includegraphics[width=\textwidth]{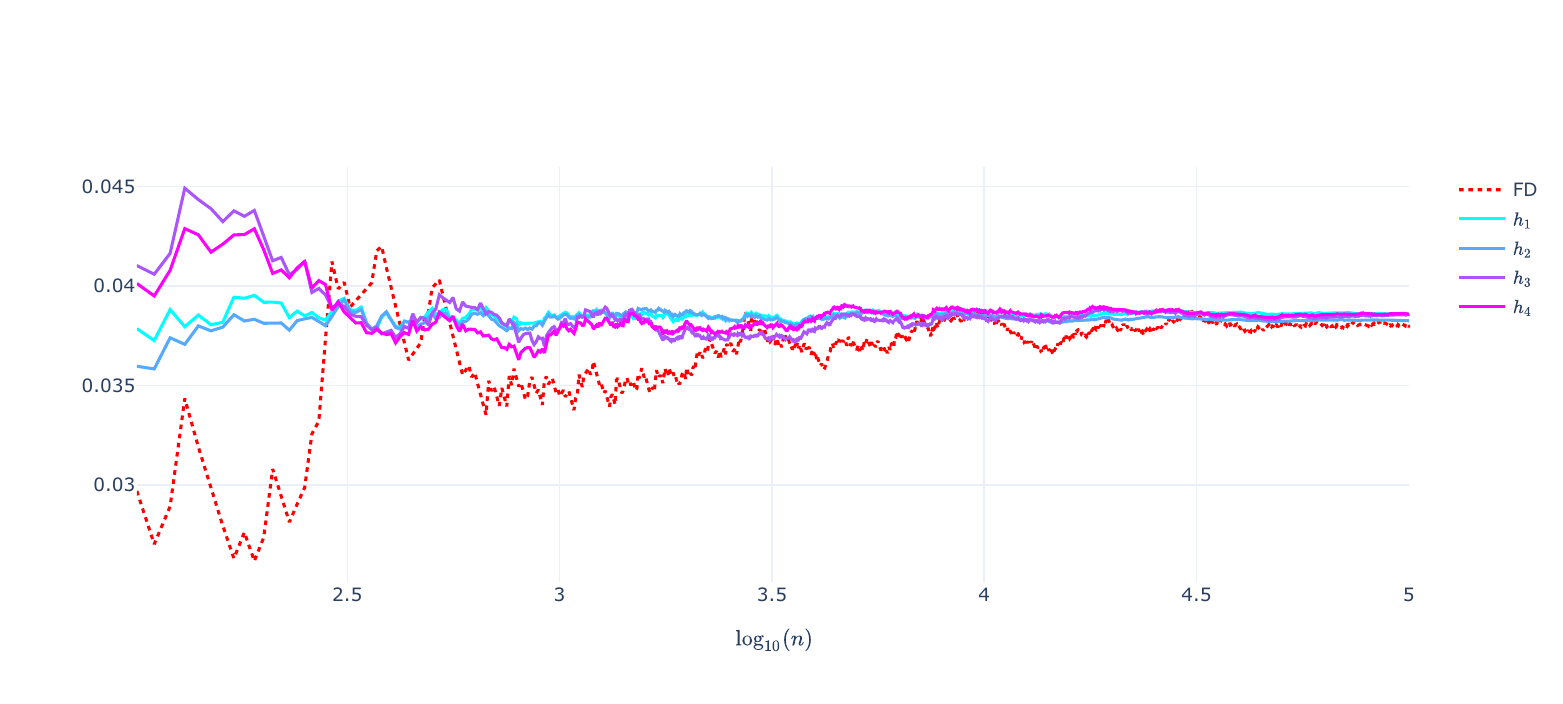}
        \caption{Delta of the digital Asian ATM call option.}
    \end{subfigure}

    \caption{Convergence diagram for vanilla and digital Asian ATM options in the stochastic volatility model \eqref{eq:ex_sto_vol_asian}. The dotted red curve shows the results of the finite-difference method, while the four solid curves correspond to the Malliavin method with functions $h_1, h_2, h_3$, and $h_4$, respectively.}
    \label{fig:asian}
\end{figure}

We consider the signature volatility model with parameters
\begin{equation}\label{eq:ex_sto_vol_asian}
    \bsigma = 0.25\cdot\emptyword + 0.1\cdot\word{1} + 0.05 \cdot \word{10}, \quad \rho = -0.9,
\end{equation}
truncated at order $N = 8$. 

The convergence diagrams are shown in Figure~\ref{fig:asian}. Again, we observe faster convergence for the discontinuous payoff, while for the vanilla Asian option the estimator remains close to the one given by the finite-difference approach.
We also note that $\bell^{G,\mathrm{Asian}}$ is an infinite sequence, so truncation error is inevitable, especially when performing two ``diamond'' products in the weight formula \eqref{eq:universal_weight_linearized}. Despite this, we observe that the Malliavin method provides sufficiently accurate results even with a relatively small truncation order, namely $N = 8$.

\appendix
 
\section{Malliavin differentiability of Stratonovich integrals}\label{sect:strato_diff}

For simplicity of notation, we present the proof of Theorem~\ref{thm:strato_mal_der} for the process
$$
X_t = \int_0^t a_s \circ dW_s,
$$
where $a$ satisfies \eqref{eq:norm_condition}--\eqref{eq:strato_reg_assump_second} and $W$ is a one-dimensional Brownian motion. The general case can be treated analogously.

The proof is an adaptation of the results in \cite[Section 3.1.1]{nualart2006malliavin} to the case of Stratonovich integrals considered as processes $\int_0^\cdot a_s \circ dW_s$, rather than random variables $\int_0^T a_s \circ dW_s$, and also establishes the existence of the derivative $(r,t) \mapsto \D_r \int_0^t a_s \circ dW_s$ as an element of $L^2(\Omega \times [0,T]^2)$. Our goal is therefore to show that
$$
X \in \mathbb{L}^{1,2}
\quad \text{and} \quad
\D_r X_t = a_r\mathds{1}_{[0, t]}(r) + \int_0^t \D_r a_s \circ dW_s.
$$

Recall that for a square-integrable process $h$, the Stratonovich integral $\int_0^T h_s \circ dW_s$, when it exists, is defined as the limit in probability of the Riemann sums
$$
S^\pi(h_\cdot) =
\sum_{j=0}^{n-1}
\dfrac{1}{t_{j+1}-t_j}
\left( \int_{t_j}^{t_{j+1}} h_s \, ds \right)
\left( W_{t_{j+1}} - W_{t_j} \right),
$$
along partitions $\pi = \{0 = t_0 < t_1 < \ldots < t_n = T\}$ as their diameter
$|\pi| := \sup_{j=0,\ldots,n-1} |t_{j+1}-t_j| \to 0$; see, e.g., \citet[Definition 3.1.1]{nualart2006malliavin}.

Let $(Y,\mathcal{Y},\nu)$ be a measurable space with a $\sigma$-finite atomless measure $\nu$, and let $h \in L^2(\Omega \times Y \times [0,T])$. In what follows, we will need to define the Stratonovich integrals $\int_0^T h(y,s)\circ dW_s$, depending on a parameter $y \in Y$, as elements of $L^2(\Omega \times Y)$.

We denote by $P^\pi h$ the stepwise approximation of $h$ corresponding to the partition $\pi$:
$$
P^\pi h(y,\cdot)
=
\sum_{j=0}^{n-1}
\dfrac{1}{t_{j+1}-t_j}
\int_{t_j}^{t_{j+1}} h(y,s)\, ds \;
\mathds{1}_{(t_j,t_{j+1}]}(\cdot),
$$
which, for almost every $(\omega,y) \in \Omega \times Y$, is the orthogonal projection of $h(y,\cdot)$ onto the space of step functions generated by $\{\mathds{1}_{(t_j,t_{j+1}]}\}_{j=0}^{n-1}$ associated with the partition $\pi$.

The following lemma extends \cite[Lemma 3.1.2]{nualart2006malliavin}.

\begin{lemma}\label{lem:L2_proj}
If $h \in L^2(\Omega \times Y \times [0,T])$, then
$$
\|P^\pi h - h\|_{L^2(\Omega \times Y \times [0,T])}
\xrightarrow{|\pi|\to 0} 0.
$$
\end{lemma}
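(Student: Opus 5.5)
The natural route is the standard density-plus-contraction argument for projections onto step functions, carried out in the product space $L^2(\Omega\times Y\times[0,T])$ instead of $L^2(\Omega\times[0,T])$. First, note that for fixed $(\omega,y)$ the map $P^\pi$ acts only in the time variable and is the orthogonal projection in $L^2([0,T])$ onto the step functions attached to $\pi$. It is therefore a contraction: $\|P^\pi g(\omega,y,\cdot)\|_{L^2([0,T])}\le \|g(\omega,y,\cdot)\|_{L^2([0,T])}$. This follows from Jensen or Cauchy--Schwarz applied on each interval $(t_j,t_{j+1}]$. Integrating over $\Omega\times Y$ with respect to $\P\otimes\nu$, Tonelli gives $\|P^\pi g\|_{L^2(\Omega\times Y\times[0,T])}\le\|g\|_{L^2(\Omega\times Y\times[0,T])}$ uniformly in $\pi$. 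We also need $P^\pi g$ to be jointly measurable, which holds because $\int_{t_j}^{t_{j+1}} g(\omega,y,s)\,ds$ is measurable in $(\omega,y)$ by Fubini.

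Second, take a dense class $\mathcal{C}$ of functions in $L^2(\Omega\times Y\times[0,T])$ on which the convergence is easy. A convenient choice is finite sums of products $\xi(\omega,y)\,\varphi(s)$, where $\xi\in L^2(\Omega\times Y)$ and $\varphi$ is continuous on $[0,T]$. This class is dense because $L^2(\Omega\times Y\times[0,T])\cong L^2(\Omega\times Y)\otimes L^2([0,T])$, continuous functions are dense in $L^2([0,T])$, and $\sigma$-finiteness of $\nu$ ensures that the product measure and Fubini behave as usual. For such a product, $P^\pi(\xi\varphi)=\xi\,P^\pi\varphi$. Since $\varphi$ is uniformly continuous, $\sup_s|P^\pi\varphi(s)-\varphi(s)|\le \omega_\varphi(|\pi|)\to0$, where $\omega_\varphi$ is the modulus of continuity of $\varphi$. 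Hence
\[
\|P^\pi(\xi\varphi)-\xi\varphi\|\le \|\xi\|_{L^2(\Omega\times Y)}\sqrt{T}\,\omega_\varphi(|\pi|)\to 0 .
\]
By linearity, the same convergence holds on all of $\mathcal{C}$.

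Finally, an $\varepsilon/3$ argument concludes. Given $h$ and $\varepsilon>0$, choose $g\in\mathcal{C}$ with $\|h-g\|<\varepsilon$. Then
\[
\|P^\pi h-h\|\le\|P^\pi(h-g)\|+\|P^\pi g-g\|+\|g-h\|\le 2\varepsilon+\|P^\pi g-g\|,
\]
and the last term tends to zero as $|\pi|\to0$.

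The only step needing care is the density and measurability bookkeeping in the product space. Everything else is a direct transcription of \cite[Lemma 3.1.2]{nualart2006malliavin}. An alternative that avoids the tensor-product density is dominated convergence in $(\omega,y)$. For a.e. $(\omega,y)$ the one-dimensional lemma gives $\|P^\pi h(\omega,y,\cdot)-h(\omega,y,\cdot)\|^2_{L^2([0,T])}\to0$. These quantities are dominated by $4\|h(\omega,y,\cdot)\|^2_{L^2([0,T])}$, which is integrable over $\Omega\times Y$, so dominated convergence gives the result along any sequence of partitions with mesh tending to zero. That suffices, since convergence as $|\pi|\to0$ is equivalent to convergence along all such sequences. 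I would present this dominated-convergence version as the main proof because it is shorter.
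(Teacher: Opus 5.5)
Your proposal is correct, and the dominated-convergence version you plan to present as the main proof is exactly the paper's argument: pointwise convergence in $L^2([0,T])$ for a.e.\ $(\omega,y)$, the bound $4\int_0^T |h(y,s)|^2\,ds$ from the contraction property of the projection, and dominated convergence over $\Omega\times Y$. Your remark that convergence along every sequence of partitions with vanishing mesh suffices makes explicit a rigor point the paper leaves implicit. Your density-plus-contraction alternative is also valid but not needed.
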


\begin{proof}
For almost every $(\omega,y) \in \Omega \times Y$, we have $h(y,\cdot) \in L^2([0,T])$, and by the density of step functions,
$$
\int_0^T |P^\pi h(y,s) - h(y,s)|^2 \, ds
\xrightarrow{|\pi| \to 0} 0.
$$
Moreover,
$$
\int_0^T |P^\pi h(y,s) - h(y,s)|^2 \, ds
\le
2 \int_0^T |P^\pi h(y,s)|^2 \, ds
+
2 \int_0^T |h(y,s)|^2 \, ds
\le
4 \int_0^T |h(y,s)|^2 \, ds,
$$
where we used the projection property
$\|P^\pi h(y,\cdot)\|_{L^2([0,T])}
\le
\|h(y,\cdot)\|_{L^2([0,T])}$.
Integrating this bound with respect to $\mathbb{P} \otimes \nu(dy)$ yields
$$
\|P^\pi h - h\|_{L^2(\Omega \times Y \times [0,T])}^2
=
\E\!\left[
\int_Y \int_0^T |P^\pi h(y,s) - h(y,s)|^2 \, ds \, \nu(dy)
\right]
\le
4 \|h\|_{L^2(\Omega \times Y \times [0,T])}^2.
$$
The dominated convergence theorem then implies
$$
\|P^\pi h - h\|_{L^2(\Omega \times Y \times [0,T])}^2
\xrightarrow{|\pi| \to 0} 0.
$$
\end{proof}

    \begin{lemma}\label{lem:Skorokhod_conv}
        If $h_n \to 0$ in $L^2(\Omega \times Y\times[0, T])$ and $\D h_n \to 0$ in $L^2(\Omega \times Y\times[0, T]^2)$ as $n \to \infty$, then
        $$
        \|\delta(s \mapsto h_n(\cdot, s))\|_{L^2(\Omega \times Y)} \xrightarrow{n \to \infty} 0,
        $$
        where we recall that $\delta$ is the Skorokhod integral defined in Subsection~\ref{sect:skorokhod_and_repr}.
    \end{lemma}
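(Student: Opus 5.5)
The approach is to prove the statement pointwise in the parameter $y$ and then integrate over $Y$, using the standard $L^2$ isometry-type bound for the Skorokhod integral. For $u \in \mathbb{L}^{1,2}$ (one-dimensional $W$), \citet[Proposition~1.3.1]{nualart2006malliavin} gives
\[
\E\big[\delta(u)^2\big] = \E\Big[\int_0^T u_s^2\,ds\Big] + \E\Big[\int_0^T\int_0^T \D_r u_s\,\D_s u_r\,dr\,ds\Big] \le \|u\|^2_{L^2(\Omega\times[0,T])} + \|\D u\|^2_{L^2(\Omega\times[0,T]^2)},
\]
where the inequality follows from Cauchy--Schwarz on the cross term, since $\int\!\int (\D_s u_r)^2\,dr\,ds = \int\!\int(\D_r u_s)^2\,dr\,ds$. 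The plan is to apply this with $u = h_n(y,\cdot)$ for fixed $y$.

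First I will check that, for $\nu$-a.e.\ $y$, the process $h_n(y,\cdot)$ lies in $\mathbb{L}^{1,2}$. By Fubini, the hypotheses give $\int_Y \big(\|h_n(y,\cdot)\|^2_{L^2(\Omega\times[0,T])} + \|\D h_n(y,\cdot)\|^2_{L^2(\Omega\times[0,T]^2)}\big)\,\nu(dy) < \infty$ for each $n$. Hence the integrand is finite for a.e.\ $y$, so $h_n(y,\cdot) \in \mathbb{L}^{1,2} \subset \mathrm{Dom}(\delta)$. This makes $\delta(h_n(y,\cdot))$ well defined for a.e.\ $y$.

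Next I will check joint measurability of $(\omega,y)\mapsto \delta(h_n(y,\cdot))(\omega)$, so that it defines an element of $L^2(\Omega\times Y)$. I expect this to be the main (technical) obstacle. I would handle it by approximation: take elementary processes $\sum_j F_j(y)\,\mathds{1}_{(t_j,t_{j+1}]}$ with $F_j$ smooth functionals depending measurably on $y$, for which $\delta$ is explicit by \eqref{eq:IBP_formula}, namely $\sum_j \big(F_j(W_{t_{j+1}}-W_{t_j}) - \int_{t_j}^{t_{j+1}} \D_s F_j\,ds\big)$, and which is therefore jointly measurable. These are dense in $L^2(Y;\mathbb{L}^{1,2})$, and the bound above together with Fubini shows that $\delta$ extends continuously from this dense class. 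The limit is then jointly measurable and agrees with the pointwise Skorokhod integral for a.e.\ $y$, by closedness of $\delta$.

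Finally, integrating the pointwise bound over $Y$ gives
\[
\|\delta(h_n)\|^2_{L^2(\Omega\times Y)} \le \|h_n\|^2_{L^2(\Omega\times Y\times[0,T])} + \|\D h_n\|^2_{L^2(\Omega\times Y\times[0,T]^2)} \xrightarrow{n\to\infty} 0,
\]
which concludes the proof.
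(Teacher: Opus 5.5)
Your proof is correct and follows the paper's argument: apply the bound $\E[\delta(u)^2]\le \E\|u\|^2_{L^2([0,T])}+\E\|\D u\|^2_{L^2([0,T]^2)}$ from Nualart's Proposition~1.3.1 to $u=h_n(y,\cdot)$, then integrate over $Y$ using Fubini. Your extra steps, checking that $h_n(y,\cdot)\in\mathbb{L}^{1,2}$ for a.e.\ $y$ and that $(\omega,y)\mapsto\delta(h_n(y,\cdot))$ is jointly measurable, cover points the paper passes over silently.
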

    \begin{proof}
        The proof relies on the standard $L^2$-bound for $\delta(u)$, where $u \in \mathbb{L}^{1, 2}$, that is,
        $$
        \E[\|u\|_{L^2([0, T])}^2] + \E[\|\D u\|_{L^2([0, T]^2)}^2] < \infty.
        $$
        In this case, the following bound holds
        \begin{equation}\label{eq:skorokhod_bound}
            \E[|\delta(u)|^2] \leq\E[\|u\|_{L^2([0, T])}^2] + \E[\|\D u\|_{L^2([0, T]^2)}^2].
        \end{equation}
        For the proof, we refer to \citep[Proposition 1.3.1]{nualart2006malliavin}.

        Applying \eqref{eq:skorokhod_bound} to $\delta(h_n(y, \cdot))$, integrating with respect to $\nu(dy)$ and applying Fubini's theorem, we obtain
        $$
        \|\delta(s \mapsto h_n(\cdot, s))\|_{L^2(\Omega \times Y)}^2 \leq \|h_n\|_{L^2(\Omega \times Y\times[0, T])}^2 + \|\D h_n\|_{L^2(\Omega \times Y\times[0, T]^2)}^2 \xrightarrow{n \to \infty} 0.
        $$
    \end{proof}

    \begin{lemma}\label{lem:L2_strato}
Suppose that $h \in L^2(\Omega \times Y \times [0,T])$ and $\D h \in L^2(\Omega \times Y \times [0,T]^2)$, and that there exist processes $\D^+ h, \D^- h \in L^2(\Omega \times Y \times [0,T])$ satisfying
\begin{equation}
\begin{aligned}\label{eq:D_plus_h_assump}
\lim_{n \to \infty} \int_Y \int_0^T
\sup_{\substack{s < u < T \\ |u-s| \le 1/n}}
\E\!\left[ \big| \D_s h(y,u) - (\D^{+} h)(y,s) \big|^2 \right]
ds\, \nu(dy) &= 0, \\
\lim_{n \to \infty} \int_Y \int_0^T
\sup_{\substack{0 < u < s \\ |s-u| \le 1/n}}
\E\!\left[ \big| \D_s h(y,u) - (\D^{-} h)(y,s) \big|^2 \right]
ds\, \nu(dy) &= 0.
\end{aligned}
\end{equation}
Then, the Stratonovich integral $\int_0^T h(y,s)\circ dW_s$ is well-defined in $L^2(\Omega \times Y)$ as the limit of the Riemann sums $S^\pi(s \mapsto h(\cdot,s))$ as $|\pi| \to 0$.
\end{lemma}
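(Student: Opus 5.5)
We follow the proof of \citet[Theorem~3.1.1]{nualart2006malliavin}, carrying the parameter $y$ along and measuring every error in $L^2(\Omega\times Y)$ instead of $L^2(\Omega)$. Fix a partition $\pi$. Since $P^\pi h(y,\cdot)$ is a step process, the integration-by-parts formula \eqref{eq:IBP_formula} applies on each interval $(t_j,t_{j+1}]$ with $F=\frac{1}{t_{j+1}-t_j}\int_{t_j}^{t_{j+1}}h(y,s)\,ds$ and $h=\mathds 1_{(t_j,t_{j+1}]}$. Summing over $j$ gives the decomposition
\begin{equation*}
S^\pi(h(y,\cdot)) = \delta\big(P^\pi h(y,\cdot)\big) + \sum_{j=0}^{n-1}\frac{1}{t_{j+1}-t_j}\int_{t_j}^{t_{j+1}}\!\!\int_{t_j}^{t_{j+1}} \D_s h(y,u)\,du\,ds .
\end{equation*}
It therefore suffices to show that each of the two terms converges in $L^2(\Omega\times Y)$.

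\textbf{Skorokhod term.} By Lemma~\ref{lem:L2_proj} applied to $h$ we get $P^\pi h\to h$ in $L^2(\Omega\times Y\times[0,T])$. Applied to $\D h$, viewed as an element of $L^2(\Omega\times(Y\times[0,T])\times[0,T])$ with the extra variable absorbed into the parameter space, it gives $\D P^\pi h = P^\pi \D h\to \D h$. Here we use that $\D$ commutes with averaging in time. Lemma~\ref{lem:Skorokhod_conv}, applied to $h_n=P^{\pi_n}h-h$, then yields $\delta(P^{\pi}h(y,\cdot))\to\delta(h(y,\cdot))$ in $L^2(\Omega\times Y)$.

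\textbf{Trace term.} We split each square $[t_j,t_{j+1}]^2$ into the regions $u>s$ and $u<s$, and aim to show the sum converges to $\frac12\int_0^T\big((\D^+h)(y,s)+(\D^-h)(y,s)\big)ds$. On the region $u>s$ we subtract $(\D^+h)(y,s)$. The integral of $\D^+h$ over the triangle is $\int_{t_j}^{t_{j+1}}(\D^+h)(y,s)\,\frac{t_{j+1}-s}{t_{j+1}-t_j}\,ds$. Pairing it with the symmetric $\D^-$ contribution, whose weight is $\frac{s-t_j}{t_{j+1}-t_j}$, does not give the limit directly, because each weight alone is not $\tfrac12$. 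We would handle this by also approximating $\D^+h$ and $\D^-h$ by their projections $P^\pi\D^\pm h$, which converge by Lemma~\ref{lem:L2_proj}. For step functions, the triangle weights integrate exactly to $\tfrac12$ over each interval, which produces the limit; the replacement error is bounded using Cauchy--Schwarz, since the weights lie in $[0,1]$. The remaining error is
\begin{equation*}
\sum_j\frac{1}{t_{j+1}-t_j}\int_{t_j}^{t_{j+1}}\int_s^{t_{j+1}}\big(\D_sh(y,u)-(\D^+h)(y,s)\big)\,du\,ds .
\end{equation*}
We bound it in $L^2(\Omega\times Y)$ by Cauchy--Schwarz and Minkowski's integral inequality by
\begin{equation*}
\sqrt{T}\Big(\int_Y\int_0^T\sup_{s<u<s+|\pi|}\E|\D_sh(y,u)-(\D^+h)(y,s)|^2\,ds\,\nu(dy)\Big)^{1/2},
\end{equation*}
which tends to zero by \eqref{eq:D_plus_h_assump}. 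The region $u<s$ is treated in the same way with $\D^-h$.

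\textbf{Expected main obstacle.} The delicate step is the trace term. The difficulty is obtaining a bound that is uniform in the partition while respecting the supremum structure of \eqref{eq:D_plus_h_assump}. A second difficulty is the asymmetric triangle weights, which we plan to control by the step-function approximation of $\D^\pm h$ described above. Measurability in $y$ of all objects involved follows from Fubini's theorem and should be routine. Combining both convergences gives the limit $\delta(h(y,\cdot))+\frac12\int_0^T(\D^+h+\D^-h)(y,s)\,ds$ in $L^2(\Omega\times Y)$, which identifies the Stratonovich integral.
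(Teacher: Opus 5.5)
Your proposal is correct and follows the paper's proof: the same decomposition of $S^\pi$ into $\delta(P^\pi h(y,\cdot))$ plus a trace term, convergence of the Skorokhod part via Lemmas~\ref{lem:L2_proj} and~\ref{lem:Skorokhod_conv} (with $\D P^\pi h = P^\pi \D h$), and the trace term handled as in Nualart's Theorem~3.1.1 with $L^2(\Omega\times Y)$ norms. The paper only cites Nualart for the trace term, whereas you carry it out explicitly. Your replacement of $\D^\pm h$ by $P^\pi \D^\pm h$, whose triangle weights integrate to exactly $\tfrac12$ on each interval, correctly handles the asymmetric weights.
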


\begin{proof}
We first note that the Riemann sums can be rewritten as 
\begin{equation}
\begin{aligned}\label{eq:riem_sums_l2}
S^\pi(s \mapsto h(y,s))
&= \sum_{j=0}^{n-1}
\dfrac{1}{t_{j+1}-t_j}
\left( \int_{t_j}^{t_{j+1}} h(y,s)\, ds \right)
(W_{t_{j+1}} - W_{t_j}) \\
&= \delta(P^\pi h(y,\cdot))
+ \sum_{j=0}^{n-1}
\dfrac{1}{t_{j+1}-t_j}
\int_{t_j}^{t_{j+1}} \int_{t_j}^{t_{j+1}}
\D_t h(y,s)\, ds\, dt.
\end{aligned}
\end{equation}
The final equality follows from the fact that $P^\pi h(y, \cdot)$ is a step function with random coefficients, in conjunction with the formula \eqref{eq:IBP_formula}.

By Lemma~\ref{lem:L2_proj}, the process $s \mapsto P^\pi h(\cdot,s)$ converges to $h$ in $L^2(\Omega \times Y \times [0,T])$, and $\D P^\pi h$ converges to $\D h$ in $L^2(\Omega \times Y \times [0,T]^2)$. It then follows from Lemma~\ref{lem:Skorokhod_conv} that the Skorokhod integral term in \eqref{eq:riem_sums_l2} converges to $\delta(h(y,\cdot))$ in $L^2(\Omega \times Y)$.

The convergence of the second term in \eqref{eq:riem_sums_l2} to
\[
\frac12 \int_0^T (\D^+ h)(y,s)\, ds
+
\frac12 \int_0^T (\D^- h)(y,s)\, ds
\]
in $L^2(\Omega \times Y)$ is obtained by repeating the proof of \cite[Theorem~3.1.1]{nualart2006malliavin}, replacing the $L^1(\Omega)$ norms with $L^2(\Omega \times Y)$ norms.
\end{proof}

\subsection{Proof of Theorem~\ref{thm:strato_mal_der}}

\paragraph{Step 1. Well-posedness of the Stratonovich integral.}

To show that the Stratonovich integral
$$
t \mapsto \int_0^t a_s \circ dW_s
$$
is well-defined as an element of $L^2(\Omega \times [0,T])$, we define the process $h(t,s) = a_s \mathds{1}_{[0,t]}(s)$ and verify the assumptions of Lemma~\ref{lem:L2_strato} with $Y = [0,T]$ and $\nu(dy) = dy$. Clearly,
$$
\|h\|_{L^2(\Omega \times [0,T]^2)}^2
=
\E\!\left[
\int_0^T \int_0^T
|a_s \mathds{1}_{[0,t]}(s)|^2 \, ds\, dt
\right]
\le
T \, \E\!\left[\int_0^T |a_s|^2 \, ds \right]
< \infty,
$$
and
$$
\|\D h\|_{L^2(\Omega \times [0,T]^3)}^2
=
\E\!\left[
\int_0^T \int_0^T \int_0^T
|\D_r a_s \mathds{1}_{[0,t]}(s)|^2
\, ds\, dt\, dr
\right]
\le
T \, \E\!\left[
\int_0^T \int_0^T |\D_r a_s|^2 \, ds\, dr
\right]
< \infty,
$$
by \eqref{eq:norm_condition}.

We set $(\D^+ h)(t,s) := (\D^+ a)_s \mathds{1}_{[0,t]}(s)$. Condition \eqref{eq:D_plus_h_assump} becomes
\begin{align*}
\lim_{n \to \infty}
&\int_0^T \int_0^T
\sup_{\substack{s < u < T \\ |u-s| \le 1/n}}
\E\!\left[
\big| \D_s h(t,u) - (\D^{+} h)(t,s) \big|^2
\right] ds\, dt \\
&=
\lim_{n \to \infty}
\int_0^T \int_0^T
\sup_{\substack{s < u < T \\ |u-s| \le 1/n}}
\E\!\left[
\big| \D_s a_u \mathds{1}_{[0,t]}(u)
- (\D^+ a)_s \mathds{1}_{[0,t]}(s)
\big|^2
\right] ds\, dt \\
&\le
\lim_{n \to \infty}
\left(
T \int_0^T
\sup_{\substack{s < u < T \\ |u-s| \le 1/n}}
\E\!\left[
\big| \D_s a_u - (\D^+ a)_s \big|^2
\right] ds
+
\frac{1}{n}
\int_0^T \E[|(\D^+ a)_s|^2]\, ds
\right)
= 0,
\end{align*}
by \eqref{eq:strato_reg_assump}. The condition for $(\D^- h)(t,s) := (\D^- a)_s \mathds{1}_{[0,t]}(s)$ is verified similarly. Lemma~\ref{lem:L2_strato} therefore yields convergence of the Riemann sums $S^\pi(s \mapsto h(\cdot,s))$ to the Stratonovich integral in $L^2(\Omega \times [0,T])$.
    
 \paragraph{Step 2. Malliavin derivative of the Stratonovich integral.}
We next show that the $L^2(\Omega \times [0,T]^2)$ limit of $\D S^\pi(s \mapsto h(\cdot,s))$ exists and is equal to
\begin{equation}\label{eq:D_S_pi_limit}
\lim_{|\pi| \to 0}
\D S^\pi(s \mapsto h(\cdot,s))
=
a \mathds{1}_{[0,\cdot]}
+
\int_0^T \D a_s \mathds{1}_{[0,\cdot]}(s)\circ dW_s.
\end{equation}
By the closability of the operator
$\D \colon L^2(\Omega \times [0,T]) \to L^2(\Omega \times [0,T]^2)$,
this implies
$$
\D_r \int_0^t a_s \circ dW_s
=
a_r \mathds{1}_{[0,t]}(r)
+
\int_0^t \D_r a_s \circ dW_s,
$$
for a.e.\ $(r,t) \in [0,T]^2$, and consequently
$\int_0^\cdot a_s \circ dW_s \in \mathbb{L}^{1,2}$.

To prove \eqref{eq:D_S_pi_limit}, we compute
\begin{equation}
\begin{aligned}\label{eq:D_S_pi}
\D_r S_t^\pi(s \mapsto h(t,s))
&=
\sum_{j=0}^{n-1}
\dfrac{1}{t_{j+1}-t_j}
\left(
\int_{t_j}^{t_{j+1}} a_s \mathds{1}_{[0,t]}(s)\, ds
\right)
\mathds{1}_{(t_j,t_{j+1}]}(r)
\\
&\quad +
\sum_{j=0}^{n-1}
\dfrac{1}{t_{j+1}-t_j}
\left(
\int_{t_j}^{t_{j+1}} \D_r a_s \mathds{1}_{[0,t]}(s)\, ds
\right)
(W_{t_{j+1}} - W_{t_j}).
\end{aligned}
\end{equation}

By Lemma~\ref{lem:L2_proj}, the first term converges to
$a_r \mathds{1}_{[0,t]}(r)$ in $L^2(\Omega \times [0,T]^2)$ as $|\pi| \to 0$.

The second term in \eqref{eq:D_S_pi} is a Riemann sum corresponding to the process
$\tilde{h} \in L^2(\Omega \times [0,T]^3)$ defined by
$\tilde{h}(r,t,s) = \D_r a_s \mathds{1}_{[0,t]}(s)$.
It follows from \eqref{eq:norm_condition} that
$$
\|\tilde{h}\|_{L^2(\Omega \times [0,T]^3)}^2 < \infty,
\qquad
\|\D \tilde{h}\|_{L^2(\Omega \times [0,T]^4)}^2 < \infty.
$$

Define
$(\D^+ \tilde h)(r,t,s) := \D_r(\D^+ a)_s \mathds{1}_{[0,t]}(s)$
and
$(\D^- \tilde h)(r,t,s) := \D_r(\D^- a)_s \mathds{1}_{[0,t]}(s)$.
Using the same argument as in Step~1 together with
\eqref{eq:strato_reg_assump_second}, we verify condition
\eqref{eq:D_plus_h_assump} for $\tilde h$.
Applying Lemma~\ref{lem:L2_strato} then yields convergence in
$L^2(\Omega \times [0,T]^2)$ of the second term in \eqref{eq:D_S_pi}
to
$\int_0^t \D_r a_s \circ dW_s$.
This completes the proof.
    
\section{Greeks via the characteristic function}\label{sect:char_func}

If the following two conditions are satisfied:
\begin{enumerate}
    \item the model admits a computable characteristic function $\phi(u) := \E\left[e^{iu\log\frac{S_T}{S_0}}\right],\ u\in\R$;
    \item for a given payoff $f$, a Fourier inversion formula is available,
\end{enumerate}
then one can obtain a quasi-analytic Fourier representation for Greeks with respect to $S_0$, computable via numerical integration without the need for Monte Carlo simulation. The characteristic function $\phi(u)$ in the signature volatility model can be computed by solving an infinite-dimensional Riccati equation; see \citep*[Section~5]{abijaber2024signature}.

For example, for a vanilla call option, Lewis’ formula yields
\begin{equation}\label{eq:lewis_vanilla_call}
    C(T, K) = \E[(S_T - K)^+] = S_0 - \frac{K}{\pi}\int_0^\infty \Re\left(e^{i\left(u - \frac{i}{2}\right)\log\frac{S_0}{K}}\phi\!\left(u - \frac{i}{2}\right)\right)\frac{d u}{u^2 + \frac{1}{4}},
\end{equation}
so that the option Delta is given by formal differentiation with respect to $S_0$:
$$
\Delta(T, K) = 1 - \frac{1}{\pi}\frac{K}{S_0}\int_0^\infty \Re\left(i\left(u - \frac{i}{2}\right)e^{i\left(u - \frac{i}{2}\right)\log\frac{S_0}{K}}\phi\!\left(u - \frac{i}{2}\right)\right)\frac{d u}{u^2 + \frac{1}{4}},
$$
which can be computed much faster than Monte Carlo methods.

Similarly, using the relationship $C_{\mathrm{digital}}(T, K) = -\partial_K C(T, K)$ between digital and vanilla call options, one obtains a Fourier representation for the digital option price by differentiating \eqref{eq:lewis_vanilla_call} with respect to $K$:
\begin{equation}
     C_{\mathrm{digital}}(T, K) = \E[\indic{S_T \ge K}] = \frac{1}{\pi}\int_0^\infty \Re\left(\left(\frac{1}{2} - iu\right)e^{i\left(u - \frac{i}{2}\right)\log\frac{S_0}{K}}\phi\!\left(u - \frac{i}{2}\right)\right)\frac{d u}{u^2 + \frac{1}{4}}.
\end{equation}
Taking the derivative with respect to $S_0$, we obtain the Delta of the digital option:
\begin{equation}
     \Delta_{\mathrm{digital}}(T, K) = \frac{1}{\pi S_0}\int_0^\infty \Re\left(e^{i\left(u - \frac{i}{2}\right)\log\frac{S_0}{K}}\phi\!\left(u - \frac{i}{2}\right)\right)\, d u.
\end{equation}

This approach is rather formal, since we do not justify the differentiation of the integrals with respect to parameters, and no rigorous result concerning the existence of solutions to the infinite-dimensional Riccati equations is currently available. Hence, we emphasize that the Greeks obtained by this method are used only as sanity checks to demonstrate consistency with other methods.

\section{Numerical instabilities in the Malliavin weights}\label{sect:instabilities}

We consider the simplest possible stochastic volatility model,
\begin{equation}\label{eq:model_instab}
    \bsigma = 0.02\cdot\emptyword + 0.05\cdot \word{2}, \qquad \rho = -0.95,
\end{equation}
with signature truncation order $N = 7$. Equivalently, the volatility is an affine function of Brownian motion, $\sigma_t = 0.02 + 0.05\, W_t$. 
Even in this elementary setting, the denominators $\bracket{\D G}{h}$ appearing in the Malliavin weights of Theorem~\ref{thm:ibp_sig} may vanish, causing the method to become unstable for $h = h_1$ and $h = h_3$ from Table~\ref{tab:table_weights}. Figure~\ref{fig:instab_hist} shows that the support of $\bracket{\D G}{h}$ is contained in $\mathbb{R}^+$ for $h_2$ and $h_4$, whereas for $h_1$ and $h_3$, zero lies in the interior of the support. As a consequence, the corresponding Greek estimators exhibit large jumps, as seen in Figure~\ref{fig:instab_diag}, ultimately leading to divergence of the Monte Carlo estimator.

\begin{figure}[H]
    \centering
        \includegraphics[width=\textwidth]{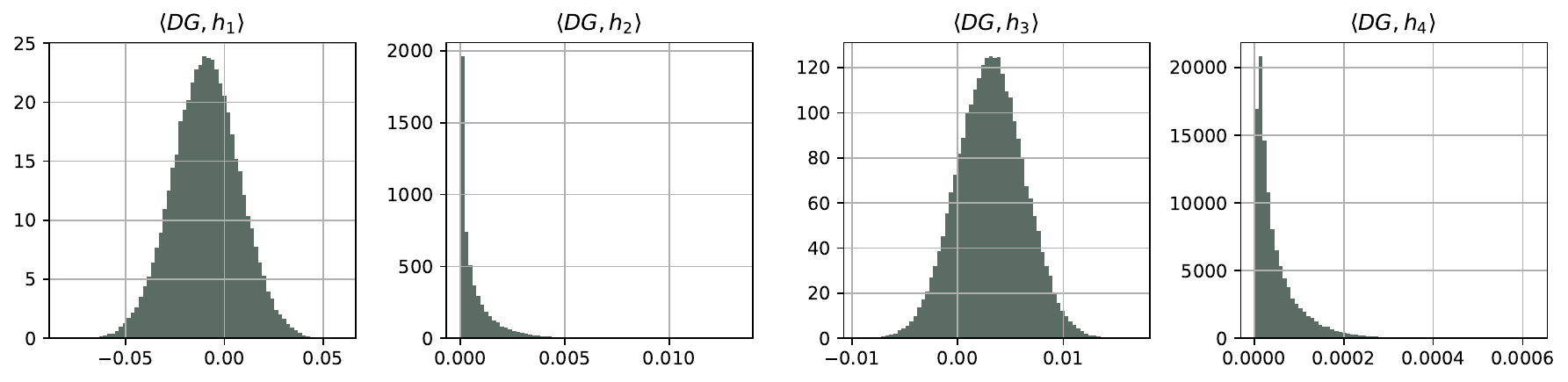}
    \caption{Empirical distribution of \(\bracket{\D G}{h}\) for the functions $h_k,\, k = 1,2,3, 4,$ listed in Table~\ref{tab:table_weights}.}
    \label{fig:instab_hist}
\end{figure}

\begin{figure}[H]
    \centering
        \includegraphics[width=\textwidth]{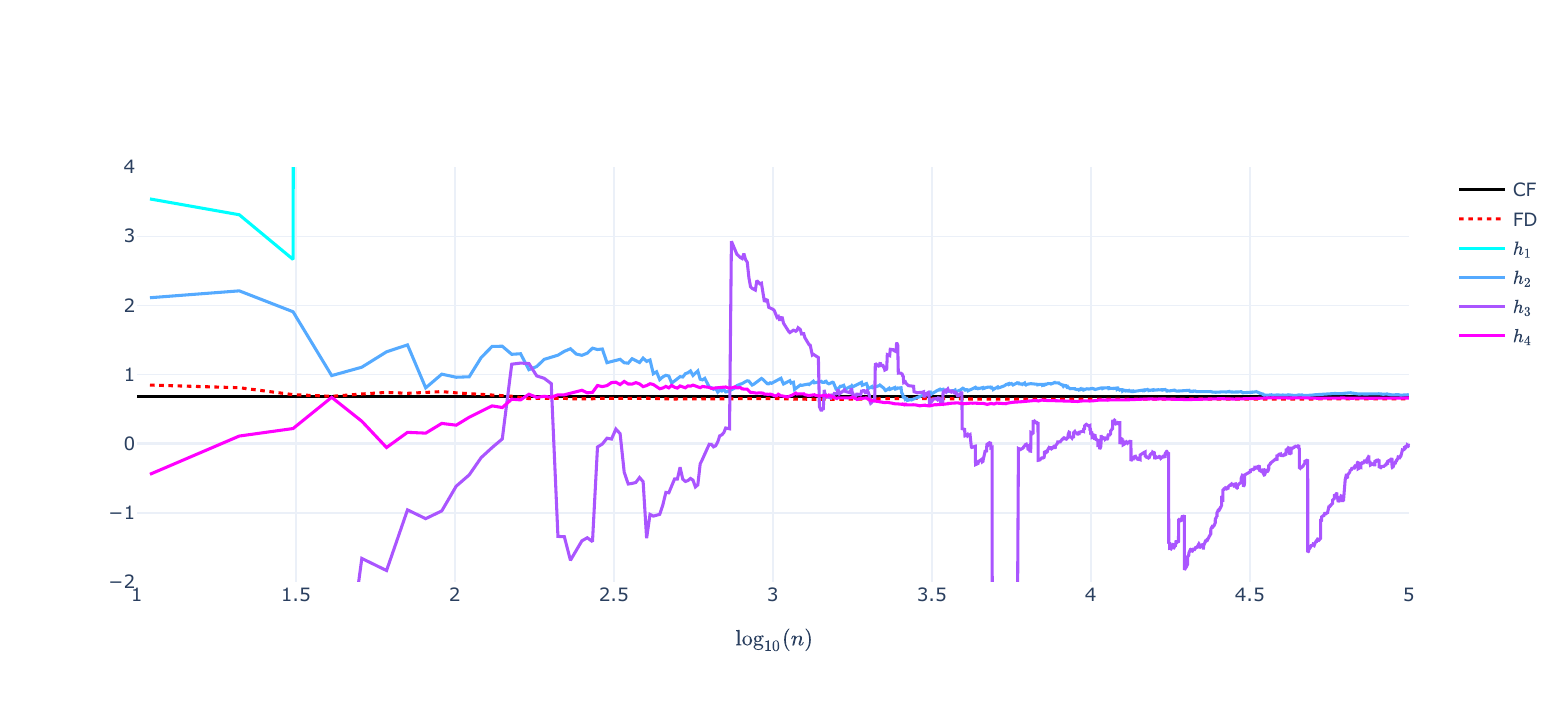}
    \caption{``Convergence'' diagram for vanilla ATM options in the stochastic volatility model \eqref{eq:model_instab}}
    \label{fig:instab_diag}
\end{figure}
\bibliographystyle{plainnat}
\bibliography{bibl.bib}

\end{document}